\newcommand{\R}{\mathbb{R}}
\newcommand{\ps}[2]{\left\langle#1\middle\vert#2\right\rangle}
\newcommand{\pps}[2]{\langle\left\langle#1\middle\vert#2\right\rangle\rangle}
\newcommand{\psp}{\ps{\cdot}{\cdot}}
\newcommand{\ppsp}{\pps{\cdot}{\cdot}}
\newcommand{\pst}[3]{#2\ton{#1,#3}}
\newcommand{\ton}[1]{\left(#1\right)}
\newcommand{\qua}[1]{\left[#1\right]}
\newcommand{\cur}[1]{\left\{#1\right\}}
\newcommand{\abs}[1]{\left|#1\right|}
\newcommand{\dive}[1]{\operatorname{div}\ton{#1}}
\newcommand{\Ric}{\operatorname{Ric}}
\newcommand{\V}{\operatorname{Vol}}
\renewcommand{\L}{\operatorname{L}}
\newcommand{\pdw}{\dot w ^{(p-1)}}
\newcommand{\pw}{w^{(p-1)}}
\newcommand{\pG}{G^{(p-1 )}}
\newcommand{\pu}{ u^{(p-1)}}
\newcommand{\pf}{Pr\"{u}fer }
\newcommand{\sinp}{\operatorname{sin_p}}
\newcommand{\cosp}{\operatorname{cos_p}}
\newtheorem{prop}{Proposition}[section]
\newtheorem{teo}[prop]{Theorem}
\newtheorem{deph}[prop]{Definition}
\newtheorem{lemma}[prop]{Lemma}
\newtheorem{rem}[prop]{Remark}
\newtheorem{cor}[prop]{Corollary}
\numberwithin{equation}{section}
\begin{document}


\baselineskip=17pt


\title[Sharp estimate on the first eigenvalue of the p-Laplacian]{Sharp estimate on the first eigenvalue of the p-Laplacian on compact manifold with nonnegative Ricci curvature}

\author[Daniele Valtorta]{Daniele Valtorta}

\date{01/02/2012, e-mail: \href{mailto:danielevaltorta@gmail.com}{danielevaltorta@gmail.com}}

\begin{abstract}
 We prove the sharp estimate on the first nonzero eigenvalue of the p-Laplacian on a compact Riemannian manifold with nonnegative Ricci curvature and possibly with convex boundary (in this case we assume Neumann b.c. on the p-Laplacian). The proof is based on a gradient comparison theorem. We will also characterize the equality case in the estimate.
\end{abstract}

\keywords{first eigenvalue, p-Laplacian, nonnegative Ricci curvature, convex boundary}
\subjclass[2010]{Primary 53C21, Secondary 53C24}

\maketitle

\tableofcontents

This article is a preprint of the article published on \href{http://www.sciencedirect.com/science/article/pii/S0362546X12001411}{Nonlinear Analysis}: Theory, Methods \& Applications. Volume 75, Issue 13, September 2012, Pages 4974–4994. Although there are differences with the published version, the content of this preprint is substantially the same.

\section{Introduction}
Let $M$ be a compact Riemannian manifold with nonnegative Ricci curvature and possibly with convex boundary, and let $d$ be its diameter. For a function $u\in W^{1,p}(M)$, we define its $p$-Laplacian as
\begin{gather}
  \Delta_p(u)\equiv \operatorname{div}(\abs{\nabla u}^{p-2} \nabla u)\ ,
\end{gather}
where the equality is in the weak $W^{1,p}(M)$ sense. We will denote the first positive eigenvalue of this operator as $\lambda_p$, assuming Neumann boundary conditions on the boundary if necessary. In particular, $\lambda_p$ is the smallest positive real number such that there exists a nonzero $u\in W^{1,p}(M)$ satisfying in the weak sense
\begin{gather}\label{eq_plap}
\begin{cases}
 \Delta_p(u)= -\lambda_p \abs u ^{p-2} u \ \ \ \ \ &on \ M\\
 \ps{\nabla u}{\hat n}=0 \ \ \  \ \ \ \ &on \ \partial M \ .
\end{cases}
\end{gather}
In this article, we will prove the following sharp estimate:
\begin{gather}\label{eq_1}
 \frac{\lambda_p }{p-1}\geq \frac{\pi_p^p}{d^p}                              \ ,
\end{gather}
where $$\pi_p =\int_{-1}^1 \frac{ds}{(1-\abs s ^p)^{1/p}}=\frac{2\pi}{p\sin(\pi/p)} \ .$$
Moreover, we will also prove that equality in this estimate can occur if and only if $M$ is a one dimensional circle or a segment.

In the case where $p=2$, this problem has been intensively studied, in particular in \cite{ZY} the sharp estimate
\begin{gather*}
 \lambda_2 \geq \frac{\pi^2}{d^2}
\end{gather*}
is obtained assuming that $M$ has nonnegative Ricci curvature.

The main tool used in this article is a gradient estimate for the function $u$, technique which was used by P. Li and S. T. Yau to get eigenvalue estimates for the usual Laplacian (see \cite{LY} and also \cite{SYred}). They were able to prove that:
\begin{lemma}
 On a compact manifold $M$ with nonnegative Ricci curvature, if a function $u$ is such that $\Delta u = -\lambda_2 u$ and $\abs u \leq 1$, the following estimate is valid where $u\neq \pm1$
\begin{gather*}
 \frac{\abs{\nabla u}^2}{1-u^2}\leq \lambda_2   \ .
\end{gather*}
Note that where $u=\pm 1$, $\nabla u=0$.
\begin{proof}[{Sketch of the proof}]
 The proof is based on a very common argument: consider the function $F\equiv \frac{\abs{\nabla u}^2}{1+\epsilon-u^2}$ on the manifold $M$. Necessarily $F$ attains a maximum, and at this point $\nabla F=0$ and $\Delta F\leq 0$. From these two relations, one proves that $F\leq \lambda_2$.
\end{proof}
\end{lemma}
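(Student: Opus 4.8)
The plan is to spell out the classical Bochner plus maximum-principle argument only sketched above. Since $\Delta u=-\lambda_2 u$, elliptic regularity makes $u$ smooth. Fix $\epsilon>0$, set $g\equiv 1+\epsilon-u^2$ (so $g>0$ because $\abs u\leq 1$), and consider $F\equiv \abs{\nabla u}^2/g$, a smooth function on the compact manifold $M$. Let $x_0$ be a point where $F$ attains its maximum. If $\nabla u(x_0)=0$ then $F\leq F(x_0)=0$ and there is nothing to prove, so assume $\nabla u(x_0)\neq 0$; then $\nabla F(x_0)=0$ and $\Delta F(x_0)\leq 0$.

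First I would exploit the first-order condition. Using $\nabla g=-2u\,\nabla u$ and $\nabla\abs{\nabla u}^2=2\,\nabla^2 u(\nabla u,\cdot)$, the identity $g\,\nabla\abs{\nabla u}^2=\abs{\nabla u}^2\,\nabla g$ at $x_0$ becomes $\nabla^2 u(\nabla u,\cdot)=-uF\,\nabla u$, so $\nabla u$ is an eigenvector of the Hessian there. Choosing an orthonormal frame with $e_1=\nabla u/\abs{\nabla u}$ gives $(\nabla^2 u)_{11}=-uF$, hence the lower bound $\abs{\nabla^2 u}^2\geq u^2F^2$ at $x_0$; this refinement of Cauchy--Schwarz, rather than the crude $\abs{\nabla^2 u}^2\geq(\Delta u)^2/n$, is what lets the argument close.

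Next I would compute $\Delta F$: a direct computation, using $\nabla F=0$ to cancel the mixed term $2\ps{\nabla\abs{\nabla u}^2}{\nabla g}/g^2$ against $2\abs{\nabla u}^2\abs{\nabla g}^2/g^3$, gives $g\,\Delta F=\Delta\abs{\nabla u}^2-F\,\Delta g$ at $x_0$. Bochner's formula together with $\Ric\geq 0$ and $\Delta u=-\lambda_2 u$ yields $\Delta\abs{\nabla u}^2\geq 2\abs{\nabla^2 u}^2-2\lambda_2\abs{\nabla u}^2$, while $\Delta g=-2\abs{\nabla u}^2+2\lambda_2 u^2$. Plugging these into $\Delta F(x_0)\leq 0$, substituting $\abs{\nabla u}^2=Fg$ and $u^2=1+\epsilon-g$, and using $\abs{\nabla^2 u}^2\geq u^2F^2$, the $u^2F^2$ contributions cancel and one is left with $F(x_0)^2\leq\lambda_2\,F(x_0)$, i.e. $F(x_0)\leq\lambda_2$. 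Hence $F\leq\lambda_2$ on all of $M$; letting $\epsilon\to 0$ gives $\abs{\nabla u}^2/(1-u^2)\leq\lambda_2$ wherever $u\neq\pm1$, and where $u=\pm1$ the function attains an interior extremum, so $\nabla u=0$.

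The hard part is the bookkeeping in the computation of $\Delta F$ — the cancellation of the cross term via $\nabla F=0$ and the substitutions that make the $u^2F^2$ terms disappear — together with checking that the $\epsilon$-regularization keeps $g$ uniformly positive, so that the maximum of $F$ is genuinely attained and the pointwise inequalities $\nabla F=0$, $\Delta F\leq0$ are available. (If one also wanted to allow a convex boundary with the Neumann condition $\ps{\nabla u}{\hat n}=0$, one would additionally observe that convexity gives $\partial_{\hat n}\abs{\nabla u}^2=-2\operatorname{II}(\nabla u,\nabla u)\leq 0$ and $\partial_{\hat n}g=0$, hence $\partial_{\hat n}F\leq 0$ on $\partial M$; since a boundary maximum of $F$ forces $\partial_{\hat n}F\geq 0$, this reduces the boundary case to the interior one.)
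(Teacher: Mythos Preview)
Your proof is correct and is precisely the detailed execution of the sketch in the paper: the same auxiliary function $F=\abs{\nabla u}^2/(1+\epsilon-u^2)$, the same maximum-principle argument via $\nabla F=0$ and $\Delta F\leq 0$, and the Bochner formula with $\Ric\geq 0$. The paper omits all the bookkeeping you carry out (the Hessian eigenvector identity $\nabla^2 u(\nabla u,\cdot)=-uF\,\nabla u$, the resulting bound $\abs{\nabla^2 u}^2\geq u^2F^2$, and the cancellations leading to $F^2\leq\lambda_2 F$), so your write-up is a faithful and complete expansion of exactly the approach the author had in mind.
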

With this gradient estimate Li and Yau proved that
\begin{gather*}
 \lambda_2 \geq \frac{\pi^2}{4d^2} \ .
\end{gather*}
For the reader's convenience, we briefly sketch the proof of this estimate. Rescale the eigenfunction $u$ in such a way that $m=\min\{u\}=-1$ and $0<M=\max\{u\}\leq 1$ and consider a unit speed minimizing geodesic $\gamma$ joining a minimum point $x_-$ and a maximum point $x_+$ for $u$, then a simple change of variables yields:
\begin{gather*}
\frac{\pi}{2}=\int_{-1}^0\frac{du}{\sqrt{1-u^2}} < \int_{m}^M \frac{du}{\sqrt{1-u^2}}  \leq \int_{\gamma}\frac{\abs{\nabla u}}{\sqrt{1-u^2}} dt \int_{\gamma} \frac{\abs{\nabla u}}{\sqrt{1-u^2}} dt \leq \sqrt{\lambda_2} d
\end{gather*}
Note that the strict inequality in this chain forces this estimate to be non-sharp, inequality which arises from the fact that $\max\{u\}=M>0$. If in addition we suppose that $M=1$, we can improve this estimate and get directly the sharp one. This suggests that it is important to consider the behaviour of the maximum of the eigenfunction to improve this partial result. In fact Li and Yau were able to sharpen their estimate by using the function $F\equiv \frac{\abs{\nabla u}^2}{(u-m)(M-u)}$ for their gradient estimate, which led them to prove that $\lambda_2\geq \frac{\pi^2}{2d^2}$.

J. Zhong and H. Yang obtained the sharp estimate using a barrier argument to improve further the gradient estimate (see \cite{ZY} and also \cite{SYred}).

Later on M. Chen and F. Wang in \cite{chen} and \cite{chen-} and independently P. Kroger in \cite{kro} (see also \cite{kro2} for explicit bounds) with different techniques were able to estimate the first eigenvalue of the Laplacian by using a one dimensional model. Note that their work also applies to generic lower bounds for the Ricci curvature. The main tool in \cite{chen-} is a variational formula, while \cite{kro} uses a gradient comparison technique. This second technique was also adapted by D. Bakry and Z. Qian in \cite{new} to obtain eigenvalue estimates for weighted Laplace operators with assumptions on the Bakry-Emery Ricci curvature. In this article we will follow this latter technique based on the gradient comparison. Roughly speaking, the basic idea is to find the right function $w:\R\to \R$ such that $\abs{\nabla u}\leq \abs{\dot w}|_{w^{-1}(u)}$ on $M$. In order to find what conditions $w$ must satisfy and to prove the gradient comparison, Bakry and Qian use, among other instruments, some 
estimates related to the Bochner formula. For the sake of extending this estimates in our setting, we will prove a generalized version valid for any $p\in (1,\infty)$ of this well-known formula.

In the generic $p$ case, some estimates on the first eigenvalue of the $p$-Laplacian are known, in particular see \cite{hui} and \cite{kn}; \cite{take} presents different kind of estimates, and for a general review on the problem with a variational twist see \cite{le}. In \cite{hui} and \cite{kn} the general idea of the estimate is the same as in the linear case, in fact the authors get a gradient estimate via the maximum principle, but instead of using the usual Laplacian in $\Delta F\leq 0$ at the maximum point, they use the linearized $p$-Laplacian, which will be introduced later in this work.

By estimating the function $F=\frac{\abs{\nabla u}^2}{1-u^2}$, \cite{kn} is able to prove that on a compact Riemannian manifold with $\Ric \geq 0$ and for $p\geq 2$
\begin{gather*}
 \lambda_p\geq \frac 1 {p-1} \ton{\frac \pi {4d} }^p   \ ,
\end{gather*}
while \cite{hui} uses $F=\frac{\abs{\nabla u}^p}{1-u^p}$ and assumes that the Ricci curvature is quasi-positive (i.e. $\Ric\geq 0$ on $M$ but with at least one point where $\Ric >0$), to prove that for $p>1$
\begin{gather*}
 \lambda_p \geq (p-1) \ton{\frac{\pi_p}{2d}}^p \ .
\end{gather*}

The estimate proved in this article is better than both these estimates and it is sharp. In fact, as we will see, on any one dimensional circle or segment the first nontrivial eigenvalue of the $p$-Laplacian is exactly $\frac{\lambda_p }{p-1}= \ton{\frac{\pi_p}{d}}^p$.


As for the applications of this result, recall that the first eigenvalue of the $p$-Laplacian is related to the Poincar\'e constant, which is by definition
\begin{gather*}
 C_p=\inf\left\{\frac{\int_M \abs{\nabla u}^p d\V}{\int_M \abs u^p d\V} \ \text{ with }u\in M \ s.t. \ \ \int_M \abs u ^{p-2} u \ d\V=0\right\} \ .
\end{gather*}
In particular by standard variational techniques one shows that $C_p=\lambda_p$, so a sharp estimate on the first eigenvalue is of course a sharp estimate on the Poincar\'e constant. Recall also that in the case of a manifold with boundary this equivalence holds if one assumes Neumann boundary conditions on the $p$-Laplacian. Using different techniques and in Euclidean setting, a sharp estimate similar to the one in this article has been obtained independently in \cite{carlo}.

Other applications (surprisingly also of practical interest) related to the $p$-Laplacian are discussed in \cite[Pag. 2]{W} and \cite{Diaz}.

It is worth mentioning that very recent studies have been made on the connection of the first eigenvalue of the $p$-Laplacian with the Ricci flow, see \cite{flow}.

The article is organized as follows: first we briefly discuss the case $n=1$ where the eigenfunction assumes an explicit form, then we define the linearized $p$-Laplacian and prove a sort of $p$-Bochner formula. Using some technical lemmas needed to study the one dimensional model functions, we will be able to state and prove the gradient comparison theorem, and as a consequence also the main theorem on the spectral gap, which is:
\begin{teo}\label{teo_1}
 Let $M$ be a compact Riemannian manifold with nonnegative Ricci curvature, diameter $d$ and possibly with convex boundary. Let $\lambda_p$ be the first nontrivial (=nonzero) eigenvalue of the $p$-Laplacian (with Neumann boundary condition if necessary), i.e.
\begin{gather*}
 \Delta_p (u) = -\lambda_p \abs u^{p-2} u
\end{gather*}
for some nonconstant function $u$. Then the following sharp estimate holds
\begin{gather*}
 \frac{\lambda_p}{p-1} \geq \frac{\pi_p^p}{d^p} \ .
\end{gather*}
Moreover a necessary (but not sufficient) condition for equality to hold in this estimate is that \[\max\{u\}=-\min\{u\}\, .\]
\end{teo}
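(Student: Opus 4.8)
The plan is to adapt to the $p$-Laplacian the gradient comparison method used by Kr\"oger and by Bakry--Qian for the linear Laplacian. Fix an eigenfunction $u$ associated to $\lambda_p$; up to a multiplicative constant and possibly a change of sign we may assume $\min\{u\}=-1$ and $\max\{u\}=k$ with $k\in(0,1]$, so the oscillation of $u$ is the interval $[-1,k]$ and $k=1$ precisely when $\max\{u\}=-\min\{u\}$. The first ingredient is a one-parameter family of one-dimensional model functions: for $a$ in a suitable range one takes the solution $w=w_a$, on its maximal monotone interval $[0,\bar\delta_a]$, of the one-dimensional eigenvalue equation
\begin{gather*}
 \ton{p-1}\abs{\dot w}^{p-2}\ddot w=-\lambda_p\abs{w}^{p-2}w,\qquad\dot w\ge0\ \text{on}\ \qua{0,\bar\delta_a},
\end{gather*}
normalized so that $w_a(0)=-1$ and $w_a(\bar\delta_a)=k_a\le1$, where $a=0$ is the symmetric model ($k_0=1$ and $w_0$ a scaled translate of $\cosp$). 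Relying on the technical lemmas about $\sinp$ and $\cosp$ proved earlier, one records the facts about $w_a$ that the comparison needs: $\dot w_a>0$ in the interior, the Pr\"ufer-type first integral linking $\dot w_a$ to $w_a$, the fact that $a$ can be chosen with $k_a=k$ (the model then matches the oscillation of $u$), and --- crucially --- that $\bar\delta_a$ is strictly minimized at $a=0$, with $\bar\delta_0=\pi_p\ton{\frac{p-1}{\lambda_p}}^{1/p}$.

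Next I would introduce the linearized $p$-Laplacian $P$ of the eigenfunction $u$, the (degenerate) elliptic operator obtained by linearizing $\Delta_p$ at $u$ on the set $\{\nabla u\ne0\}$, and use the generalized Bochner formula of the previous section together with $\Ric\ge0$ to obtain there a differential inequality of the shape
\begin{gather*}
 P\ton{\abs{\nabla u}^p}\ \ge\ \Phi\ton{\lambda_p,u,\abs{\nabla u}},
\end{gather*}
with $\Phi$ an explicit lower-order expression. The heart of the proof is then the gradient comparison theorem: choosing $a$ so that $k_a=k$, one claims
\begin{gather*}
 \abs{\nabla u}(x)\ \le\ \dot w_a\ton{w_a^{-1}\ton{u(x)}}\qquad\text{for all }x\in M.
\end{gather*}
The proof is by contradiction through the maximum principle. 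After an $\epsilon$-regularization of $w_a$ removing the degeneracy of $\dot w_a$ at the endpoints, one shows that $F_\epsilon=\abs{\nabla u}^p-\ton{\dot w_{a,\epsilon}\circ w_{a,\epsilon}^{-1}(u)}^p$ cannot have a positive maximum: at such a point (which must lie in $\{\nabla u\ne0\}$, the set $\{\nabla u=0\}$ being treated separately) one has $\nabla F_\epsilon=0$ and $P(F_\epsilon)\le0$, whereas inserting the $p$-Bochner inequality, the model ODE for $w_{a,\epsilon}$ and $\Ric\ge0$ into $P(F_\epsilon)$ yields $P(F_\epsilon)>0$. A boundary maximum is excluded using the Neumann condition together with the convexity of $\partial M$, through a Hopf/Reilly-type boundary computation in which the second fundamental form contributes with the favorable sign. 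Letting $\epsilon\to0$ gives the comparison on all of $M$.

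Granting the gradient comparison, the estimate follows by integrating along a unit-speed minimizing geodesic $\gamma\colon[0,\ell]\to M$ joining a minimum point of $u$ to a maximum point, so $\ell\le d$:
\begin{gather*}
 d\ \ge\ \ell\ =\ \int_0^\ell 1\,dt\ \ge\ \int_0^\ell\frac{\abs{\nabla u}\ton{\gamma(t)}}{\dot w_a\ton{w_a^{-1}\ton{u\ton{\gamma(t)}}}}\,dt\ \ge\ \bar\delta_a\ \ge\ \bar\delta_0\ =\ \pi_p\ton{\frac{p-1}{\lambda_p}}^{1/p},
\end{gather*}
where the third inequality is the change of variable $u=w_a(s)$ along $\gamma$; raising to the power $p$ and rearranging gives $\lambda_p/(p-1)\ge\pi_p^p/d^p$. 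For the equality statement, assume $d=\pi_p\ton{(p-1)/\lambda_p}^{1/p}$: then every inequality in the chain is an equality, in particular $\bar\delta_a=\bar\delta_0$, and since $\bar\delta_\cdot$ is strictly minimized at $0$ this forces $a=0$, i.e. the matching model is the symmetric one, hence $k=k_0=1$ and $\max\{u\}=1=-\min\{u\}$. As the paper remarks this condition is only necessary: forcing the remaining equalities --- that $\gamma$ realizes the diameter and that the gradient comparison is saturated all along $\gamma$ --- leads to the stronger conclusion that $M$ is one-dimensional.

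The step I expect to be the main obstacle is the gradient comparison theorem. The operator $P$ degenerates both on $\{\nabla u=0\}$ and where $\dot w_a$ vanishes, so the maximum principle has to be run on a regularized problem with careful control of the sign of every lower-order term produced by the $p$-Bochner formula, and the handling of the degenerate set $\{\nabla u=0\}$ and of a possible boundary maximum (where the convexity hypothesis enters essentially) is delicate. A secondary, purely one-dimensional, difficulty is the ODE analysis establishing that $\bar\delta_a$ is strictly minimized at the symmetric parameter $a=0$: this monotonicity is exactly what makes the estimate sharp and what yields the necessary condition for equality.
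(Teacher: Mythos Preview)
Your overall strategy is the paper's, but there are two genuine gaps, one of them fatal.

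First, the one-dimensional model you wrote down cannot produce the family you need. The undamped equation $(p-1)\abs{\dot w}^{p-2}\ddot w=-\lambda_p\abs{w}^{p-2}w$ has the first integral $\abs{\dot w}^p+\tfrac{\lambda_p}{p-1}\abs{w}^p=\text{const}$, so every solution with $w(0)=-1$ and $\dot w(0)=0$ (forced by ``maximal monotone interval'') reaches maximum exactly $1$: there is no parameter and $k_a\equiv 1$. The paper's family solves instead the \emph{damped} equation
\[
\frac{d}{dt}\dot w^{(p-1)}+\frac{n-1}{t}\,\dot w^{(p-1)}+\lambda\, w^{(p-1)}=0,\qquad w(a)=-1,\ \dot w(a)=0,
\]
indexed by the initial point $a\in[0,\infty)$; the damping breaks the conservation law and yields maxima $m(a)<1$ with $m(a)\to 1$ as $a\to\infty$ (the undamped limit). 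Using only the symmetric model, as the paper itself remarks, gives just the non-sharp bound $\lambda_p/(p-1)>(\pi_p/2d)^p$, because one can only integrate from $-1$ to $0$ rather than from $-1$ to $k$.

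Second, and more seriously, the clause ``the fact that $a$ can be chosen with $k_a=k$'' conceals the hardest step of the whole argument. With the correct damped family the range of $a\mapsto m(a)$ is $[m(0),1]$, so you must prove $\max\{u\}\ge m(0)$. This is not an ODE lemma: it requires a \emph{volume comparison on $M$}. The paper obtains it by integrating the gradient comparison against test functions to show that the ratio
\[
\frac{\int_{\{u\le w(s)\}} u^{(p-1)}\,d\V}{\int_a^s w(t)^{(p-1)}\,t^{n-1}\,dt}
\]
is monotone, and then comparing the resulting bound $\V(B(x_0,r))\le c\,r^n$ near a minimum point against what would follow (with exponent $n'>n$) if $\max\{u\}<m(0)$. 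Your outline has no mechanism whatsoever to rule out $\max\{u\}<m(0)$; without it the geodesic integration cannot start, because there is no model matching the oscillation of $u$.

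A smaller point: in the gradient comparison the paper cannot close the maximum-principle argument for $F=\abs{\nabla u}^p-\phi(u)$ directly; it must introduce a weight, $F=\psi(u)\bigl(\abs{\nabla u}^p-\phi(u)\bigr)$, and then carefully construct $\psi$ (via an auxiliary ODE) so that at a positive maximum the inequality $a_1F^2+a_2F\le 0$ has $a_1,a_2>0$. Your weightless $F_\epsilon$ will not give the needed sign after the $p$-Bochner formula is inserted.
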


\noindent The characterization of the equality case is dealt with in the last section. In \cite{Hang}, this characterization is proved in the case where $p=2$ to answer a problem raised by T. Sakai in \cite{saka}. Unfortunately, this proof relies on the properties of the Hessian of a 2-eigenfunction, which are not easily generalized for generic $p$.

\subsection{Positive and negative lower bounds on Ricci}
If the manifold $M$ has Ricci curvature bounded from below by a positive constant, the sharp estimate for $\lambda_{1,p}$ is obtained in \cite{matei}, where the author uses Levy-Gromov isoperimetric inequality to prove a generalized version of Obata's theorem. For negative lower bounds on Ricci, the sharp estimate (for generic $p\in (1,\infty)$) is proved in the later work \cite{nava}.

\subsection{Notation}\label{sec_not}
We will use the following conventions. $(M,\ps \cdot \cdot)$ will indicate a Riemannian manifold with nonnegative Ricci curvature, diameter $d$ and dimension $n$. Throughout the article, we fix $p>1$ (so we will write $\lambda$ for $\lambda_p$), and we will define for any $w\in \R$
\begin{gather*}
 w^{(p-1)}\equiv \abs{w}^{p-2}w= \abs{w}^{p-1} \operatorname{sign}(w) \ .
\end{gather*}
Given a function $u:M\to \R$, $H_u$ will denote its Hessian where defined, and we set
\begin{gather*}
 A_u\equiv \frac{\pst{\nabla u}{H_u }{\nabla u}}{\abs{\nabla u}^2} \ .
\end{gather*}
We will use the convention
\begin{gather*}
 u_{ij}= \nabla_j \nabla_i u 
\end{gather*}
and the Einstein summation convention. We will consider the Hessian as a $(2,0)$ or $(1,1)$ tensor, so for example
\begin{gather*}
 H_u(\nabla u,\nabla f)= u_{ij}u^i f^j \ .
\end{gather*}
$\abs{H_u}$ will indicate the Hilbert-Schmidt norm of $H_u$, so that
\begin{gather*}
 \abs{H_u}^2= u_{ij}u^{ij} \ .
\end{gather*}

In the following we will (sometimes implicitly) use the regularity theorems valid for solutions of equation \eqref{eq_plap}. 
In general, the solution belongs to $W^{1,p}(M)\cap C^{1,\alpha}(M)$ for some $\alpha>0$, and elliptic regularity ensures that $u$ is a smooth function where $\nabla u \neq 0$ and $u\neq 0$. If $\nabla u(x)\neq 0$ and $u(x)=0$, then $u\in C^{3,\alpha}(U)$ if $p>2$ and $u\in C^{2,\alpha}(U)$ for $1<p<2$, where $U$ is a suitably small neighborhood of $x$. The standard reference for these results is \cite{reg}, where the problem is studied in local coordinates.

\section{One dimensional p-Laplacian}\label{sec_1d}
The first nontrivial eigenfunction of the p-Laplacian is very easily found if $n=1$. In this case it is well-known that $M$ is either a circle or a segment, moreover equation \eqref{eq_plap} assumes the form
\begin{gather}\label{eq_plap1}
 (p-1)\abs{\dot u}^{p-2} \ddot u +\lambda u^{(p-1)}=0 \ .
\end{gather}
In order to study this eigenvalue problem, we define the function $\sinp(x)$ on $\qua{-\frac {\pi_p}2,\frac{3\pi_p}2}$ by
\begin{gather*}
 \begin{cases}
x=\int_0^{\sin_p(x)} \frac{ds}{(1-s^p)^{1/p}} & \text{  if } x\in \qua{-\frac{\pi_p}{2},\frac{\pi_p}{2}}\\
\sin_p(\pi_p-x) & \text{  if } x\in \qua{\frac{\pi_p}{2},\frac{3 \pi_p}{2}}
 \end{cases}
\end{gather*}
and extend it on the whole real line as a periodic function of period $2\pi_p$. It is easy to check that for $p\neq 2$ this function is smooth around noncritical points, but only $C^{1,\alpha}(\R)$ for $\alpha=\min\{p-1,(p-1)^{-1}\}$. For a more detailed study of the $p$-sine, we refer the reader to \cite{dosly} and \cite[pag. 388]{pino2}.

Define the quantity
\begin{gather}
e(x)= \ton{\abs{\dot u }^{p} + \frac{\lambda \abs u^p}{p-1}}^{\frac 1 p} .
\end{gather}
If $u$ is a solution to \eqref{eq_plap1}, then $e$ is constant on the whole manifold, so by integration we see that all solutions of \eqref{eq_plap1} are of the form $A\sinp(\lambda^{1/p} x+B)$ for some real constants $A,B$. Due to this observation our one-dimensional eigenvalue problem is easily solved.

In fact, identify the circumference of length $2d$ with the real interval $[0,2d]$ with identified end-points. It is easily seen that the first eigenfunction on this manifold is, up to translations and dilatations, $u= \sin_p (\alpha x)$, where $\alpha= \frac {\pi_p}{d}$. Then by direct calculation we have
\begin{gather}
 \frac{\lambda}{p-1}= \ton {\frac {\pi_p}{d}}^p \ .
\end{gather}

The case with boundary (i.e. the one-dimensional segment) is completely analogous, so at least in the $n=1$ case the proof of Theorem \ref{teo_1} is quite straightforward.

\begin{rem}
Note that in this easy case, the absolute values of the maximum and minimum of the eigenfunction always coincide and the distance between a maximum and a minimum is always $d=\frac{\pi_p}{\alpha}$. Note also that if we call $\cosp(x)\equiv \frac d {dx} \sinp(x)$, then the well known identity $\sin^2(x)+\cos^2(x)=1$ generalizes to $\abs{\sinp(x)}^p+\abs{\cosp(x)}^p=1$. 
\end{rem}

\section{Linearized p-Laplacian and p-Bochner formula}
In this section we introduce the linearized operator of the $p$-Laplacian and study some of its properties.

First of all, we calculate the linearization of the p-Laplacian near a function $u$ in a naif way, i.e., we define
\begin{gather*}
 P_u(\eta)\equiv \left.\frac{d}{dt}\right\vert_{t=0} \Delta_p(u+t\eta)=\\
=\dive{(p-2) \abs{\nabla u}^{p-4}\ps{\nabla u}{\nabla \eta}\nabla u + \abs{\nabla u }^{p-2}\nabla \eta}=\\
=(p-2)\Delta_p(u)\frac{\ps{\nabla u}{\nabla \eta}}{\abs{\nabla u }^2}+(p-2)\abs{\nabla u}^{p-2}\ps{\nabla u }{\nabla \frac{\ps{\nabla u}{\nabla \eta}}{\abs{\nabla u }^2}}+\\
+(p-2)\abs{\nabla u }^{p-4}\pst{\nabla u }{H_u}{\nabla \eta} + \abs{\nabla u}^{p-2}\Delta \eta=\\
= \abs{\nabla u}^{p-2}\Delta \eta +(p-2)\abs{\nabla u}^{p-4}\pst{\nabla u}{H_\eta}{\nabla u}+ (p-2)\Delta_p(u)\frac{\ps{\nabla u}{\nabla \eta}}{\abs{\nabla u }^2}+ \\
+ 2(p-2)\abs{\nabla u}^{p-4}\pst{\nabla u}{H_u}{\nabla \eta - \frac{\nabla u}{\abs{\nabla u}}\ps{\frac{\nabla u}{\abs{\nabla u}}}{\nabla \eta}} \ .
\end{gather*}
If $u$ is an eigenfunction of the $p$-Laplacian, this operator is defined pointwise only where the gradient of $u$ is non zero (and so $u$ is locally smooth) and it is easily proved that at these points it is strictly elliptic. For convenience, denote by $P^{II}_u$ the second order part of $P_u$, which is
\begin{gather*}
 {P_u}^{II}(\eta) \equiv \abs{\nabla u}^{p-2}\Delta \eta+(p-2)\abs{\nabla u}^{p-4}\pst{\nabla u}{H_\eta}{\nabla u} \ ,
\end{gather*}
or equivalently
\begin{gather}\label{deph_pu}
 {P_u}^{II}(\eta) \equiv \qua{\abs{\nabla u}^{p-2}\delta_i^{j} +(p-2)\abs{\nabla u}^{p-4}\nabla_i u \nabla^j u }\nabla^{i}\nabla_j \eta\ . 
\end{gather}

Note that $P_u(u)=(p-1)\Delta_p(u)$ and $P^{II} _u(u)=\Delta_p(u)$.

The main property enjoyed by the linearized $p$-Laplacian is the following version of the celebrated Bochner formula.
\begin{prop}[p-\textsc{Bochner formula}]
 Given $x\in M$, a domain $U$ containing $x$, and a function $u\in C^3(U)$, if $\nabla u |_x\neq 0$ on $U$ we have
\begin{gather*}
 \frac{1}{p} P^{II}_u(\abs{\nabla u}^{p})=\\
 \abs{\nabla u}^{2(p-2)}\{\abs{\nabla u}^{2-p}[\ps{\nabla \Delta_p u}{\nabla u}-(p-2)A_u\Delta_p u]+\\
+\abs{H_u}^2+p(p-2)A_u^2+ \operatorname{Ric}(\nabla u,\nabla u) \} \ .
\end{gather*}
In particular this equality holds if $u$ is an eigenfunction of the $p$-Laplacian, $p\geq 2$ and $\nabla u |_x \neq 0$; or also if $1<p<2$ and $\nabla u|_x \neq 0$ and $u(x)\neq 0$.
\end{prop}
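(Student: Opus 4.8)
The plan is to compute $P^{II}_u(\abs{\nabla u}^p)$ directly in local coordinates around the point $x$ where $\nabla u\neq 0$, exploiting the fact that $u$ is smooth there, and to organize the computation so that the classical Bochner identity appears as the ``$p=2$ core'' together with correction terms governed by $A_u$. First I would write $\abs{\nabla u}^p = (\abs{\nabla u}^2)^{p/2}$ and compute its first and second covariant derivatives via the chain rule: $\nabla_j \abs{\nabla u}^p = p\abs{\nabla u}^{p-2} u_{kj}u^k$, and then differentiate once more to get $\nabla_i\nabla_j\abs{\nabla u}^p$ in terms of $u_{ij}$, $u_{ijk}$ (third derivatives), and products like $u_{kj}u^k u_{li}u^l$. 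Plugging this into the definition \eqref{deph_pu} of $P^{II}_u$, i.e. contracting against $\abs{\nabla u}^{p-2}\delta_i^j + (p-2)\abs{\nabla u}^{p-4}\nabla_i u\nabla^j u$, yields a sum of terms that I would group by their order in derivatives of $u$.

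The second-order-in-third-derivatives terms (those containing $u_{ijk}$) should reassemble, after commuting covariant derivatives, into $\ps{\nabla\Delta u}{\nabla u}$-type expressions; here I would use the Ricci identity $u_{ijk} - u_{ikj} = R_{kjil}u^l$ (contracted appropriately) to trade the commutator for the $\operatorname{Ric}(\nabla u,\nabla u)$ term. Crucially, the operator $\Delta_p u = \dive(\abs{\nabla u}^{p-2}\nabla u) = \abs{\nabla u}^{p-2}(\Delta u + (p-2)A_u)$ must be used to convert raw third derivatives of $u$ into derivatives of $\Delta_p u$; this is where the terms $\abs{\nabla u}^{2-p}[\ps{\nabla\Delta_p u}{\nabla u} - (p-2)A_u\Delta_p u]$ in the statement come from, and matching coefficients here is the delicate bookkeeping step. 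The purely second-order terms (quadratic in $u_{ij}$) should combine into $\abs{H_u}^2$ plus multiples of $A_u^2 = (u_{ij}u^iu^j)^2/\abs{\nabla u}^4$; I expect the $(p-2)\abs{\nabla u}^{p-4}\nabla_iu\nabla^ju$ part of $P^{II}_u$ acting on the $\abs{\nabla u}^{p-4}u_{kj}u^ku_{li}u^l$ piece of the Hessian of $\abs{\nabla u}^p$ to produce exactly the $p(p-2)A_u^2$ coefficient after all the $\abs{\nabla u}$-powers are collected into the overall prefactor $\abs{\nabla u}^{2(p-2)}$.

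I expect the main obstacle to be purely organizational: keeping track of the many powers of $\abs{\nabla u}$ and the numerous cross-terms without error, and in particular correctly isolating the coefficient of $A_u\Delta_p u$ and of $A_u^2$, since several distinct contributions land on each of these. A clean way to control this is to first verify the identity at $p=2$ (where it must reduce to the standard Bochner formula $\tfrac12\Delta\abs{\nabla u}^2 = \ps{\nabla\Delta u}{\nabla u} + \abs{H_u}^2 + \operatorname{Ric}(\nabla u,\nabla u)$, noting $P^{II}_u = \abs{\nabla u}^{p-2}\Delta\,(\cdot)$ collapses and the $A_u$ terms vanish), and then track only the extra $(p-2)$-proportional terms. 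The final sentence, that the formula holds for an eigenfunction where $\nabla u\neq 0$, is immediate since by elliptic regularity such a $u$ is $C^\infty$ near any point with $\nabla u\neq 0$, so the $C^2(U)$ hypothesis is met and one may additionally substitute $\Delta_p u = -\lambda\,\pu$ if desired.
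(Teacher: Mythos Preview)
Your proposal is correct and follows essentially the same route as the paper: compute the covariant derivatives of $\abs{\nabla u}^p$ via the chain rule, use the Ricci commutation identity to produce the $\operatorname{Ric}(\nabla u,\nabla u)$ term, expand $\ps{\nabla\Delta_p u}{\nabla u}$ using $\Delta_p u=\abs{\nabla u}^{p-2}(\Delta u+(p-2)A_u)$, and then match terms. The paper organizes the bookkeeping by computing $\Delta(\abs{\nabla u}^p)$, the contraction $\abs{\nabla u}^{-2}\nabla_i\nabla_j(\abs{\nabla u}^p)\nabla^i u\nabla^j u$, and $\ps{\nabla\Delta_p u}{\nabla u}$ as three separate identities and then combining them, whereas you propose to plug directly into $P^{II}_u$ and sort afterwards; these are the same computation arranged slightly differently.
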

\begin{proof}
Just as in the usual Bochner formula, the main ingredients for this formula are the commutation rule for third derivatives and some computations.

First, compute $\Delta (\abs{\nabla u}^p)$, and to make the calculation easier consider a normal coordinate system centered at the point under consideration. Using the notation introduced in Section \ref{sec_not} we have
\begin{gather*}
\frac 1 p \Delta(\abs{\nabla u}^p) = \nabla^i\ton{ \abs{\nabla u}^{p-2} u_{ji} u^j}=\\
=\abs{\nabla u}^{p-2} \ton{ \frac {p-2}{\abs {\nabla u}^2}u^{is}u_s u_{ik}u^k+ u_{kii} u^k + u_{ik}u^{ik}} \ .
\end{gather*}
The commutation rule now allows us to interchange the indexes in the third derivatives. In particular remember that in a normal coordinate system we have
\begin{gather}\label{ref_a}
 u_{ij}=u_{ji}\ \ \ \ \ u_{ijk}-u_{ikj}=-R_{lijk}u^l\\
\notag u_{kii}=u_{iki}=u_{iik}+ \operatorname{Ric}_{ik}u^i \ .
\end{gather}
This shows that
\begin{gather}\label{ref_b}
\frac 1 p \Delta(\abs{\nabla u}^p) = \\
\notag =\abs{\nabla u}^{p-2} \ton{ \frac {p-2}{\abs {\nabla u}^2}\abs{H_u(\nabla u)}^2+ \ps{\nabla \Delta u}{\nabla u}+ \Ric(\nabla u, \nabla u)+\abs{H_u}^2} \ .
\end{gather}
In a similar fashion we have
\begin{gather*}
\frac 1 p \nabla_i \nabla_j \abs{\nabla u}^p= (p-2)\abs{\nabla u}^{p-4} u_{is}u^s u_{jk}u^k+ \abs{\nabla u}^{p-2}(u_{kij}u^k+ u_{ik}u_j^k) \ ,
\end{gather*}
which leads us to
\begin{gather*}
 \frac 1 p \frac {\nabla_i\nabla_j (\abs{\nabla u}^p) \nabla^i u \nabla^j u }{\abs {\nabla u}^2}=\\
= \abs{\nabla u}^{p-2}\ton{(p-2) A_u^2+ \frac{\nabla_i \nabla_j \nabla_k u \ \nabla^i u \nabla ^j u \nabla^k u}{\abs{\nabla u}^2}+ \frac{\abs{H_u(\nabla u)}^2}{\abs {\nabla u}^2}} \ . 
\end{gather*}
The last computation needed is
\begin{gather*}
 \ps{\nabla \Delta_p u}{\nabla u}= \nabla_i\qua{\abs{\nabla u}^{p-2}\ton{\Delta u + (p-2) A_u}} \nabla^i u=\\
=\ps{\nabla(\abs{\nabla u}^{p-2})}{\nabla u}\abs{\nabla u}^{2-p}\Delta_p u +\\ +\abs{\nabla u}^{p-2}\qua{\ps{\nabla \Delta u}{\nabla u}+(p-2) \ps{\nabla \abs{\nabla u}^{-2}}{\nabla u} \pst{\nabla u}{H_u}{\nabla u}}+\\
+(p-2) \abs{\nabla u}^{p-4}\qua{(\nabla H_u)(\nabla u,\nabla u,\nabla u)+2 \abs{H_u(\nabla u)}^2}=(p-2)A_u\Delta_p u+ \\
\abs{\nabla u}^{p-2}\cur{\ps{\nabla \Delta u}{\nabla u} +(p-2) \qua{-2A_u^2+ \frac{\nabla_i \nabla_j \nabla_k u \nabla^i u \nabla ^j u \nabla^k u}{\abs{\nabla u}^2}+ 2 \frac{\abs{H(\nabla u)}^2}{\abs{\nabla u}^2}}} \ .
\end{gather*}
Using the definition of $P^{II}_u$ given in \eqref{deph_pu}, the p-Bochner formula follows form a simple exercise of algebra.
\end{proof}

In the proof of the gradient comparison, we will need to estimate $P^{II}_u(\abs{\nabla u}^p)$ from below.
If $\Ric\geq 0$ and $\Delta_p u =-\lambda u^{(p-1)}$, one could use the very rough estimate $\abs{H_u}^2\geq A_u^2$ to obtain
\begin{gather*}
\frac 1 p P^{II}_u(\abs{\nabla u}^{p}) \geq \\
\notag\geq (p-1)^2 \abs{\nabla u}^{2p-4}A_u^2+\lambda(p-2)\abs{\nabla u}^{p-2}u^{(p-1)}A_u - \lambda (p-1)\abs{\nabla u}^{p}\abs u ^{p-2} \ .  
\end{gather*}
This estimate is used implicitly in proof of \cite[Li and Yau, Theorem 1 p.110]{SYred} (where only the usual Laplacian is studied), and also in \cite{kn} and \cite{hui}.\\
A more refined estimate on $\abs{H_u}^2$ which works in the linear case is the following
\begin{gather*}
 \abs{H_u}^2\geq \frac{(\Delta u)^2} n + \frac{n} {n-1} \ton{\frac{\Delta u}{n} - A_u}^2 \ .
\end{gather*}
This estimate is the analogue of the curvature-dimension inequality and plays a key role in \cite{new} to prove the comparison with the one dimensional model. Note also that this estimate is the only point where the dimension of the manifold $n$ and the assumption on the Ricci curvature play their role. A very encouraging observation about the $p$-Bochner formula we just obtained is that the term $\abs{H_u}^2+p(p-2)A_u^2$ seems to be the right one to generalize this last estimate, in fact we can prove
\begin{lemma}\label{lemma_n}
At a point where $u$ is $C^2$ and $\nabla u \neq 0$ we have
\begin{gather*}
 \abs{\nabla u}^{2p-4}\ton{\abs{H_u}^2 + p(p-2)A_u^2}\geq\\
\geq \frac{(\Delta_p u)^2}{n} + \frac{n}{n-1}\ton{\frac{\Delta_p u}{n } -(p-1)\abs{\nabla u}^{p-2}A_u}^2 \ .
\end{gather*}
\end{lemma}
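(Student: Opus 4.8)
The plan is to reduce the inequality to a pointwise statement about symmetric matrices and then to apply the classical curvature-dimension estimate $\abs{H_u}^2 \geq \frac{(\Delta u)^2}{n} + \frac{n}{n-1}\ton{\frac{\Delta u}{n}-A_u}^2$ for the Hessian, adjusted for the $p$-dependent shift. First I would fix a point where $\nabla u\neq 0$ and choose an orthonormal frame with $e_1 = \nabla u/\abs{\nabla u}$, so that $A_u = u_{11}$, and abbreviate $H = \abs{\nabla u}^{2-p}\Delta_p u = \Delta u + (p-2)A_u = \operatorname{tr}(H_u) + (p-2)u_{11}$. The left-hand side, after factoring $\abs{\nabla u}^{2p-4}$, becomes $\abs{H_u}^2 + p(p-2)u_{11}^2$; the right-hand side becomes $\frac{\ton{\abs{\nabla u}^{p-2}}^2}{n}H^2\cdot\abs{\nabla u}^{4-2p}$ — wait, more cleanly: dividing the asserted inequality through by $\abs{\nabla u}^{2p-4}$ and writing everything in terms of $H$ and $u_{11}$, the claim is exactly
\begin{gather*}
\abs{H_u}^2 + p(p-2)u_{11}^2 \geq \frac{H^2}{n} + \frac{n}{n-1}\ton{\frac{H}{n} - (p-1)u_{11}}^2 \ .
\end{gather*}
So the manifold, the Ricci bound, and $p$ have all been absorbed into this finite-dimensional linear-algebra inequality, and it suffices to prove this for an arbitrary symmetric $n\times n$ matrix $H_u$ with $(1,1)$-entry $u_{11}=A_u$ and trace-related quantity $H = \operatorname{tr}(H_u)+(p-2)u_{11}$.

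Next I would estimate $\abs{H_u}^2$ from below by the smallest it can be given the constraints on $\operatorname{tr}(H_u)$ and $u_{11}$. Splitting $H_u$ into its $(1,1)$-entry $u_{11}$, the first row/column off-diagonal entries $u_{1j}$ for $j\geq 2$, and the bottom-right $(n-1)\times(n-1)$ block $B$, we have $\abs{H_u}^2 = u_{11}^2 + 2\sum_{j\geq 2}u_{1j}^2 + \abs{B}^2 \geq u_{11}^2 + \abs{B}^2$, and by Cauchy–Schwarz on the $(n-1)$-dimensional block, $\abs{B}^2 \geq \frac{(\operatorname{tr} B)^2}{n-1} = \frac{(\Delta u - u_{11})^2}{n-1}$. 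Thus $\abs{H_u}^2 \geq u_{11}^2 + \frac{(\Delta u - u_{11})^2}{n-1}$; adding $p(p-2)u_{11}^2$ and substituting $\Delta u = H - (p-2)u_{11}$, the left-hand side is bounded below by $(p-1)^2 u_{11}^2 + \frac{(H - (p-1)u_{11})^2}{n-1}$. It now remains to check the purely algebraic identity/inequality
\begin{gather*}
(p-1)^2 u_{11}^2 + \frac{(H - (p-1)u_{11})^2}{n-1} \geq \frac{H^2}{n} + \frac{n}{n-1}\ton{\frac{H}{n} - (p-1)u_{11}}^2 \ ,
\end{gather*}
which I expect to be an actual identity (both sides quadratic in the two variables $H$ and $(p-1)u_{11}$, with the right-hand side being precisely the two-term decomposition of the left via $\operatorname{tr}$ and trace-free parts), and can be verified by expanding and collecting the coefficients of $H^2$, $H\cdot u_{11}$, and $u_{11}^2$.

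The main obstacle is not any single step but making sure the bookkeeping of the shift by $(p-2)u_{11}$ is done consistently: one must track that the quantity playing the role of "$\Delta u$" in the classical curvature-dimension inequality is $\operatorname{tr}(H_u)$, while the quantity appearing in the $p$-Bochner formula and hence in the statement is $\Delta_p u = \abs{\nabla u}^{p-2}(\operatorname{tr}(H_u)+(p-2)A_u)$, and the extra term $p(p-2)A_u^2$ on the left is exactly what is needed to complete the square. A secondary point to handle with care is the degenerate case $n=1$, where the $(n-1)$-block is empty: there $\abs{H_u}^2 = u_{11}^2$ and $\Delta u = u_{11}$, so $H = (p-1)u_{11}$, and the inequality reduces to $(p-1)^2 u_{11}^2 + p(p-2)u_{11}^2 \cdot \abs{\nabla u}^{?}$ — concretely $\abs{H_u}^2 + p(p-2)A_u^2 = (p-1)^2 A_u^2 = \frac{(\Delta_p u)^2}{1}\abs{\nabla u}^{4-2p}$, i.e.\ the $\frac{n}{n-1}$ term drops and equality holds, which should be noted explicitly or the statement read with the convention that the second term is absent when $n=1$.
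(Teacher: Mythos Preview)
Your argument is correct and matches the paper's proof essentially line for line: both choose an orthonormal frame with $e_1=\nabla u/\abs{\nabla u}$, drop the off-diagonal terms $u_{1j}^2$, apply Cauchy--Schwarz to the $(n-1)\times(n-1)$ block to get the lower bound $(p-1)^2 u_{11}^2+\frac{1}{n-1}\bigl(\sum_{i\geq 2}u_{ii}\bigr)^2$, and then observe that the right-hand side equals this expression exactly (your suspected identity is indeed an equality, as the paper verifies by direct expansion). The $n=1$ digression is unnecessary since the statement presupposes $n\geq 2$ via the factor $\tfrac{n}{n-1}$.
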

\begin{proof}
The proof consists only in some calculations that for simplicity can be carried out in a normal coordinate system for which $\abs{\nabla u}|_x=u_1(x)$. At $x$ we can write
\begin{gather*}
 \abs{\nabla u}^{2-p} \Delta_p(u)= \Delta u + (p-2) \frac{\pst{\nabla u}{H_u}{\nabla u}}{\abs{\nabla u}^2}=(p-1)u_{11}+ \sum_{j=2}^n u_{jj} \ .
\end{gather*}
By the standard inequality $\sum_{k=1}^{n-1} a_k^2 \geq \frac 1 {n-1} \ton{\sum_{k=1}^{n-1}a_k}^2$ we get
\begin{gather}\label{eq_Hn}
\notag \abs{H_u}^2+p(p-2)A_u^2 = (p-1)^2 u_{11}^2 + 2\sum_{j=1}^n u_{1j}^2 + \sum_{i,j=2}^n u_{ij}^2\geq\\
\geq (p-1)^2 u_{11}^2 + \frac 1 {n-1} \ton{\sum_{i=2}^n u_{ii}}^2 \ .
\end{gather}
On the other hand it is easily seen that
\begin{gather*}
 \frac{\ton{\abs{\nabla u}^{2-p} \Delta_p (u)}^2}{n} + \frac n {n-1} \ton{\frac{\abs{\nabla u}^{2-p}\Delta_p(u)}{n}- (p-1) A_u}^2= \\
= \frac 1 n \ton{(p-1) u_{11}+\sum_{i=2}^n u_{ii} }^2+ \frac n {n-1} \ton{-\frac{n-1}{n} (p-1) u_{11} + \frac 1 n \sum_{i=2}^n u_{ii}}^2=\\
= (p-1)^2 u_{11}^2 + \frac 1 {n-1} \ton{\sum_{i=2}^n u_{ii}}^2 \ .
\end{gather*}
This completes the proof.
\end{proof}

\begin{cor}\label{cor_est_pu}
 If $u$ is an eigenfunction relative to the eigenvalue $\lambda$, and at a point where $\nabla u \neq 0$ and $u\neq 0$ we can estimate
\begin{gather*}
 \frac 1 p P^{II}_u \abs{\nabla u}^p \geq \frac{\lambda^2 u^{2p-2}}{n-1} + \frac{2(p-1)\lambda}{n-1} u^{(p-1)} \abs{\nabla u}^{p-2} A_u + \frac{n}{n-1}(p-1)^2 \abs{\nabla u}^{2p-4} A_u^2+\\
-\lambda(p-1)u^{p-2}\abs{\nabla u}^p+\lambda(p-2)\abs{\nabla u}^{p-2} A_u u^{(p-1)}  \ .
\end{gather*}
The assumption $u\neq 0$ is not necessary if $p\geq 2$.
\end{cor}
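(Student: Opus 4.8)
The plan is to combine the $p$-Bochner formula from the previous proposition with the dimension-dependent estimate of Lemma~\ref{lemma_n}, specialised to the case where $u$ is an eigenfunction. First I would start from the $p$-Bochner identity
\begin{gather*}
\frac 1 p P^{II}_u(\abs{\nabla u}^p)= \abs{\nabla u}^{2(p-2)}\Big\{\abs{\nabla u}^{2-p}\big[\ps{\nabla \Delta_p u}{\nabla u}-(p-2)A_u\Delta_p u\big]+\abs{H_u}^2+p(p-2)A_u^2+\Ric(\nabla u,\nabla u)\Big\}
\end{gather*}
and discard the Ricci term using $\Ric\geq 0$, which only weakens the equality to an inequality in the desired direction. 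This is exactly the point where the curvature hypothesis enters.

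Next I would substitute the eigenvalue equation $\Delta_p u=-\lambda u^{(p-1)}$ into the two terms on the first line. For $\ps{\nabla\Delta_p u}{\nabla u}$ one differentiates $-\lambda u^{(p-1)}=-\lambda\abs u^{p-2}u$, giving $\nabla(\Delta_p u)=-\lambda(p-1)\abs u^{p-2}\nabla u$, hence $\ps{\nabla\Delta_p u}{\nabla u}=-\lambda(p-1)\abs u^{p-2}\abs{\nabla u}^2$; multiplying by the prefactor $\abs{\nabla u}^{2(p-2)}\abs{\nabla u}^{2-p}=\abs{\nabla u}^{p-2}$ yields the term $-\lambda(p-1)u^{p-2}\abs{\nabla u}^p$ appearing in the corollary. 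The term $-(p-2)A_u\Delta_p u$ becomes $+\lambda(p-2)A_u u^{(p-1)}$, and after multiplying by $\abs{\nabla u}^{p-2}$ gives $\lambda(p-2)\abs{\nabla u}^{p-2}A_u u^{(p-1)}$, which is the final term in the statement.

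For the remaining piece $\abs{\nabla u}^{2(p-2)}(\abs{H_u}^2+p(p-2)A_u^2)$ I would invoke Lemma~\ref{lemma_n} directly, which bounds it below by $\frac{(\Delta_p u)^2}{n}+\frac{n}{n-1}\big(\frac{\Delta_p u}{n}-(p-1)\abs{\nabla u}^{p-2}A_u\big)^2$. Expanding this square and again inserting $\Delta_p u=-\lambda u^{(p-1)}$ (so $(\Delta_p u)^2=\lambda^2 u^{2p-2}$ since $(u^{(p-1)})^2=\abs u^{2p-2}=u^{2p-2}$ when $p$ is even in notation — more precisely $(u^{(p-1)})^2=\abs{u}^{2(p-1)}$ which the paper writes $u^{2p-2}$), one gets
\begin{gather*}
\frac{(\Delta_p u)^2}{n}+\frac{n}{n-1}\Big(\frac{\Delta_p u}{n}-(p-1)\abs{\nabla u}^{p-2}A_u\Big)^2=\frac{\lambda^2 u^{2p-2}}{n-1}+\frac{2(p-1)\lambda}{n-1}u^{(p-1)}\abs{\nabla u}^{p-2}A_u+\frac{n}{n-1}(p-1)^2\abs{\nabla u}^{2p-4}A_u^2\, ,
\end{gather*}
which is precisely the first line of the corollary. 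Assembling these pieces gives the claimed inequality. I do not expect any genuine obstacle here — the proof is entirely bookkeeping — but the one place to be careful is the consistent tracking of the powers of $\abs{\nabla u}$ coming from the outer factor $\abs{\nabla u}^{2(p-2)}$ against the factors $\abs{\nabla u}^{2-p}$ inside the braces and the $\abs{\nabla u}^{p-2}$ hidden in the relation $\abs{\nabla u}^{2-p}\Delta_p u=\Delta u+(p-2)A_u$, together with the sign conventions in $u^{(p-1)}$ versus $u^{p-2}$.
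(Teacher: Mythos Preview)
Your proposal is correct and is exactly the argument the paper intends: the corollary has no separate proof in the paper because it follows immediately from the $p$-Bochner formula together with Lemma~\ref{lemma_n}, the hypothesis $\Ric\geq 0$, and the substitution $\Delta_p u=-\lambda u^{(p-1)}$. Your bookkeeping of the powers of $\abs{\nabla u}$ and the expansion of the square from Lemma~\ref{lemma_n} are all accurate.
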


Note that if we substitute $n$ with $n\leq m \in \R$ in the conclusion of Lemma \ref{lemma_n}, a simple algebraic computation shows that the conclusion still holds, in particular we have the following corollary.
\begin{cor}\label{cor_m}
 Under the hypothesis of Lemma \ref{lemma_n}, if $u$ is defined on a $n$-dimensional Riemannian manifold we have for any $n\leq m \in \R$
\begin{gather*}
 \abs{\nabla u}^{2p-4}\ton{\abs{H_u}^2 + p(p-2)A_u^2}\geq\\
\geq \frac{(\Delta_p u)^2}{m} + \frac{m}{m-1}\ton{\frac{\Delta_p u}{m } -(p-1)\abs{\nabla u}^{p-2}A_u}^2 \ .
\end{gather*}
\end{cor}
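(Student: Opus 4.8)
The plan is to deduce Corollary~\ref{cor_m} from Lemma~\ref{lemma_n} by observing that the right-hand side of that inequality, viewed as a function of the free parameter that has replaced the dimension, is non-increasing. I would first abbreviate $a\equiv\Delta_p u$ and $b\equiv(p-1)\abs{\nabla u}^{p-2}A_u$, so that the quantity to be bounded below is, as a function of $t\in(1,\infty)$,
\begin{gather*}
 g(t)\equiv\frac{a^2}{t}+\frac{t}{t-1}\ton{\frac{a}{t}-b}^2 \ .
\end{gather*}
The one computation to carry out is the elementary identity, obtained by expanding the square and clearing the denominator $t(t-1)$,
\begin{gather*}
 g(t)=\frac{a^2-2ab+tb^2}{t-1}=b^2+\frac{(a-b)^2}{t-1}\ ,
\end{gather*}
whence $g'(t)=-(a-b)^2(t-1)^{-2}\leq 0$, so that $g$ is non-increasing on $(1,\infty)$.

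Granting this, the corollary is immediate: Lemma~\ref{lemma_n} is exactly the inequality $\abs{\nabla u}^{2p-4}\ton{\abs{H_u}^2+p(p-2)A_u^2}\geq g(n)$, and since the manifold is $n$-dimensional with $n\geq 2$ we have $1<n\leq m$, hence $g(n)\geq g(m)$, which is precisely the assertion with $m$ in place of $n$. An equally short alternative is to rerun the computation in the proof of Lemma~\ref{lemma_n} verbatim with $m$ instead of $n$, replacing the step $\sum_{i=2}^n u_{ii}^2\geq\tfrac{1}{n-1}\ton{\sum_{i=2}^n u_{ii}}^2$ by $\sum_{i=2}^n u_{ii}^2\geq\tfrac{1}{n-1}\ton{\sum_{i=2}^n u_{ii}}^2\geq\tfrac{1}{m-1}\ton{\sum_{i=2}^n u_{ii}}^2$, the last inequality holding because $m\geq n$; this reaches the same conclusion without introducing $g$.

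There is no genuine obstacle here: the content is entirely the algebraic identity for $g$ and the sign of $g'$ it exhibits. The only point that requires a word of care is that $\tfrac{n}{n-1}$ and $\tfrac{m}{m-1}$ must make sense, i.e.\ $m>1$, which is automatic since $m\geq n\geq 2$ in the setting of Lemma~\ref{lemma_n}.
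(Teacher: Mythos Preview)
Your proof is correct and is essentially the paper's own argument: the paper computes the difference $g(n)-g(m)=\ton{\tfrac{1}{n-1}-\tfrac{1}{m-1}}(a-b)^2\geq 0$ directly, which is exactly what your closed form $g(t)=b^2+\tfrac{(a-b)^2}{t-1}$ yields. Your alternative of rerunning the proof of Lemma~\ref{lemma_n} with the weaker Cauchy--Schwarz inequality is also fine, though the paper does not take that route.
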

\begin{proof}
The proof follows directly from the fact that for every $x,y\in \R$
\begin{gather*}
\frac{x^2}{n} + \frac {n}{n-1}\ton{\frac x n - y}^2 -\ton{\frac{x^2}{m} + \frac {m}{m-1}\ton{\frac x m - y}^2 }=\\
=\ton{\frac 1 {n-1} - \frac 1 {m-1}}(x-y)^2\geq 0 \ .
\end{gather*}
\end{proof}

We close this section with the following computation that will be useful in the proof of the main theorem
\begin{lemma}\label{lemma_pu}
Let $x\in M$ and $U$ be a domain containing $x$. If $\phi:\R\to \R$ is a function of class $C^2$ is a neighborhood of $u(x)$ and $\nabla u|_x\neq 0$, then at $x$ we have
\begin{gather*}
 P^{II} _u (\phi(u))=\dot \phi (u) \Delta_p u + (p-1)\ddot \phi (u) \abs{\nabla u}^p \ .
\end{gather*}

\end{lemma}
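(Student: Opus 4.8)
The plan is to verify the identity by a direct computation: expand $P^{II}_u$ from its definition \eqref{deph_pu} and feed in the first and second derivatives of $\eta\equiv\phi(u)$ obtained via the chain rule. Everything is evaluated at the fixed point $x$, where the hypothesis $\nabla u|_x\neq 0$ guarantees that $u$ is smooth near $x$ and that $A_u$ is well-defined.

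First I would record, near $x$,
\[
\nabla_i\eta=\dot\phi(u)\,\nabla_i u,\qquad
\nabla_i\nabla_j\eta=\ddot\phi(u)\,\nabla_i u\,\nabla_j u+\dot\phi(u)\,u_{ij}\ .
\]
Tracing gives $\Delta\eta=\ddot\phi(u)\,\abs{\nabla u}^2+\dot\phi(u)\,\Delta u$, and contracting $\nabla_i\nabla_j\eta$ twice with $\nabla u$ gives
\[
\pst{\nabla u}{H_\eta}{\nabla u}=\ddot\phi(u)\,\abs{\nabla u}^4+\dot\phi(u)\,A_u\,\abs{\nabla u}^2\ .
\]
Substituting these two expressions into $P^{II}_u(\eta)=\abs{\nabla u}^{p-2}\Delta\eta+(p-2)\abs{\nabla u}^{p-4}\pst{\nabla u}{H_\eta}{\nabla u}$, the terms carrying $\ddot\phi(u)$ add up to $\bigl(1+(p-2)\bigr)\abs{\nabla u}^p\ddot\phi(u)=(p-1)\abs{\nabla u}^p\ddot\phi(u)$, while the terms carrying $\dot\phi(u)$ collapse to $\dot\phi(u)\,\abs{\nabla u}^{p-2}\bigl(\Delta u+(p-2)A_u\bigr)$. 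Finally, since $\Delta_p u=\dive{\abs{\nabla u}^{p-2}\nabla u}=\abs{\nabla u}^{p-2}\bigl(\Delta u+(p-2)A_u\bigr)$ (the identity already used in the proof of Lemma \ref{lemma_n}), this last bracket equals $\dot\phi(u)\,\Delta_p u$, and the claimed formula follows.

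There is no real obstacle here: the statement is purely a bookkeeping exercise, and the only hypotheses actually used are $\nabla u|_x\neq 0$ (so that the operator is defined and $u$ is locally smooth) and $\phi\in C^2$ near $u(x)$ (so that $\ddot\phi(u)$ makes sense); no regularity of $u$ beyond what is automatic is needed. As a sanity check, $\phi=\mathrm{id}$ recovers $P^{II}_u(u)=\Delta_p u$, consistently with the remark after \eqref{deph_pu}; the lemma will be applied later to the specific choices of $\phi$ arising from the one-dimensional model functions.
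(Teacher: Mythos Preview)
Your proof is correct and is exactly the straightforward chain-rule computation the paper has in mind; in fact the paper states this lemma without proof, treating it as an immediate calculation. Your sanity check with $\phi=\mathrm{id}$ is a nice touch.
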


\section{Gradient comparison}
In this section we prove a gradient comparison theorem that will be the essential tool to prove our main theorem. Since the proof is almost the same, we prove the theorem on a manifold without boundary, and then state the version for convex boundary and Neumann boundary conditions pointing out where the proof is different. To complete the proof, we will need some technical lemmas which, for the sake of clarity, will be postponed to the next section.
\begin{teo}[\textsc{Gradient comparison theorem}]\label{grad_est}
 Let $M$ be an $n$-dimensional compact Riemannian manifold without boundary, $u$ be a eigenfunction of the $p$-Laplacian belonging to the eigenvalue $\lambda$, and let $w$ be a solution on $(0,\infty)$ of the one dimensional ODE
\begin{gather}\label{eq_ode1}
\begin{cases}
 \frac d {dt} \dot w ^{(p-1)} - T \dot w ^{(p-1)} +\lambda w^{(p-1)}=0\\
 w(a)=-1 \quad \quad \dot w (a)=0
\end{cases}
\end{gather}
where $T$ can be either $-\frac{n-1} x$ or $T=0$, and $a\geq0$. Let $b(a)>a$ be the first point such that $\dot w (b)=0$ (so that $\dot w >0$ on $(a,b)$). If $[\min(u),\max(u)]\subset [-1,w(b)=\max(w)]$, then for all $x\in M$
\begin{gather*}
\abs{\nabla u (x)}\leq \dot w|_{w^{-1}(u(x))} \ .
\end{gather*}
\end{teo}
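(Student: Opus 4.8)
The plan is to run a maximum-principle argument on a suitably regularized version of the ratio that measures how far $u$ is from satisfying the comparison. Following the Bakry--Qian strategy adapted to the $p$-Laplacian, I would not work directly with $w$ but with its ``angle'' or ``\pf{}'' variable, so that $w$ and $\dot w$ are encoded by a single monotone function; concretely, since $w$ solves \eqref{eq_ode1} with $w(a)=-1$, $\dot w(a)=0$ and $\dot w>0$ on $(a,b)$, the map $w:(a,b)\to(-1,\max w)$ is a diffeomorphism and $\dot w$ can be regarded as a function of $w$. The goal is the pointwise inequality $\abs{\nabla u(x)}\le \dot w\big|_{w^{-1}(u(x))}$. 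To make the maximum principle available I would introduce a parameter and consider, for $c$ slightly larger than $1$, the modified model $w_c$ obtained by solving the same ODE but arranging $[\min u,\max u]$ to sit strictly inside the open interval $(-c,\max w_c)$; this guarantees $\dot w_c>0$ strictly on the relevant range, hence $1/\dot w_c$ is bounded and smooth there, and one recovers the theorem by letting $c\downarrow 1$.

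The core step is the following. Define on $M$ (away from the critical set of $u$, where $u$ is smooth) the function
\begin{gather*}
 F \equiv \abs{\nabla u}^p - \big(\dot w_c\big)^p\big|_{w_c^{-1}(u)} \ .
\end{gather*}
I want to show $F\le 0$. Suppose not; then $F$ attains a positive maximum at some point $x_0$. First one checks $x_0$ is not a critical point of $u$: where $\nabla u=0$ the first term vanishes while the second is $\ge 0$, so $F\le 0$ there. Hence near $x_0$ everything is smooth and the linearized operator $P^{II}_u$ is strictly elliptic. At $x_0$ we have $\nabla F=0$ and, by ellipticity of $P^{II}_u$, $P^{II}_u(F)\le 0$. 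Now expand: $P^{II}_u(\abs{\nabla u}^p)$ is estimated from below by Corollary \ref{cor_est_pu} (or Corollary \ref{cor_m} if one wants to allow $m\ge n$), which brings in $\Ric\ge 0$ through the dimensional term; and $P^{II}_u\big((\dot w_c)^p\circ w_c^{-1}\circ u\big)$ is computed exactly by Lemma \ref{lemma_pu}, since $(\dot w_c)^p\circ w_c^{-1}$ is a $C^2$ function of the real variable $u$ on the range in question. Using $\Delta_p u=-\lambda u^{(p-1)}$, the ODE \eqref{eq_ode1} satisfied by $w_c$, and the vanishing-gradient relation $\nabla F=0$ at $x_0$ (which lets one solve for $A_u$ at $x_0$ in terms of $\abs{\nabla u}$ and $w_c$-data), the inequality $P^{II}_u(F)\le 0$ should reduce, after algebra, to an inequality in the single variable $F(x_0)\ge 0$ that is violated — forcing $F(x_0)\le 0$, a contradiction. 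Finally, letting $c\to 1$ upgrades $F\le 0$ to the stated sharp comparison, and a short continuity/limiting argument handles the critical set of $u$.

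A few structural points deserve care in the write-up. One must verify the condition $[\min u,\max u]\subset[-1,\max w]$ is genuinely used to place the image of $u$ inside the monotonicity interval of $w$, and that the modified models $w_c$ can be chosen so that the analogous containment is strict; this is where the technical lemmas from the next section (on the behaviour of solutions of \eqref{eq_ode1}, the location of $b(a)$, and monotone dependence on $a$ and on $c$) enter. One should also keep track of the two cases $T=-\tfrac{n-1}{x}$ versus $T=0$: the first is the genuinely $n$-dimensional model and uses the full strength of Corollary \ref{cor_est_pu}, while $T=0$ is the limiting one-dimensional model; the Bochner-type estimate has to be applied with the matching effective dimension.

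The main obstacle I expect is the algebraic reduction at $x_0$: after substituting the Bochner lower bound, Lemma \ref{lemma_pu}, the eigenvalue equation, the ODE for $w_c$, and the critical-point condition $\nabla F=0$, one must show the resulting expression is strictly negative whenever $F(x_0)>0$. This is a one-variable inequality but it mixes $\abs{\nabla u}$, $u$, $A_u$, and $\dot w_c$, $\ddot w_c$ evaluated at $w_c^{-1}(u)$; organizing it so that the $\Ric\ge 0$ and curvature-dimension contributions (the terms with $1/(n-1)$) exactly absorb the ``bad'' cross terms, and confirming that no spurious sign appears when $p<2$ versus $p>2$ (because of the $(p-2)$ factors throughout $P^{II}_u$ and the Bochner formula), is the delicate part. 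The secondary obstacle is the regularity/limiting step: ensuring the maximum-principle argument is legitimate despite $u$ being only $C^{1,\alpha}$ globally and $\sin_p$-type models being only $C^{1,\alpha}$ at their critical points, which is handled by the strict-ellipticity away from $\{\nabla u=0\}$ together with the $c\downarrow 1$ passage.
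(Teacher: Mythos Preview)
Your overall strategy matches the paper's: a maximum-principle argument using the linearized operator $P_u^{II}$ at a maximum of a comparison function, with the $p$-Bochner inequality (Corollary~\ref{cor_est_pu}) supplying the lower bound on $P_u^{II}(\abs{\nabla u}^p)$ and Lemma~\ref{lemma_pu} handling the composed term. The regularization step (forcing $[\min u,\max u]$ strictly inside the monotonicity range of the model) is also right; the paper does it by scaling $u$ by $\xi<1$ rather than by perturbing $w$, but this is cosmetic.

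There is, however, a genuine gap. You propose to work with $F=\abs{\nabla u}^p-\phi(u)$ directly, where $\phi=\dot w^p\circ w^{-1}$. The paper explicitly flags this as the difficulty and instead uses the weighted quantity
\[
F=\psi(u)\bigl[\abs{\nabla u}^p-\phi(u)\bigr]
\]
with a carefully chosen positive $C^2$ function $\psi$. After substituting the Bochner lower bound, Lemma~\ref{lemma_pu}, the eigenvalue equation, and the critical-point relation $\nabla F=0$, one obtains at $x_m$ an inequality $a_1F^2+a_2F+a_3\le 0$, where the ODE \eqref{eq_ode1} for $w$ is precisely what makes $a_3=0$. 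With $\psi\equiv 1$ one has $a_1=0$, so everything rests on the sign of $a_2=-\lambda p(p-1)\abs{u}^{p-2}-(p-1)\ddot\phi$. But a direct computation in the case $T=0$ gives $\ddot\phi=-p\lambda\abs{w}^{p-2}$ and hence $a_2\equiv 0$: the inequality degenerates to $0\le 0$ and yields nothing. For $T=-\tfrac{n-1}{t}$ the coefficient $a_2$ does not have a definite sign either. So the ``algebraic reduction'' you anticipate as the main obstacle cannot close without an extra degree of freedom.

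The role of $\psi$ is exactly to create that freedom: one chooses it so that both $a_1>0$ and $a_2>0$, whence $F(a_1F+a_2)\le 0$ forces $F\le 0$ at the maximum. Producing such a $\psi$ is not a matter of organizing signs but requires solving an auxiliary differential inequality. Writing $\psi=e^{\int h}$ and $f(t)=-h(w(t))\dot w(t)$, the positivity of $a_1$ and $a_2$ translates into $\dot f<\min\{\eta(f,t),\beta(f,t)\}$ for two explicit functions $\eta,\beta$; this is the content of Lemma~\ref{lemma_etabeta}, whose proof occupies most of the next section and is itself delicate (a comparison argument for $f$ against a barrier $y_1$, with a separate treatment near $t_0$ for $p\le 2$ versus $p>2$). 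This weight-function construction is the missing idea in your proposal.
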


\begin{rem}
 \rm{The differential equation \eqref{eq_ode1} and its solutions will be studied in the following section, in particular we will prove existence and continuous dependence on the parameters for any $a\geq 0$ and the oscillatory behaviour of the solutions. Moreover, the solution always belongs to the class $C^1(0,\infty)$. 

For the sake of simplicity, we will use the following notational convention. For finite values of $a$, $w$ will be the solution to the ODE \eqref{eq_ode1} with $T=-\frac{n-1} x$, while $a=\infty$ will indicate the solution of the same ODE with $T=0$ and any $a$ as initial condition. Recall that in this latter case all the solutions are invariant under translations, so the conclusions of the theorem do not change if the starting point of the solution $w$ is changed.}
\end{rem}


\begin{proof}
First of all, in order to avoid problems at the boundary of $[a,b]$, we assume that \[[\min\{u\},\max\{u\}]\subset (-1,w(b))\, ,\] so that we only have to study our 1d-model on compact subintervals of $(a,b)$. We can obtain this by multiplying $u$ by a constant $\xi<1$. If we let $\xi\to 1$, then the original statement is proved.

Consider the function defined on the manifold $M$
\begin{gather*}
 F\equiv \psi(u)\qua{\abs{\nabla u}^p -\phi(u)} \ ,
\end{gather*}
where $\psi:\R\to \R$ is a generic positive $C^2$ functions on $M$ which will be specified later, and $\phi(u(x))=\dot w^{p} |_{u(x)}$. We want to prove that $F\leq 0$ on all of $M$.

Note that we introduced the function $\psi$ in the definition of $F$ since it is not easy to prove that $\abs{\nabla u}^p-\phi(u)\leq 0$ directly.


Let $x_m$ be a point of maximum for $F$ on $M$. If $\nabla u|_{x_m}=0$, there is nothing to prove. If also $u(x_m)\neq 0$, then $u$ is smooth around $x_m$, but if $u(x_m)=0$, then $u$ has only $C^{2,\alpha}$ regularity around $x_m$ for $1<p<2$, and $C^{3,\alpha}$ if $p\geq 2$.

In the following, we will assume that $u$ is a $C^3$ function around $x_m$, and we will explain in Remark \ref{rem_reg} how to modify the proof if this is not the case (in particular, if $1<p<2$ and $u(x_m)=0$).

Since $P_u$ is an elliptic operator at $x_m$ we have
\begin{gather*}
 \nabla F|_{x_m}=0 \ \ \ \ P_u^{II} F|_{x_m}\leq 0 \ .
\end{gather*}
The first equation above implies
\begin{gather}\label{eq_nablaf}
\nabla \qua{\abs{\nabla u}^p - \phi(u)}= -\frac{F \dot \psi}{\psi^2} \nabla u \ ,\\
\notag \abs{\nabla u}^{p-2} A_u\equiv \abs{\nabla u}^{p-2} \frac{\pst{\nabla u}{H_u}{\nabla u}}{\abs{\nabla u}^2}= -\frac{1}{p} \ton{\frac{\dot\psi}{\psi^2} F - \dot \phi} \ .
\end{gather}
In order to study the second inequality, note that
\begin{gather*}
 \nabla_i \nabla_j F = \ddot \psi [\abs{\nabla u}^p-\phi(u)]\nabla_i u \nabla_j u + \dot \psi [\abs{\nabla u}^p-\phi(u)] \nabla_i\nabla_j u+\\
+ \dot \psi \nabla_j[\abs{\nabla u}^p-\phi(u)] \nabla _i u + \dot \psi \nabla_i[\abs{\nabla u}^p-\phi(u)] \nabla _j u+\\
+ \psi\qua{\nabla_i \nabla_j \abs{\nabla u}^p - \nabla_i \nabla_j \phi(u)} \ .
\end{gather*}
Using equation \eqref{eq_nablaf}, we have at $x_m$ 
\begin{gather*}
 \nabla_i \nabla_j F = \ddot \psi [\abs{\nabla u}^p-\phi(u)]\nabla_i u \nabla_j u + \dot \psi [\abs{\nabla u}^p-\phi(u)] \nabla_i\nabla_j u+\\
- 2\frac{F\dot \psi^2}{\psi^2}\nabla_j u\nabla _i u + \psi\qua{\nabla_i \nabla_j \abs{\nabla u}^p - \nabla_i \nabla_j \phi(u)} \ .
\end{gather*}
By a straightforward calculation
\begin{gather*}
 \qua{\abs{\nabla u}^{p-2}\delta^{ij} +(p-2)\abs{\nabla u}^{p-4}\nabla^i u \nabla^j u }\nabla_i u \nabla_j u=(p-1)\abs{\nabla u}^p \ ,
\end{gather*}
and applying the definition of $P^{II}_u$ (see equation \eqref{deph_pu}), we get
\begin{gather*}
 0\geq P^{II}_u (F)=-2(p-1) F \frac{\dot \psi^2}{\psi^2} \ton{\frac{F}{\psi}+ \phi(u)}- F \frac{\dot \psi}{\psi} \lambda u^{(p-1)}+\\
+ (p-1) \frac{\ddot \psi}{\psi}F \ton{\frac F {\psi} + \phi(u)} + p \psi \ton{\frac 1 p P^{II}_u \abs{\nabla u }^p}-\psi P^{II}_u (\phi) \ ,
\end{gather*}
and using Corollary \ref{cor_est_pu} and Lemma \ref{lemma_pu} we obtain the following relation valid at $x_m$
\begin{gather*}
a_1 F^2 +a_2 F +a_3\leq 0 \ ,
\end{gather*}
where
\begin{gather}\label{eq_c}
a_3=p\psi\qua{\frac{\lambda^2 u^{2p-2}}{n-1} + \frac{n+1}{n-1}\frac{p-1}{p}\lambda u^{(p-1)} \dot \phi +\right.\\
+ \left.\frac{n (p-1)^2}{p^2(n-1)} \dot \phi^2 -\lambda (p-1)\phi \abs u^{p-2}-\frac{p-1}p \phi \ddot \phi}\notag \ ,
\end{gather}
\begin{gather}\label{eq_b}
a_2= -(p-1)\frac{\dot \psi}{\psi} \lambda u^{(p-1)} \frac{n+1}{n-1}-\lambda p(p-1) \abs u^{p-2} +\\
- \frac{2n(p-1)^2}{p(n-1)}\frac{\dot \psi}{\psi} \dot \phi -(p-1) \ddot \phi +(p-1)\phi\ton{\frac{\ddot\psi}{\psi}-2 \frac{\dot\psi^2}{\psi^2}} \notag \ ,
\end{gather}
\begin{gather}\label{eq_a}
 a_1=\frac{p-1}{\psi}\qua{\frac{\ddot \psi}{\psi}+ \frac{\dot \psi^2}{\psi^2}\ton{\frac{n(p-1)}{p(n-1)}-2}} \ .
\end{gather}
Note that here both $\psi$ and $\phi$ are defined as functions of $u(x)$.\\
Now we want to have two smooth positive functions $\psi$ and $\phi$ such that $a_3=0$ and $a_1$ and $a_2$ are strictly positive everywhere, so that
\begin{gather*}
 F(a_1 F+a_2)\leq 0
\end{gather*}
and necessarily $F$ is nonpositive at its point of maximum, so it is nonpositive on the whole manifold $M$.
\paragraph{\textsc{Coefficient $a_3$}}
Since $a_3$ is a function of $u(x)$, we can eliminate this dependence and rewrite $a_3$ as $a_3\circ u^{-1}$:
\begin{gather}\label{eq_cx}
a_3(s)=p\psi\qua{\frac{\lambda^2 \abs s^{2p-2}}{n-1} + \frac{n+1}{n-1}\frac{p-1}{p}\lambda s^{(p-1)} \dot \phi +\right.\\
+ \left.\frac{n (p-1)^2}{p^2(n-1)} \dot \phi^2 -\lambda (p-1)\phi \abs s^{p-2}-\frac{p-1}p \phi \ddot \phi}\notag \ ,
\end{gather}
where $s\in [-\xi,\xi\max\{u\}]$. Recall that $\phi=\dot w^p |_{w^{-1}(s)}$, so computing its derivatives it is important not to forget the derivative of $w^{-1}$, in particular
\begin{gather*}
 \dot \phi = p \abs{\dot w }^{p-2} \ddot w \ .
\end{gather*}
Remember that for a function of one variable, the $p$-Laplacian is $$\Delta_p w \equiv (p-1) \dot w^{p-2}\ddot w$$ so that we have
\begin{gather}\label{eq_phi}
 \frac{p-1}{p}\dot \phi = \Delta_p w \ \  ; \ \ \ \ \quad \frac{p-1} p \ddot \phi = \frac {d(\Delta_p w )} {dt} \frac{1}{\dot w} \ .
\end{gather}
With these substitutions, $a_3$ (or better $a_3\circ w:[w^{-1}(-\xi),w^{-1}(\xi \max\{u\})]\to \R$) can be written as
\begin{gather*}
 \frac{a_3}{p \psi}= \frac{n+1}{n-1} \lambda w^{(p-1)} \Delta_pw - \lambda(p-1) \abs w^{p-2} \dot w^p + \\
+\frac{\lambda^2\abs w^{2p-2}}{n-1}-\dot w^{p-1}\frac{d(\Delta_pw)}{dt}+ \frac{n}{n-1}(\Delta_pw)^2 \ .
\end{gather*}
Let $T$ be a solution to the ODE $\dot T=T^2/(n-1)$, i.e. either $T=0$ or $T=-\frac{n-1}{t}$ (note that from our point of view, there's no difference between $\frac{n-1}{t}$ and $\frac{n-1}{c+t}$, it's only a matter of shifting the variable $t$). A simple calculation shows that we can rewrite the last equation as
\begin{gather*}
\frac{a_3}{p\psi}=\frac{1}{n-1}\ton{\Delta_p w - T \dot w^{(p-1)} +\lambda w^{(p-1)}}\ton{n\Delta_p w + T \dot w^{(p-1)}+\lambda w^{(p-1)}}+\\
- \dot w ^{(p-1)}\frac d {dx} \ton{\Delta_p w - T \dot w^{(p-1)} +\lambda w^{(p-1)}} \ .
\end{gather*}
This shows that if our one dimensional model satisfies the ordinary differential equation
\begin{gather}\label{eq_1d}
 \Delta_p w = T\dot w ^{(p-1)}-\lambda w^{(p-1)} \ ,
\end{gather}
then $a_3=0$. Note that intuitively equation \eqref{eq_1d} is a sort of damped (if $T\neq 0$) $p$-harmonic oscillator. Remember that we are interested only in the solution on an interval where $\dot w >0$.

\vspace{0.5cm}
\paragraph{\textsc{Coefficients $a_1$ and $a_2$}}\label{sec_psi}
To complete the proof, we only need to find a strictly positive \[\psi \in C^2[-\xi,\xi\max\{u\}]\] such that both $a_1(u)$ and $a_2(u)$ are positive on all $M$. The proof is a bit technical, and relies on some properties of the model function $w$ that will be studied in the following section.

In order to find such a function, we use a technique similar to the one described in \cite[pag. 133-134]{new}. First of all, set by definition
\begin{gather}\label{ref_c}
X\equiv \lambda^{\frac{1}{p-1}} \frac {w(t)}{\dot w (t)}\ \ \ \ \  \psi(s)\equiv e^{\int h(s)} \ \ \  \ f(t)\equiv -h(w(t)) \dot w(t) \ ,
\end{gather}
so that
\begin{gather*}
 \dot f = - \dot h|_{w} \dot w ^2 - h|_{w} \ddot w = -\dot h|_{w} \dot w^2 - \frac {h|_{w}\dot w} {p-1} [T-X^{(p-1)}]=\\
=-\dot h|_{w} \dot w^2 + \frac {f} {p-1} [T-X^{(p-1)}] \ .
\end{gather*}
From equation \eqref{eq_a}, with our new definitions we have that
\begin{gather}\label{ref_d}
 \frac{a_1(w(t)) \psi(w(t))}{p-1}\dot w|_t ^2=
+\frac{f}{p-1}\qua{T-X^{(p-1)}} +f^2 \ton{\frac{p-n}{p(n-1)}}-\dot f\equiv \eta(f)-\dot f \ ,
\end{gather}
while if we use equations \eqref{eq_phi} and the differential equation \eqref{eq_1d} in equation \eqref{eq_b}, simple algebraic manipulations give
\begin{gather*}
\frac{a_2}{(p-1) \dot w^{p-2}} =-\frac{pT}{p-1} \ton{\frac{n}{n-1} T - X^{(p-1)}}+\\
-f^2 + f \qua{\ton{\frac{2n}{n-1} + \frac 1 {p-1}} T - \frac{p}{p-1} X^{(p-1)}} - \dot f\equiv\\
\equiv\beta(f) -\dot f \ .
\end{gather*}
The proof of the theorem now follows from Lemma \ref{lemma_etabeta}.

\end{proof}

Analyzing the case with boundary, the only difference in the proof of the gradient comparison is that the point $x_m$ may lie in the boundary of $M$, and so it is not immediate to conclude $\nabla F|_{x_m}=0$. However, once this is proved it is evident that $P^{II}_u F|_{x_m}\leq 0$ and the rest of proof proceeds as before. In order to prove that $x_m$ is actually a stationary point for $F$, the (nonstrict) convexity of the boundary is crucial. In fact we have
\begin{lemma}
 Let $M$ be as in Theorem \ref{grad_est}, but allow $M$ to have a (nonstrictly) convex $C^2$ boundary, and let $\Delta_p$ be the $p$-Laplacian with Neumann boundary conditions. Then, using the notation introduced above, if $\nabla u|_{x_m}\neq 0$
\begin{gather}
 \nabla F|_{x_m}=0 \ .
\end{gather}
\end{lemma}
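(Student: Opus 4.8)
The plan is to show that the boundary point $x_m$ must be a critical point of $F$ by combining the Neumann condition on $u$ with the convexity of $\partial M$. Since $x_m\in\partial M$ is an interior maximum of $F$ along $\partial M$ but only a one-sided maximum transversally, the only information the maximum principle gives for free is that the tangential gradient of $F$ vanishes and that the outward normal derivative satisfies $\ps{\nabla F}{\hat n}\geq 0$ at $x_m$. So it suffices to prove $\ps{\nabla F}{\hat n}|_{x_m}\leq 0$; then $\nabla F|_{x_m}$ has vanishing tangential part and nonpositive-and-nonnegative normal part, hence is zero.

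First I would differentiate $F=\psi(u)[\abs{\nabla u}^p-\phi(u)]$ in the normal direction. Since $\psi,\phi$ are functions of $u$ and the Neumann condition gives $\ps{\nabla u}{\hat n}=0$ on $\partial M$, the terms involving $\dot\psi\ps{\nabla u}{\hat n}$ and $\dot\phi\ps{\nabla u}{\hat n}$ drop out, leaving
\begin{gather*}
 \ps{\nabla F}{\hat n}|_{x_m}=\psi(u)\,\ps{\nabla \abs{\nabla u}^p}{\hat n}=p\,\psi(u)\,\abs{\nabla u}^{p-2}\,H_u(\nabla u,\hat n)\,.
\end{gather*}
Since $\psi>0$ and $\abs{\nabla u}\neq 0$ at $x_m$, everything reduces to showing $H_u(\nabla u,\hat n)\leq 0$ at $x_m$. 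Now the key point: the Neumann condition $\ps{\nabla u}{\hat n}=0$ holds on all of $\partial M$, so $\nabla u$ is tangent to $\partial M$ along $\partial M$; differentiating the identity $\ps{\nabla u}{\hat n}=0$ along $\partial M$ in the direction $\nabla u$ (which is itself a tangent direction) and using the definition of the second fundamental form $\mathrm{II}$ of $\partial M$, one gets $H_u(\nabla u,\hat n)=-\mathrm{II}(\nabla u,\nabla u)$. Convexity of $\partial M$ means $\mathrm{II}\geq 0$ (with the outward normal convention), so $H_u(\nabla u,\hat n)=-\mathrm{II}(\nabla u,\nabla u)\leq 0$, which is exactly what we need.

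I would then assemble these: $\ps{\nabla F}{\hat n}|_{x_m}=-p\,\psi(u)\,\abs{\nabla u}^{p-2}\,\mathrm{II}(\nabla u,\nabla u)\leq 0$, while the maximum property of $F$ at the boundary point $x_m$ forces $\ps{\nabla F}{\hat n}|_{x_m}\geq 0$; combined with the vanishing of the tangential derivative this yields $\nabla F|_{x_m}=0$. The main obstacle — really the only subtle step — is the identification $H_u(\nabla u,\hat n)=-\mathrm{II}(\nabla u,\nabla u)$: one must be careful about sign conventions for $\hat n$ (outward vs.\ inward) and for $\mathrm{II}$, and must justify that differentiating the Neumann identity is legitimate, i.e.\ that $u$ is $C^2$ up to the boundary near $x_m$ (which holds by elliptic regularity since $\nabla u|_{x_m}\neq 0$, the point where the argument is being run). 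Once the convention bookkeeping is done correctly, the rest is the short computation above, and the proof then proceeds exactly as in the boundaryless case with $P^{II}_u F|_{x_m}\leq 0$.
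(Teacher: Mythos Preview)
Your proof is correct and follows essentially the same approach as the paper: you reduce to showing $\ps{\nabla F}{\hat n}\leq 0$ at $x_m$, compute this normal derivative using the Neumann condition to kill the $\dot\psi,\dot\phi$ terms, and then identify $H_u(\nabla u,\hat n)=-\mathrm{II}(\nabla u,\nabla u)$ to conclude via convexity. The paper's argument is line-for-line the same, including the same expression for $\ps{\nabla F}{\hat n}$ and the same use of the second fundamental form.
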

\begin{proof}
We can assume that $x_m\in \partial M$, otherwise there would be nothing to prove. Let $\hat n$ be the outward normal derivative of $\partial M$.

Since $x_m$ is a point of maximum for $F$, we know that all the derivatives of $F$ along the boundary vanish, and that the normal derivative of $F$ is nonnegative
\begin{gather*}
 \ps{\nabla F}{\hat n}\geq 0 \ .
\end{gather*}
Neumann boundary conditions on $\Delta_p$ ensure that $\ps{\nabla u}{\hat n}=0$, and by direct calculation we have
\begin{gather*}
 \ps{\nabla F}{\hat n} = \qua{\ton{\abs{\nabla u}^p-\phi(u)}\dot \psi - \psi\dot \phi} \ps{\nabla u}{\hat n} +\\
+ p\psi(u)\abs{\nabla u}^{p-2} H_u(\nabla u,\hat n)=  p\psi(u)\abs{\nabla u}^{p-2} H_u(\nabla u,\hat n) \ .
\end{gather*}
Using the definition of second fundamental form $II(\cdot,\cdot)$, we can conclude
\begin{gather*}
 0\leq \ps{\nabla F}{\hat n} = p\psi(u)\abs{\nabla u}^{p-2} H_u(\nabla u,\hat n) = - p\psi(u)\abs{\nabla u}^{p-2} II(\nabla u,\nabla u)\leq 0 \ ,
\end{gather*}
and this proves the claim.
\end{proof}

Corollary \ref{cor_m} basically asserts that the fundamental estimate to prove the previous theorem is valid for any $n'\geq n$, so we can prove that
\begin{rem}\label{rem_m}
 The conclusions of Theorem \ref{grad_est} are still valid if we replace $n$ with any real $n'\geq n$.
\end{rem}
Note that while $n$ is the dimension of the Riemannian manifold under consideration, $n'$ does not represent any Riemannian entity.

\begin{rem}\label{rem_reg}
Before we end this section, we address the regularity issue in the gradient comparison theorem.
\end{rem}

 In the gradient comparison theorem, we assumed for simplicity $C^3$ regularity for $u$ around the point $x_m$. Since, as we have seen, we can assume without loss of generality that $\nabla u|_{x_m}\neq 0$, $C^3$ regularity is guaranteed if $p\geq 2$ or if $1<p<2$ and $u(x_m)\neq 0$.
 
 If we are in the case where $u(x_m)=0$ and $1<p<2$, $P^{II}_u(F)$ is not well defined. In particular, there are two terms diverging in this computation, one is $-\lambda \psi(u)\abs u ^{p-2} \abs{\nabla u}^{p}$ coming from $\psi(u)P^{II}_u (\abs{\nabla u}^2)$; the other is $-\psi(u) \ddot \phi(u) \abs{\nabla u}^p$, which comes from $-\psi(u) P^{II}_u (\phi(u))$.
 
 However, since $\nabla u|_{x}\neq 0$ in a neighborhood $U$ of $x_m$, we can still compute $P^{II}_u (F) $ on $U\setminus \{u=0\}$, which is an open dense set in $U$.
 
 If we assume that $\phi(u)=\dot w ^{p}|_{w^{-1}(u)}$, with $w$ satisfying \eqref{eq_1d}, it is easy to see that these two diverging terms precisely cancel each other in $U\setminus \{u=0\}$, and all the remaining terms in $P^{II}_u(F)$ are well-defined and continuous on $U$. Thus, even in this low-regularity case, the relation $P^{II}_u(F)|_{x_m}\leq 0$ is still valid, although the single terms $P^{II}_u(\abs{\nabla u}^p)$ and $P^{II}_u(\phi(u))$ are not well defined separately.

\section{One dimensional model}
This section contains the technical lemmas needed to study the properties of the solutions of the ODE
\begin{gather}\label{eq_1dm}
\begin{cases}
 \Delta_p w \equiv(p-1) \dot w^{p-2} \ddot w= T\dot w^{(p-1)} - \lambda w^{(p-1)}\\
w(a)=-1 \ \ \ \ \dot w(a)=0 
\end{cases}
\end{gather}
where either $T=-\frac{n-1}{t}$ or $T=0$. This second case has already been studied in Section \ref{sec_1d}, so we will concentrate on the first one.\\
To underline that this equation is to be considered on the real interval $[0,\infty)$ and not on the manifold $M$, we will denote by $t$ its independent variable. Notice that this ODE could be rewritten as
\begin{gather*}
 \frac d {dt} (t^{n-1} \dot w ^{(p-1)})+\lambda t^{n-1} w^{(p-1)}=0 \ ,
\end{gather*}
where $n\geq 2$ is the dimension of the manifold. Note that we define $u(x)=w(r(x))$ on $\R^n$, this equation characterizes the radial eigenfunction of the $p$-Laplacian. First of all we cite some known results on the solutions of this equation. 
\begin{teo}\label{teo_1dcont}
 If $a\geq0$, equation \eqref{eq_1dm} has always a unique solution which is of regularity $C^{1}(0,\infty)$ with $\dot w^{(p-1)} \in C^1(0,\infty)$, moreover if $a=0$ the solution belongs to $C^{0}[0,\infty)$. The solution depends continuously on the parameters in the sense of local uniform convergence of $w$ and $\dot w$ in $(0,\infty)$. Moreover every solution is oscillatory, meaning that there exists a sequence $t_k \to \infty$ such that $w(t_k)=0$.
\end{teo}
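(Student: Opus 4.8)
The plan is to obtain existence, uniqueness, regularity and continuous dependence from a contraction argument at the starting point $t=a$, and then to deduce global existence and oscillation from two a priori identities; the case $T=0$ is already explicit (the solutions are the $\sinp$-functions of Section~\ref{sec_1d}), so I would concentrate on $T=-\frac{n-1}{t}$. First I would recast \eqref{eq_1dm} in integrated form: since $\pdw(a)=0$, any solution must satisfy the fixed point equation
\[
 w(t)=-1+\int_a^t\ton{-\lambda\,\tau^{1-n}\int_a^\tau s^{n-1}\pw(s)\,ds}^{(1/(p-1))}d\tau\, ,
\]
where $y^{(1/(p-1))}\equiv\abs{y}^{(2-p)/(p-1)}y$ inverts $y\mapsto y^{(p-1)}$. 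On the ball $\cur{w:\norm{w+1}_\infty\le 1/2}$ in $C^0[a,a+\delta]$ the argument $w$ stays bounded away from $0$, so $\pw$ is Lipschitz in $w$; and since there $\pw\le -2^{1-p}<0$, the inner integral $I(\tau)$ is comparable to $\tau-a$ (to $\tau$ when $a=0$), in particular positive and bounded below. Because $y\mapsto y^{(1/(p-1))}$ is locally Lipschitz away from the origin, with Lipschitz constant $O\big((\tau-a)^{(2-p)/(p-1)}\big)$ along $\tau\mapsto I(\tau)$ and hence integrable near $\tau=a$ (the exponent $(2-p)/(p-1)$ exceeds $-1$ for every $p>1$), one checks that the right-hand side is a contraction once $\delta$ is small, with constants depending continuously on $a$ and $\lambda$. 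This gives the unique local $C^1$ solution together with its continuous dependence; the identity $t^{n-1}\pdw(t)=-\lambda\int_a^t s^{n-1}\pw\,ds$ then exhibits $\pdw\in C^1(0,\infty)$, so $\dot w\in C^0$, $w\in C^1$, and $w$ is smooth wherever $\dot w\neq 0$ by bootstrapping. When $a=0$ the boundary term at $0$ drops and the same equation (with lower limit $0$) yields $w\in C^0[0,\infty)$ with $\dot w(0)=0$. The same contraction applies near any later point where $\dot w$ vanishes — there $w\neq 0$, by the energy identity below — so uniqueness and continuous dependence propagate, by the usual continuation argument, along all of $(0,\infty)$.

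For global existence I would use the energy $E\equiv\abs{\dot w}^p+\frac{\lambda}{p-1}\abs{w}^p$: a one-line computation with \eqref{eq_1dm} gives
\[
 \dot E=\frac{p}{p-1}\,\dot w\qua{(p-1)\abs{\dot w}^{p-2}\ddot w+\lambda\pw}=-\frac{p(n-1)}{(p-1)t}\,\abs{\dot w}^p\le 0\, ,
\]
so $\abs{w}\le 1$ and $\abs{\dot w}\le(\lambda/(p-1))^{1/p}$ throughout. These uniform bounds rule out blow-up, so the solution lives on all of $(0,\infty)$, and continuous dependence passes from the local statement to all compact subintervals.

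For the oscillation I would pass to generalized Pr\"{u}fer coordinates, $w=\rho\sinp(\varphi)$ and $(\frac{p-1}{\lambda})^{1/p}\dot w=\rho\cosp(\varphi)$. This is legitimate because $E$ never vanishes (if it did at one point, $\dot E\le 0$ together with $E(a)=\frac{\lambda}{p-1}>0$ would force $E\equiv 0$, i.e.\ $w\equiv 0$); hence $(w,\dot w)$ stays off the origin and $\rho^p=\frac{p-1}{\lambda}E>0$. Using $\abs{\sinp}^p+\abs{\cosp}^p=1$ and $\frac{d}{dx}\cosp=-\sinp^{(p-1)}\abs{\cosp}^{2-p}$, a direct computation separates the system into
\[
 \dot\rho=-\frac{n-1}{(p-1)t}\,\abs{\cosp(\varphi)}^p\,\rho\, ,\qquad
 \dot\varphi=\ton{\frac{\lambda}{p-1}}^{1/p}+\frac{n-1}{(p-1)t}\,\cosp^{(p-1)}(\varphi)\,\sinp(\varphi)\, .
\]
Since $\abs{\cosp^{(p-1)}(\varphi)\,\sinp(\varphi)}\le 1$, for $t$ beyond some $t_*=t_*(n,p,\lambda)$ we get $\dot\varphi\ge\frac12(\lambda/(p-1))^{1/p}>0$, so $\varphi(t)\to+\infty$; as $\rho>0$, the zeros of $w$ are exactly the points where $\sinp(\varphi)=0$, i.e.\ $\varphi\in\pi_p\Z$, and $\varphi$ meets infinitely many of them, producing $t_k\to\infty$ with $w(t_k)=0$.

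I expect the oscillation step to be the main obstacle. It is invisible to the energy — along an oscillating solution $E$ typically decays to $0$ — so one genuinely needs the angular variable $\varphi$ and the observation that the $O(1/t)$ damping is too weak to keep $\varphi$ from growing linearly. The only delicate point in carrying this out is sign bookkeeping, both in $\frac{d}{dx}\cosp$ and in the odd powers $y\mapsto y^{(q)}$, throughout the Pr\"{u}fer computation; once that is handled, the separated system, and hence the conclusion, follow by elementary manipulations. (All of this is classical in the theory of half-linear ODEs; see \cite{dosly}.)
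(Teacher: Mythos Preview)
The paper does not prove this result itself; its proof consists entirely of citations to \cite{W} for existence, uniqueness and continuous dependence, and to \cite{kus_osc}, \cite{wong_osc}, \cite{dosly} for the oscillatory behaviour. Your proposal, by contrast, supplies a self-contained argument, and the ingredients you choose --- the integrated fixed-point equation for local well-posedness, the energy $E=\abs{\dot w}^p+\frac{\lambda}{p-1}\abs{w}^p$ for global a priori bounds, and the Pr\"{u}fer angle for oscillation --- are the standard ones in the half-linear theory. In fact the Pr\"{u}fer system you derive coincides with what the paper itself sets up a few lines later in \eqref{eq_pf}, though there it is used to compare $\delta(a)$ with $\pi_p/\alpha$ rather than to establish oscillation; the paper even shows $\dot\varphi\ge\alpha/n$ everywhere (not just for large $t$), which is stronger than what you need.

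One step needs repair. Your justification that $E$ never vanishes --- ``if it did at one point, $\dot E\le 0$ together with $E(a)=\frac{\lambda}{p-1}>0$ would force $E\equiv 0$'' --- is not a valid implication: a nonnegative nonincreasing function with positive initial value may perfectly well reach zero later. Nor can you simply invoke uniqueness at a point where $w=\dot w=0$, since that is precisely the doubly degenerate case your contraction does not cover. The fix is already implicit in your Pr\"{u}fer equations: on any interval where $E>0$ one has $\dot\rho/\rho\ge-\frac{n-1}{(p-1)t}$, which integrates to $\rho(t)\ge\rho(t_1)\ton{t_1/t}^{(n-1)/(p-1)}>0$ for $t>t_1>0$, so $\rho$ cannot reach zero in finite time; an open--closed argument then gives $E>0$ on all of $(0,\infty)$. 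With this correction your argument is complete.
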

\begin{proof}
Existence, uniqueness and continuity with respect to the initial data and parameters is proved for example in \cite[Theorem 3, pag 179]{W}, and its oscillatory behaviour has been proved in the $n>p$ in \cite[Theorem 3.2]{kus_osc}, or in \cite[Theorems 2.2.11 and 2.3.4(i)]{dosly}, while the $n\leq p$ case is treated for example in \cite[Theorem 2.1]{wong_osc} and \cite[Theorem 2.2.10]{dosly}. Note that all these reference deal with much more general equations than the one we are interested in.
\end{proof}

In the following we will be interested only in the restriction of the solution $w$ to the interval $[a,b(a)]$, where $b(a)>a$ is the first point where $\dot w(b)=0$. It is easily seen that $\dot w\geq 0$ on $[a,b(a)]$, with strict inequality in the interior of the interval. Set by definition $t_0$ to be the only point in $[a,b(a)]$ such that $w(t_0)=0$.

First of all, we state and prove the lemma needed to complete the gradient comparison. Fix $a$ and the corresponding solution $w$, and define for simplicity on $(a,b)$
\begin{gather*}
 X(t)\equiv {\lambda}^{\frac 1 {p-1}}\frac{w(t)}{\dot w(t)} \ \ \quad \ T(t)= -\frac{n-1}{t} \ .
\end{gather*}
By direct calculation
\begin{gather}\label{eq_dxp}
 \frac{d}{dx} X^{(p-1)} = (p-1) \lambda^{\frac{1}{p-1}}\abs X ^{p-2} - T X^{(p-1)} + \abs{X}^{2(p-1)}\\
\dot X = \lambda^{\frac{1}{p-1}} - \frac 1 {p-1} T X + \frac 1 {p-1} \abs{X}^p \ .
\end{gather}
\begin{lemma}\label{lemma_etabeta}
Let $\eta(s,t)$ and $\beta(s,t)$ be defined by:
\begin{gather*}
 \eta(s,t)=\frac{s}{p-1}\qua{T-X^{(p-1)}} +s^2 \ton{\frac{p-n}{p(n-1)}} \ ,\\
\beta(s,t)=-\frac{pT}{p-1} \ton{\frac{n}{n-1} T - X^{(p-1)}}-s^2 +\\
+ s\qua{\ton{\frac{2n}{n-1} + \frac 1 {p-1}} T - \frac{p}{p-1} X^{(p-1)}} \ .
\end{gather*}
For every $\epsilon>0$, there exists a function $f:[a+\epsilon,b(a)-\epsilon]\to \R$ such that
\begin{gather}\label{eq_f}
 \dot f < \min\{\eta(f(t),t),\beta(f(t),t)\}
\end{gather}
\end{lemma}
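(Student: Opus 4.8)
The plan is to construct the function $f$ explicitly by exploiting the two exact solutions of the associated Riccati-type equations that come built into the problem. Recall from \eqref{eq_dxp} that the function $X(t)=\lambda^{1/(p-1)}w/\dot w$ is itself a natural candidate: on $(a,b)$ it blows up to $-\infty$ near $a$ (where $\dot w=0$, $w=-1$) and to $+\infty$ near... no, in fact $X$ changes sign at $t_0$ and tends to $\pm\infty$ at the endpoints. The key observation I would try to make precise is that certain affine-in-$X^{(p-1)}$ combinations, e.g. $f=c\,X^{(p-1)}$ for a suitable constant $c$, or more generally $f=\alpha T+\gamma X^{(p-1)}$, make $\eta(f,t)-\dot f$ and $\beta(f,t)-\dot f$ computable in closed form using \eqref{eq_1dm} and \eqref{eq_dxp}. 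So the first step is: compute $\dot f$ for such an ansatz and plug into the definitions of $\eta$ and $\beta$ to see which choices of constants give $\dot f\le\eta(f,t)$ and $\dot f\le\beta(f,t)$, at least with equality in the limiting case.

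The second step is to handle the (likely) situation that no single closed-form $f$ gives a strict inequality on the whole interval, only a non-strict one. Here I would use a standard perturbation/continuity argument: if $f_0$ satisfies $\dot f_0\le\min\{\eta(f_0,t),\beta(f_0,t)\}$ with equality only at isolated points or on the boundary, then for the strict inequality on the compact subinterval $[a+\epsilon,b-\epsilon]$ I would either (i) solve the ODE $\dot f=\eta(f,t)-\delta$ with a well-chosen initial condition and show by comparison that the solution stays below the $\beta$-threshold too, shrinking $\delta>0$ as needed; or (ii) take $f=f_0-\delta g$ for a positive bump function $g$ and check that the correction terms have the right sign. The comparison-theorem approach (i) seems cleaner: one shows $\eta(\cdot,t)$ is, say, concave in its first argument (the coefficient $\frac{p-n}{p(n-1)}$ of $s^2$ is negative when $n>p$, positive when $n<p$, so some care with the sign of this coefficient is needed — possibly splitting into cases $n\le p$ and $n>p$, or invoking Corollary \ref{cor_m} to replace $n$ by a large $m$), and then existence of a subsolution/supersolution sandwiching the desired $f$ follows from ODE theory on the compact interval.

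The main obstacle I anticipate is precisely the endpoint behavior: as $t\to a^+$ and $t\to b^-$, $X^{(p-1)}\to\mp\infty$, so $\eta$ and $\beta$ have terms blowing up, and one must ensure the constructed $f$ neither blows up faster than allowed nor forces $\dot f$ to be too large near the endpoints. This is exactly why the statement is phrased on $[a+\epsilon,b-\epsilon]$ rather than the closed interval $[a,b]$ — so the construction only needs to work on compact subintervals, and the $\xi\to1$ limit in the gradient comparison proof absorbs the loss. Concretely I would: first establish the qualitative behavior of $X$, $T$, and their derivatives on $(a,b)$ (monotonicity of $T$, sign and blow-up rates of $X$, using the ODE \eqref{eq_1dm} and the fact that $t_0\in(a,b)$ is the unique zero of $w$); then verify the closed-form (in)equalities for the natural ansatz; then run the comparison argument on $[a+\epsilon,b-\epsilon]$ to upgrade to strict inequality.

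I would also remark that one should double-check the degenerate case $T=0$ (i.e. $a=\infty$): there $X$ satisfies the autonomous equation $\dot X=\lambda^{1/(p-1)}+\frac1{p-1}|X|^p$, everything is translation-invariant, and the construction simplifies considerably — this is really the $n=\infty$ analogue and can be treated as a limiting or parallel case. Having $f$ in hand, Lemma \ref{lemma_etabeta} is exactly what feeds into \eqref{ref_d} and the displayed formula for $a_2/((p-1)\dot w^{p-2})$ in the proof of Theorem \ref{grad_est}, forcing $a_1>0$ and $a_2>0$ and hence $F\le0$, which completes the gradient comparison.
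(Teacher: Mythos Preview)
Your proposal sketches the right \emph{shape} of argument but misses the one step that carries all the weight in the paper's proof. The paper does not build $f$ from a closed-form ansatz like $\alpha T+\gamma X^{(p-1)}$; instead it \emph{solves} the ODE
\[
\dot f=\min\{\eta(f,t),\beta(f,t)\},\qquad f(t_0)=\tfrac{p}{p-1}T(t_0),
\]
and the entire difficulty is showing this solution exists on all of $(a,b)$, i.e.\ does not blow up in the interior. Your option (i) is close in spirit, but you treat non-explosion as an endpoint issue absorbed by restricting to $[a+\epsilon,b-\epsilon]$. That is not enough: even on a compact subinterval the quadratic terms in $\eta$ and $\beta$ (with coefficients $\tfrac{p-n}{p(n-1)}$ and $-1$) allow finite-time blow-up of a solution started at $t_0$, and nothing in your outline rules this out.

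The paper's mechanism is a two-sided barrier. Since $\min\{\eta(s,t),\beta(s,t)\}\to-\infty$ as $s\to\pm\infty$, one direction of boundedness is free. For the other direction it introduces
\[
y_1(t)=\tfrac{p}{p-1}\Bigl(T-\tfrac{n-1}{n}X^{(p-1)}\Bigr),\qquad y_2(t)=\tfrac{p}{p-1}T,
\]
the two roots of $\eta(\cdot,t)=\beta(\cdot,t)$, and proves that $f>y_1$ on $(t_0,b)$ and $f<y_1$ on $(a,t_0)$. This in turn requires showing that whenever $f$ touches $y_1$ at some $t_1\neq t_0$, the quantity $\dot f(t_1)-\dot y_1(t_1)$ has a definite sign; the paper reduces this to positivity of an auxiliary function $\kappa(t)$ (equation \eqref{damnedH}) and then argues by contradiction using the explicit formula \eqref{eq_kappa} for $\dot\kappa$ at a zero of $\kappa$. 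This barrier/$\kappa$ analysis is the heart of the lemma and has no counterpart in your plan. Your affine-in-$X^{(p-1)}$ candidates are exactly $y_1,y_2$ --- but they serve as barriers for $f$, not as $f$ itself, and neither satisfies $\dot y_i\le\min\{\eta(y_i),\beta(y_i)\}$ globally (indeed the paper only establishes this for $y_1$ locally near $t_0$, with a separate argument for $p>2$). Once non-explosion is secured, the strict inequality on $[a+\epsilon,b-\epsilon]$ is obtained exactly as you suggest, by subtracting a small constant from the right-hand side and invoking ODE comparison.
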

\begin{proof}
We will prove that there exists a function $f:(a,b(a))\to \R$ which solves the ODE 
\begin{gather}\label{eq_fe}
 \begin{cases}
\dot f=\min\{\eta(f(t),t),\beta(f(t),t)\}\\
f(t_0)=\frac p {p-1} T_0 \ ,
 \end{cases}
\end{gather}
where we set $T_0=T(t_0)$. Then the lemma follows by considering the solution to
\begin{gather}\label{ref_e}
\begin{cases}
\dot f_\eta=\min\{\eta(f_\eta(t),t),\beta(f_\eta(t),t)\}-\eta\\
f_\eta(t_0)=\frac p {p-1} T(t_0)\equiv  \frac p {p-1} T_0 \ .
 \end{cases}
\end{gather}
Thanks to standard comparison theorems for ODE, if $\eta>0$ is small enough the solution $f_\eta$ is defined on $[a+\epsilon,b(a)-\epsilon]$ and satisfies the inequality \eqref{eq_fe}.

Consider that by Peano theorem there always exists a solution to \eqref{eq_f} defined in a neighborhood of $t_0$. We will show that this solution does not explode to infinity inside $(a,b)$, while we allow the solution to be infinity at the boundary of the interval. First of all note that for each $t\in (a,b(a))$
\begin{gather}
 \lim_{s\to \pm\infty} \min\{\eta(s,t),\beta(s,t)\}=-\infty \ .
\end{gather}
Then the solution $f$ is bounded from above in $(t_0,b)$ and bounded from below on $(a,t_0)$.

A simple calculation shows that
\begin{gather}\label{eq_dQ}
 \eta(f)-\beta(f)= \frac{p-1}p\frac{n}{n-1}(f-y_1)(f-y_2) \ ,
\end{gather}
where
\begin{gather*}
 y_1\equiv \frac{p}{p-1}\ton{T-\frac{n-1}n X^{(p-1)}} \ \ \ \ \ y_2\equiv \frac{p}{p-1} T \ .
\end{gather*}

Now we will prove that $f>y_1$ on $(t_0,b)$ and $f<y_1$ on $(a,t_0)$, and this will complete the proof of the lemma.
First we prove the inequality only in a neighborhood of $t_0$, i.e., we show that that there exists $\epsilon>0$ such that
\begin{gather}\label{eq_cfre}
 f(t)>y_1(t) \ \ \text{  for  } \ t_0<t<t_0+\epsilon \ ,\\
\notag f(t)<y_1(t) \ \ \text{  for  }\ t_0-\epsilon<t<t_0 \ .
\end{gather}
In fact, using the ODE \eqref{eq_f}, at $t_0$ we have
\begin{gather*}
 \dot f \vert_{t_0} = \frac{p}{(p-1)(n-1)} T_0^2 \ ,
\end{gather*}
while, where defined,
\begin{gather*}
\dot y_1 = \frac p {p-1} \qua{\frac{T^2}{n-1} - \lambda^{\frac 1 {p-1}}\frac{(n-1)(p-1)}{n} \abs{X}^{p-2} +\right.\\
+\left. \frac{n-1}{n} T X^{(p-1)} - \frac{n-1}{n} \abs{X}^{2(p-1)}} \ .
\end{gather*}
If $p=2$, $\dot f|_{t_0} -\dot y|_{t_0}>0 $, and if $p<2$
\begin{gather*}
 \lim_{t\to t_0} \dot f|_{t} -\dot y|_{t}=+\infty \ .
\end{gather*}
Thus, if $p\leq 2$, it is easy to conclude that \eqref{eq_cfre} holds. Unfortunately, if $p>2$, $y_1\in C^1((a,b))$ but $\dot f|_{t_0} -\dot y|_{t_0}=0$.

However, by equation \eqref{eq_dQ}, $\eta(y_1)=\beta(y_1)=\min\{\eta(y_1),\beta(y_1)\}$. In particular
\begin{gather}\label{ref_f}
 \eta(y_1)-\frac{p}{(p-1)(n-1)} T^2=-\frac{p(2p-1)}{(p-1)^2 n} T X^{(p-1)}+\\
\notag+ \frac{p^2(n-1)}{(p-1)^2 n^2}\abs X^{2(p-1)}=c_1 X^{(p-1)} + o(X^{(p-1)}) \ ,
\end{gather}
while
\begin{gather*}
 \dot y_1 -\frac{p}{(p-1)(n-1)} T^2 = -c_2 \abs X^{p-2}+ O(X^{(p-1)}) \ ,
\end{gather*}
where $c_2>0$. If follows that in a neighborhood of $t_0$, $y_1$ solves the differential inequality
\begin{gather*}
\begin{cases}
 \dot y_1\leq \min\{\eta(y_1),\beta(y_1)\}\\
y_1(t_0)=\frac{p}{p-1} T_0 
\end{cases}
\end{gather*}
and, applying a standard comparison theorem for ODE (see for example \cite[Theorem 4.1 in Chapter 3]{hart}), we can prove that the inequalities \eqref{eq_cfre} hold in a neighborhood of $t_0$.

To prove that they are valid on all $(a,b)$, suppose by contradiction that there exists some $t_1\in (a,t_0)$ such that $f(t_1)=y_1(t_1)$. The same argument works verbatim if $t_0< t_1 <b$.

Define $d(t)\equiv f(t)-y_1(t)$. By \eqref{eq_dQ}, $\dot f|_{t_1}=\eta(f(t_1),t_1)=\eta(y_1(t_1),t_1)$, which implies that
\begin{gather*}
 \dot d(t_1)= \frac{p(n-1)}{n} \lambda^{\frac 1 {p-1}}\abs X^{p-2}- \frac{p(n(p-1)+p)}{n(p-1)^2}T X^{(p-1)}+ \\
+\frac{(n-1)p(n(p-1)+p)}{n^2 (p-1)^2}\abs X^{2p-2}=\\
=p(n-1)\abs X^{p-2} \qua{\frac {\lambda^{\frac 1 {p-1}}} n+\frac{n(p-1)+p}{(p-1)^2 n^2}X\ton{X^{(p-1)}-\frac{n}{n-1}T}}\equiv \\
\equiv\frac{p(n-1)}{(p-1)^2 n^2}\abs X^{p-2}\kappa(t_1) \ .
\end{gather*}
where we set:
\begin{gather}\label{damnedH}
\kappa(x)\equiv n(p-1)^2\lambda^{\frac 1 {p-1}} + (n(p-1)+p)X\ton{X^{(p-1)}-\frac{n}{n-1}T} \ .
\end{gather}

Now we claim that $\kappa(t)$ is strictly positive for $t\neq t_0$, so that it is impossible for $d$ to be zero in a point different from $t_0$.

If $a>0$, it is evident that 
\begin{gather}
 \liminf_{t\to a,t_0,b} \kappa(t) >0 \ .
\end{gather}
If $a=0$ the same conclusion holds thanks to an approximation argument.

To show that $k(t)$ is positive everywhere, we argue by contradiction. Consider a point $z\in (t_0,b)$ where $\kappa(z)=0$ (the same argument works also if $z\in (a,t_0)$. At $z$ we have
\begin{gather*}
 X^{(p-1)}=-\frac{n(p-1)^2 \lambda^{\frac 1 {p-1}}}{(n(p-1)+p)X} + \frac{n}{n-1}T
\end{gather*}
and
\begin{gather*}
 \dot k = -\dot X \frac{n(p-1)^2 \lambda^{\frac 1 {p-1}}}{X}+ (n(p-1)+p)X\ton{\frac {d}{dt}X^{(p-1)}-\frac{nT^2}{(n-1)^2}} \ .
\end{gather*}
Using equation \eqref{eq_dxp} and some algebraic manipulations, we obtain
\begin{gather}\label{eq_kappa}
 \dot \kappa = -\frac{n (-1 + p)^2 p^2}{(n (-1 + p) + p) X}\lambda^{\frac{2}{p-1}} \ .
\end{gather}

This expression has a constant sign on $(t_0,b)$, and is never zero. For this reason, $z$ cannot be a minimum point for $k$, and so there exists a point $z'\in (z,b)$ such that $k(z')=0$ and $k(z)>0$ on $(z',b)$. Since $\dot k(z)$ and $\dot k(z')$ have the same sign, we have a contradiction.

\end{proof}

As will be clear later on, in order to obtain a sharp estimate on the first eigenvalue of the $p$-Laplacian we need to study the difference $\delta(a)=b(a)-a$ and find its minimum as a function of $a$. Note that if $T=0$, then the solution $w$ is invariant under translations and in particular $\delta(a)$ is constant and equal to $\frac{\pi_p}{\alpha}$, so we will restrict our study to the case $T\neq 0$. For ease of notation, we extend the definition of $\delta$ setting $\delta(\infty)=\frac{\pi_p}{\alpha}$.

In order to study the function $\delta(a)$, we introduce the \pf transformation (see \cite[section 1.1.3]{dosly} for a more detailed reference). Roughly speaking, the \pf transformation defines new variables $e$ and $\varphi$, which are the $p$-polar coordinates in the phase space of the solution $w$.

We set
\begin{gather}\label{ref_g}
  e(t)\equiv \ton{\dot w ^p + \alpha^p w^p}^{1/p}\ , \ \ \ \ \varphi(t)\equiv \operatorname{arctan_p} \ton{\frac{\alpha w}{\dot w}} \ .
\end{gather}
Recall that $\alpha= \ton{\frac{\lambda}{p-1}}^{1/p}$, so that
\begin{gather*}
 \alpha w = e \sinp (\varphi) \ , \ \ \dot w =e \cosp(\varphi) \ .
\end{gather*}
Differentiating, simplifying and using equation \eqref{eq_1dm}, we get the following differential equations for $\varphi$ and $e$
\begin{gather}\label{eq_pf}
 \dot \varphi = \alpha -\frac{T(t)}{p-1}\cosp^{p-1} (\varphi)\sinp(\varphi)=\alpha +\frac{n-1}{(p-1)t}\cosp^{p-1} (\varphi)\sinp(\varphi) \\
\frac{\dot e}{e} =\frac {T(t)}{p-1}\cosp^{p} (\varphi)=-\frac{n-1}{(p-1)t}\cosp^{p} (\varphi)\notag \ .
\end{gather}
Rewritten in this form, it is quite straightforward to prove existence, uniqueness and continuous dependence of the solutions of the ODE \eqref{eq_1dm} at least in the case $a>0$. Moreover, note that the derivative of $\varphi$ is strictly positive. Indeed, this is obviously true at the points $a, \ b(a)$ where $\dot w=0$ implies $\cosp(\varphi)=0$, while at the points where $\dot \varphi =0$ we have by substitution that $\ddot \varphi= \frac{\alpha} t $, which is always positive, so it is impossible that $\dot \varphi$ reaches zero. Moreover, a slight modification of this argument shows that $\dot \varphi$ is in fact bounded from below by $\frac{\alpha} n$. Indeed, consider by contradiction a point where $\dot \varphi=\frac{\alpha}{n} - \epsilon$, then
\begin{gather*}
 \ddot \varphi = \frac 1 t \ton{-\frac{n-1}{(p-1)t}\cosp^{p-1}(\varphi)\sinp(\varphi)+ \frac{n-1}{p-1}(1-p\abs{\sinp(\varphi)}^p)\dot \varphi}\geq \frac n t \epsilon \ .
\end{gather*}
Since $\dot \varphi(a)=\alpha$, it is evident that such a point cannot exist. This lower bound on $\dot \phi$ proves directly the oscillatory behaviour of the solutions of ODE \eqref{eq_1dm}.

Note that, for every solution, $e$ is decreasing (strictly if $T\neq 0$), which means that the absolute value of local maxima and minima decreases as $t$ increases.

Now we are ready to prove the following lemma
\begin{teo}\label{teo_delta_comp}
 For any $n>1$, the difference $\delta(a)$, which is a continuous function on $[0,+\infty)$, is always strictly greater than ${\pi_p}/\alpha$, i.e. the difference $\delta(a)$ in the case $T=0$. Moreover, let $m(a)\equiv w(b(a))$, then for every $a\in [0,\infty)$
\begin{gather*}
 \lim_{a\to \infty} \delta(a)=\frac{\pi_p}{\alpha} =\delta(\infty) \ , \ \ \ \ \ m(a)<1 \ , \ \ \ \ \lim_{a\to \infty} m(a)=1 \ .
\end{gather*}
\end{teo}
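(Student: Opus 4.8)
The plan is to run everything through the Prüfer variables $(e,\varphi)$ and the system \eqref{eq_pf}. On $[a,b(a)]$ we have $\dot w\ge 0$, with $w(a)=-1<0$ and $w(b(a))=m(a)$; since $\cosp(\varphi)$ vanishes exactly where $\dot w$ does while $\dot\varphi\ge\alpha/n>0$, the angle $\varphi$ increases strictly from $\varphi(a)=-\pi_p/2$ to $\varphi(b(a))=\pi_p/2$ (in particular $m(a)>0$, as $\sinp(\pi_p/2)=1$). Inverting $\varphi(t)$ we may write
\[
\delta(a)=\int_{-\pi_p/2}^{\pi_p/2}\frac{d\varphi}{\dot\varphi}\, ,\qquad \dot\varphi=\alpha+g(\varphi,t),\quad g(\varphi,t)=\tfrac{n-1}{(p-1)t}\cosp^{p-1}(\varphi)\sinp(\varphi),
\]
whereas the $T=0$ value is $\pi_p/\alpha=\int_{-\pi_p/2}^{\pi_p/2}\alpha^{-1}\,d\varphi$. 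Let $t_0$ be the point with $\varphi(t_0)=0$ (equivalently $w(t_0)=0$). On $(-\pi_p/2,\pi_p/2)$ we have $\cosp(\varphi)>0$, so $g$ has the sign of $\sinp(\varphi)$: $g<0$ on $(a,t_0)$ and $g>0$ on $(t_0,b(a))$. Moreover $\alpha+g=\dot\varphi>0$ everywhere, and the inverse map satisfies $t(-\tilde\varphi)<t_0<t(\tilde\varphi)$ for every $\tilde\varphi\in(0,\pi_p/2)$.

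The heart of the argument is the strict inequality $\delta(a)>\pi_p/\alpha$, obtained by folding the $\varphi$-integral via $\varphi\mapsto-\varphi$. Fix $\tilde\varphi\in(0,\pi_p/2)$ and put $A=\tfrac{n-1}{p-1}\cosp^{p-1}(\tilde\varphi)\sinp(\tilde\varphi)>0$; then the contributions of $\pm\tilde\varphi$ to $\int\ton{\tfrac1{\dot\varphi}-\tfrac1\alpha}d\varphi$ involve $b_1=A/t(\tilde\varphi)=g(\tilde\varphi)$ and $b_2=A/t(-\tilde\varphi)=\abs{g(-\tilde\varphi)}$, and since $t(-\tilde\varphi)<t(\tilde\varphi)$ and $\dot\varphi>0$ we get $0<b_1<b_2<\alpha$. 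A one-line computation then gives
\[
\ton{\tfrac{1}{\alpha-b_2}-\tfrac1\alpha}+\ton{\tfrac{1}{\alpha+b_1}-\tfrac1\alpha}=\tfrac1\alpha\cdot\tfrac{\alpha(b_2-b_1)+2b_1b_2}{(\alpha-b_2)(\alpha+b_1)}>0,
\]
so the folded integrand of $\delta(a)-\pi_p/\alpha$ is pointwise positive, hence $\delta(a)>\pi_p/\alpha$ for every finite $a\ge0$ and every $n>1$. I expect this folding/comparison step to be the main obstacle: the point is exactly that the ``slow'' half $(a,t_0)$ of the oscillation sits at smaller $t$ (larger $\abs{T}$) than the ``fast'' half $(t_0,b(a))$, so the slowdown beats the speedup.

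The remaining claims follow from the two Prüfer ODEs almost mechanically. Continuity of $\delta$ on $(0,\infty)$ comes from continuous dependence of $w,\dot w$ on $a$ (Theorem \ref{teo_1dcont}) together with the transversality of the zero of $\cosp(\varphi)$ at $b(a)$ (since $\dot\varphi\ge\alpha/n>0$ there), which makes $b(a)$ continuous in $a$; at $a=0$ one uses the $C^0[0,\infty)$ regularity and local uniform convergence from Theorem \ref{teo_1dcont}, plus the same approximation argument used earlier in the section. From $\dot\varphi\ge\alpha/n$ one gets the a priori bound $\delta(a)\le n\pi_p/\alpha$. Evaluating $e$ at the endpoints gives $e(a)=\alpha$ and $e(b(a))=\alpha\,m(a)$, and since $\dot e/e=-\tfrac{n-1}{(p-1)t}\cosp^p(\varphi)\le0$ with strict inequality on the interior of $(a,b(a))$, $e$ is strictly decreasing; hence $m(a)=e(b(a))/\alpha<1$. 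Integrating the $e$-equation,
\[
m(a)=\exp\ton{-\int_a^{b(a)}\tfrac{n-1}{(p-1)t}\cosp^p(\varphi(t))\,dt},
\]
and the exponent has absolute value at most $\tfrac{n-1}{(p-1)a}\,\delta(a)\le\tfrac{n(n-1)\pi_p}{(p-1)\alpha a}\to0$, so $m(a)\to1$; likewise $\abs{g(\varphi,t)}\le \tfrac{C(n-1)}{(p-1)a}$ uniformly on $[a,b(a)]$ forces $\dot\varphi=\alpha+O(1/a)$ and therefore $\delta(a)\to\pi_p/\alpha$ as $a\to\infty$.
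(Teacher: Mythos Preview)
Your argument is correct. The core step --- the strict inequality $\delta(a)>\pi_p/\alpha$ --- is handled differently from the paper, and it is worth recording the contrast.

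The paper freezes the coefficient: it observes that
\[
\frac{n-1}{(p-1)t}\cosp^{p-1}(\varphi)\sinp(\varphi)\le \frac{n-1}{(p-1)t_0}\cosp^{p-1}(\varphi)\sinp(\varphi)
\]
on the whole interval (the sign of $\sinp(\varphi)$ matches the sign of $t-t_0$), compares $\varphi$ with the solution $\psi$ of the resulting \emph{autonomous} equation, separates variables to get $c(a)-a=\int_{-\pi_p/2}^{\pi_p/2}\frac{d\psi}{\alpha+\gamma\cosp^{p-1}(\psi)\sinp(\psi)}$, and then applies Jensen's inequality (strict, since $\gamma\neq 0$) to conclude $c(a)-a>\pi_p/\alpha$; finally $\delta(a)\ge c(a)-a$. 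Your route is more direct: you write $\delta(a)=\int_{-\pi_p/2}^{\pi_p/2}\dot\varphi^{-1}\,d\varphi$ and fold the integral under $\varphi\mapsto -\varphi$, using only that $t(-\tilde\varphi)<t(\tilde\varphi)$ and $\dot\varphi>0$ to get $0<b_1<b_2<\alpha$ and hence a pointwise positive folded integrand. Both approaches exploit exactly the same asymmetry (the damping is stronger on the half where it slows $\varphi$ down), but yours avoids the auxiliary ODE comparison and Jensen at the cost of a small algebraic identity; the paper's version has the advantage of producing an explicit autonomous comparison model.

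For the limits you also diverge slightly: the paper translates $t\mapsto t+a$ and invokes continuous dependence to pass to the $T=0$ model, whereas you use the a priori bound $\delta(a)\le n\pi_p/\alpha$ (from $\dot\varphi\ge\alpha/n$) together with $1/t\le 1/a$ to get quantitative estimates $\dot\varphi=\alpha+O(1/a)$ and $\log m(a)=O(1/a)$. Your argument is more self-contained and gives a rate; the paper's is shorter but leans on Theorem~\ref{teo_1dcont}. The statements about $m(a)<1$ and continuity are handled the same way in both.
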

\begin{proof}
Continuity follows directly from Theorem \ref{teo_1dcont}. To prove the estimate, we rephrase the question in the following way: consider the solution $\varphi$ of the initial value problem
\begin{gather*}
\begin{cases}
  \dot \varphi = \alpha + \frac{n-1}{(p-1)t}\cosp^{p-1} (\varphi)\sinp(\varphi)\\
\varphi(a)=-\frac{\pi_p}{2} 
\end{cases}
\end{gather*}
then $b(a)$ is the first value $b>a$ such that $\varphi(b)=\frac{\pi_p}2$. Denote $t_0\in (a,b)$ the only value where $\varphi(t_0)=0$, then it is easily seen that
\begin{gather*}
 \frac{n-1}{(p-1)t}\cosp^{p-1} (\varphi)\sinp(\varphi)\leq \frac{n-1}{(p-1)t_0}\cosp^{p-1} (\varphi)\sinp(\varphi) \ ,
\end{gather*}
so that $\varphi$ satisfies
\begin{gather*}
0<  \dot \varphi \leq \alpha + \gamma\cosp^{p-1} (\varphi)\sinp(\varphi) \ ,
\end{gather*}
where $\gamma=\frac{n-1}{(p-1)t_0}$. By a standard comparison theorem for ODE, $\varphi\leq \psi$ on $[a,b]$, where $\psi$ is the solution of the initial value problem
\begin{gather*}
\begin{cases}
  \dot \psi = \alpha + \gamma\cosp^{p-1} (\psi)\sinp(\psi)\\
\psi(a)=-\frac{\pi_p}{2} 
\end{cases}
\end{gather*}
We can solve explicitly this ODE via separation of variables. Letting $c(a)$ be the first value $c>a$ such that $\psi(c)=\pi_p/2$, we have
\begin{gather*}
 c(a)-a=\int_{-\frac {\pi_p} 2}^{\frac {\pi_p} 2} \frac{d\psi}{\alpha + \gamma\cosp^{p-1} (\psi)\sinp(\psi)} \ .
\end{gather*}
Applying Jansen's inequality, and noting that $\cosp^{p-1}(\psi)\sinp(\psi)$ is an odd function, we obtain the estimate
\begin{gather}\label{eq_jan}
\frac{c(a)-a}{\pi_p}= \frac 1 {\pi_p} \int_{-\pi_p/2}^{\pi_p/2} \frac{d\psi}{\alpha+\gamma \cosp^{p-1} (\psi)\sinp(\psi)}\geq\\
\geq \qua{\frac 1 {\pi_p}\int_{-\pi_p/2}^{\pi_p/2} \ton{\alpha+\gamma \cosp^{p-1} (\psi)\sinp(\psi)}d\psi}^{-1} = \frac{1}{\alpha} \ . 
\end{gather}
Note that the inequality is strict if $\gamma\neq0$, or equivalently if $T\neq 0$.

Since $\varphi\leq \psi$, it is easily seen that $b(a)\geq c(a)$, and we can immediately conclude that $\delta(a) \geq \pi_p/\alpha$ with equality only if $a=\infty$.

The behaviour of $\delta(a)$ as $a$ goes to infinity is easier to study if we perform a translation of the $t$ axis, and study the equation
\begin{gather*}
\begin{cases}
  \dot \varphi = \alpha + \frac{n-1}{(p-1)(t+a)}\cosp^{p-1} (\varphi)\sinp(\varphi)\\
\varphi(0)=-\frac{\pi_p}{2} 
\end{cases}
\end{gather*}
Continuous dependence on the parameters of the equation allows us to conclude that if $a$ goes to infinity, then $\varphi$ tends to the affine function $\varphi_0(t)=-\frac{\pi_p}{2} + \alpha {t}$ in the local $C^1$ topology. This proves the first claim. As for the statements concerning $m(a)$, note that the inequality $m(a)<1$ follows directly from the fact that $\frac{\dot e }{e}<0$ if $T\neq 0$. Moreover we can see that $m(a)=\tilde w (\delta(a))$, where $\tilde w$ is the solution of
\begin{gather}
\begin{cases}
 \Delta_p \tilde w =(p-1) \tilde {\dot w}^{p-2} \tilde {\ddot w}= -\frac{n-1} {x+a} \tilde {\dot w}^{(p-1)} - \lambda \tilde  w^{(p-1)}\\
\tilde w(0)=-1 \ \ \ \ \tilde {\dot w}(0)=0 \ .
\end{cases}
\end{gather}
The function $\tilde w$ converges locally uniformly to $\sinp(\alpha t -\frac{\pi_p}{2})$ as $a$ goes to infinity, and since $\delta(a)$ is bounded from above, it is straightforward to see that $\lim_{a\to \infty} m(a)=1$.
\end{proof}

As an immediate consequence of the above theorem, we have the following important
\begin{cor}\label{cor_delta}
 The function $\delta(a):[0,\infty]\to \R^+$ is continuous and
\begin{align}
 \delta(a)>\frac{\pi_p} \alpha \ \ \ \ \ &\text{ for  } a \in [0,\infty) \ ,\\
 \delta(a)=\frac{\pi_p} \alpha \ \ \ \ \ &\text{ for  } a =\infty \ . 
\end{align}
\end{cor}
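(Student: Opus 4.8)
The plan is to assemble the statements already proved in Theorem~\ref{teo_delta_comp}, since the corollary is purely a repackaging of those facts into the form that will be convenient for the proof of the main theorem. First I would note that $\delta(\infty)$ was \emph{defined} to be $\pi_p/\alpha$ (the gap of the translation-invariant model, the case $T=0$), so the second displayed equality holds by convention; Theorem~\ref{teo_delta_comp} confirms that this convention is the consistent one, because there $\lim_{a\to\infty}\delta(a)=\pi_p/\alpha$. Next, the strict inequality $\delta(a)>\pi_p/\alpha$ for every finite $a\in[0,\infty)$ is exactly the first assertion of Theorem~\ref{teo_delta_comp}, established there by passing to the Pr\"ufer variables and applying a convexity (Jensen) estimate to the comparison solution $\psi$, the inequality being strict precisely because $T\neq 0$ for finite $a$.

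It then remains only to address the claim that $\delta$ is continuous on the compactified half-line $[0,\infty]$ and takes values in $\R^+$. Finiteness and positivity of $\delta(a)$ for finite $a$ follow from Theorem~\ref{teo_1dcont}: a solution exists, is $C^0[0,\infty)$ when $a=0$, and is oscillatory, so the first zero $b(a)$ of $\dot w$ after $a$ is finite and $\delta(a)=b(a)-a>0$; combined with $\delta(\infty)=\pi_p/\alpha>0$ this gives the range statement. Continuity of $\delta$ on $[0,\infty)$ is the first assertion of Theorem~\ref{teo_delta_comp} (itself a consequence of the continuous dependence of solutions on the initial point, Theorem~\ref{teo_1dcont}), and continuity at the added point $a=\infty$ — with respect to the natural topology on $[0,\infty]$ — is by definition the statement $\delta(a)\to\delta(\infty)$ as $a\to\infty$, which is again part of Theorem~\ref{teo_delta_comp}.

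Accordingly I expect no genuine obstacle here: all of the analytic work — existence and continuous dependence, the Pr\"ufer reduction, the Jensen estimate, and the asymptotics $\delta(a)\to\pi_p/\alpha$ and $m(a)\to1$ — has already been carried out in the preceding theorems. The only mild point requiring care is bookkeeping: specifying the topology on $[0,\infty]$ so that ``continuity at $\infty$'' is meaningful, and checking that the convention $\delta(\infty)=\pi_p/\alpha$ agrees with the limit value, both of which are immediate.
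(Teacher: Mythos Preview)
Your proposal is correct and matches the paper's approach: the paper presents this corollary as ``an immediate consequence of the above theorem'' and gives no separate proof, and you have correctly identified that all the content (the strict inequality via the Jensen estimate in the Pr\"ufer variables, continuity on $[0,\infty)$ from continuous dependence, the limit $\delta(a)\to\pi_p/\alpha$, and the convention $\delta(\infty)=\pi_p/\alpha$) is already in Theorem~\ref{teo_delta_comp} and the remark preceding it.
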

Recall that $a=\infty$ if and only if $m(a)=1$, and also $\delta(a)=\frac{\pi_p} \alpha$ if and only if $m(a)=1$.

\section{Volume estimates}\label{sec_max}
The next comparison theorem will allow us to compare the maxima of eigenfunctions with the maxima of the model functions, so it is essential for the proof of the main theorem. We begin with some definitions.
\begin{deph}
 Given $u$ eigenfunction and $w$ as before, let $t_0\in (a,b)$ be the unique zero of $w$ and let $g\equiv w^{-1}\circ u$. We define the measure $m$ on $[a,b]$ by
\begin{gather*}
 m(A)\equiv \V(g^{-1}(A)) \ ,
\end{gather*}
where $\V$ is the Riemannian measure on $M$. Equivalently, for any bounded measurable $f:[a,b]\to \R$, we have
\begin{gather*}
 \int_a^b f(s) dm(s) = \int_M f(g(x))d\V(x) \ . 
\end{gather*}

\end{deph}
\begin{teo}\label{teo_comp}
 Let $u$ and $w$ be as above, and let
\begin{gather*}
 E(s)\equiv -\operatorname{exp}\ton{\lambda\int_{t_0}^s \frac{w^{(p-1)}}{\dot w^{(p-1)}} dt}\int_a ^s{w(r)}^{(p-1)} dm(r)
\end{gather*}
Then $E(s)$ is increasing on $(a,t_0]$ and decreasing on $[t_0,b)$.
\end{teo}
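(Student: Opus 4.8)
The plan is to differentiate $E(s)$ and show that the sign of $E'(s)$ is governed by the sign of $(t_0 - s)$, or more precisely, that $E'(s) \geq 0$ on $(a, t_0]$ and $E'(s) \leq 0$ on $[t_0, b)$. Write $E(s) = -e^{\Lambda(s)} I(s)$ where $\Lambda(s) = \lambda \int_{t_0}^s \frac{w^{(p-1)}}{\dot w^{(p-1)}}\,dt$ and $I(s) = \int_a^s w(r)^{(p-1)}\,dm(r)$. Then
\begin{gather*}
 E'(s) = -e^{\Lambda(s)}\qua{\Lambda'(s) I(s) + I'(s)} = -e^{\Lambda(s)}\qua{\lambda \frac{w(s)^{(p-1)}}{\dot w(s)^{(p-1)}} I(s) + w(s)^{(p-1)} m'(s)}\, ,
\end{gather*}
interpreting $I'(s)$ via the measure $m$. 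Since $e^{\Lambda(s)} > 0$ and $\dot w > 0$ on $(a,b)$, the sign of $E'(s)$ is opposite to that of $w(s)^{(p-1)}\qua{\lambda I(s) + \dot w(s)^{(p-1)} \tfrac{dm}{ds}}$ (after multiplying through by $\dot w^{(p-1)} > 0$). Because $w(s)^{(p-1)}$ has the sign of $w(s)$, which is negative on $(a,t_0)$ and positive on $(t_0,b)$, the claim reduces to showing that the bracketed quantity
\[
 \Phi(s) \equiv \lambda \int_a^s w(r)^{(p-1)}\,dm(r) + \dot w(s)^{(p-1)}\,m'(s)
\]
is $\leq 0$ throughout $(a,b)$ — this would give $E' \geq 0$ on $(a,t_0)$ and $E' \leq 0$ on $(t_0,b)$ as desired.

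The key is therefore to produce a differential inequality for $\int_a^s w^{(p-1)}\,dm$. This is exactly where the gradient comparison theorem (Theorem \ref{grad_est}) and the coarea formula enter. I expect the measure $m$ to satisfy a comparison of the form $dm \leq (\text{something involving } \dot w)\,ds$ coming from $\abs{\nabla u} \leq \dot w|_{w^{-1}(u)}$, so that one can control $m'(s)$. More importantly, integrating the eigenvalue equation $\Delta_p u = -\lambda u^{(p-1)}$ over the superlevel (or sublevel) set $\{g \leq s\} = \{u \leq w(s)\}$ and applying the divergence theorem should yield
\[
 \lambda \int_{\{u \le w(s)\}} u^{(p-1)}\,d\V = -\int_{\{u = w(s)\}} \abs{\nabla u}^{p-2}\ps{\nabla u}{\hat n}\,d\sigma = -\int_{\{u=w(s)\}} \abs{\nabla u}^{p-1}\,d\sigma \, ,
\]
the left side being $\lambda \int_a^s w(r)^{(p-1)}\,dm(r)$ by definition of $m$, and the boundary integral being controlled via the coarea formula and the gradient bound by $\dot w(s)^{p-1}\, m'(s)$ (with the one-dimensional model providing the matching identity that makes the comparison tight). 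This is the analogue, for the $p$-Laplacian, of the standard argument comparing level-set volumes of an eigenfunction with those of a one-dimensional model.

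The main obstacle I anticipate is making the coarea/divergence-theorem step rigorous given the limited regularity: $u$ is only $C^{1,\alpha}$, and is smooth only where $\nabla u \neq 0$, so the level sets $\{u = w(s)\}$ need not be smooth hypersurfaces for every $s$, and $m$ could fail to be absolutely continuous a priori. One would handle this by Sard-type arguments (almost every level is regular), by approximating $u$ with the rescaled eigenfunctions $\xi u$ as in the proof of Theorem \ref{grad_est}, and by working with the weak formulation of $\Delta_p u = -\lambda u^{(p-1)}$ tested against suitable cutoffs of $(u - w(s))_\pm$ rather than literal surface integrals. A secondary technical point is checking the behavior at the endpoints $a$ and $b$ (where $\dot w \to 0$) and at $t_0$ (where the exponential weight is normalized to $1$), but since we only claim monotonicity on the half-open intervals $(a,t_0]$ and $[t_0,b)$ these endpoint degeneracies should not cause trouble once the interior differential inequality $\Phi(s) \le 0$ is established.
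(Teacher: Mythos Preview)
Your overall strategy is sound, but there is a sign error in the reduction step. From $E'(s) = -e^{\Lambda(s)}\,\dfrac{w(s)^{(p-1)}}{\dot w(s)^{(p-1)}}\,\Phi(s)$ you correctly observe that $\operatorname{sign}(E')=-\operatorname{sign}(w)\cdot\operatorname{sign}(\Phi)$; but then the conclusion you want ($E'\ge 0$ on $(a,t_0)$ where $w<0$, and $E'\le 0$ on $(t_0,b)$ where $w>0$) requires $\Phi(s)\ge 0$, not $\Phi(s)\le 0$. In fact your own divergence--coarea computation delivers exactly that: from $\lambda I(s)=-\int_{\{u=w(s)\}}\abs{\nabla u}^{p-1}d\sigma$ and $m'(s)=\dot w(s)\int_{\{u=w(s)\}}\abs{\nabla u}^{-1}d\sigma$ one gets
\[
\Phi(s)=\int_{\{u=w(s)\}}\Bigl(\frac{\dot w(s)^{p}}{\abs{\nabla u}}-\abs{\nabla u}^{p-1}\Bigr)\,d\sigma\ \ge\ 0
\]
pointwise by the gradient comparison $\abs{\nabla u}\le \dot w(s)$. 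So once the sign is corrected the argument closes.

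The paper, however, takes a genuinely different route to the same inequality $\dot w^{(p-1)}\,dm+\lambda\bigl(\int_a^s w^{(p-1)}dm\bigr)ds\ge 0$. Rather than integrating $\Delta_p u$ over sublevel sets and invoking the coarea formula (which, as you note, needs Sard-type arguments to handle the $C^{1,\alpha}$ regularity of $u$), the paper tests the equation against functions of $u$: for an arbitrary smooth nonnegative $H$ compactly supported in $(a,b)$ it builds $G$ with $\frac{d}{dt}G(w(t))^{(p-1)}=H(t)$ and $K$ with $(tK(t))'=G(t)$, computes $\Delta_p(uK(u))=G^{(p-1)}(u)\Delta_p u+(p-1)\abs{G(u)}^{p-2}\dot G(u)\abs{\nabla u}^{p}$, integrates over $M$ (so the left side vanishes), and applies the gradient bound to the $\abs{\nabla u}^p$ term. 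After pushing forward to the measure $m$ and an integration by parts, this yields the inequality in distributional form for every test $H\ge 0$. The advantage of the paper's method is that it never touches individual level sets and so avoids all the regularity issues you flagged; your level-set approach is more geometric and perhaps more transparent, but requires the extra care you anticipated.
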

Before the proof, we note that this theorem can be rewritten in a more convenient way. Consider in fact that by definition
\begin{gather*}
 \int_{a}^s \pw(r)\ dm(r)= \int_{\{u\leq w(s) \}}u(x)^{(p-1)}\ d\V(x) \ .
\end{gather*}
Moreover, note that the function $w$ satisfies
\begin{gather*}
 \frac d {dt}(t^{n-1}\pdw)=-\lambda t^{n-1}\pw \ , \\
 -\lambda\frac{\pw}{\pdw}=
\frac d {dt} \log(t^{n-1}\pdw) \ ,
\end{gather*}
and therefore
\begin{gather*}
 -\lambda\int_a ^s \pw(t)t^{n-1}dt = s^{n-1}\pdw (s) \ , \\
\operatorname{exp}\ton{\lambda \int_{t_0}^s \frac{\pw}{\pdw} dt }=\frac{t_0^{n-1}\pdw(t_0)}{s^{n-1}\pdw(s)} \ .
\end{gather*}
Thus, the function $E(s)$ can be rewritten as
\begin{gather*}
 E(s)=C\frac{\int_a ^s\pw(t) \ dm(r)}{\int_a ^s \pw(t)\ t^{n-1}dt}=C\frac{\int_{\{u\leq w(s)\}}{u(x)}^{(p-1)}\ d\V(x)}{\int_a ^s {w(t)}^{(p-1)}\ t^{n-1}dt} \ ,
\end{gather*}
where $\lambda C^{-1}=t_0^{n-1}\pdw(t_0)$, and the previous theorem can be restated as follows.
\begin{teo}\label{int_comp}
 Under the hypothesis of the previous theorem, the ratio
\begin{gather*}
 E(s)=\frac{\int_a ^s\pw(r) \ dm(r)}{\int_a ^s \pw(t)\ t^{n-1}dt}=\frac{\int_{\{u\leq w(s)\}}{u(x)}^{(p-1)}\ d\V(x)}{\int_a ^s {w(t)}^{(p-1)}t^{n-1} dt}
\end{gather*}
is increasing on $[a,t_0]$ and decreasing on $[t_0,b]$.
\end{teo}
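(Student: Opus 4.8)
The plan is to work with the coarea-type measure $m$ and the layer-cake quantity $Q(s)\equiv\int_{\{u\le w(s)\}}u^{(p-1)}\,d\V = \int_a^s \pw(r)\,dm(r)$, and to compare it against the model quantity $N(s)\equiv\int_a^s \pw(t)\,t^{n-1}\,dt$. The key analytic input is the gradient comparison Theorem \ref{grad_est}, which gives $\abs{\nabla u}\le\dot w|_{w^{-1}(u)}$ pointwise on $M$, equivalently $\abs{\nabla g}\le 1$ where $g=w^{-1}\circ u$. First I would differentiate $Q$. By the coarea formula, for a.e. $s$,
\begin{gather*}
 Q'(s) = \pw(w(s))\,\frac{d}{ds}\V(\{g\le s\}) = \pw(w(s))\,m'(s) \ ,
\end{gather*}
and I want to estimate $m'(s)$. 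This is where the eigenfunction equation enters: integrating $\Delta_p u = -\lambda\abs u^{p-2}u$ over the sublevel set $\{u\le w(s)\} = \{g\le s\}$ and applying the divergence theorem (using the Neumann condition on $\partial M$ so the boundary term on $\partial M$ drops), one gets
\begin{gather*}
 -\lambda\int_{\{g\le s\}} u^{(p-1)}\,d\V = \int_{\{g=s\}}\abs{\nabla u}^{p-2}\ps{\nabla u}{\hat\nu}\,d\sigma = \int_{\{g=s\}}\abs{\nabla u}^{p-1}\,d\sigma \ ,
\end{gather*}
since $\nabla u$ points in the direction of increasing $u=$ increasing $g$ on the level set. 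Thus $-\lambda Q(s) = \int_{\{g=s\}}\abs{\nabla u}^{p-1}\,d\sigma$. On the other hand, by the coarea formula $m'(s) = \int_{\{g=s\}}\abs{\nabla g}^{-1}\,d\sigma$ and $\abs{\nabla u} = \abs{\nabla g}\cdot\abs{\dot w(s)}$ on that set, so $\int_{\{g=s\}}\abs{\nabla u}^{p-1}\,d\sigma = \dot w(s)^{p-1}\int_{\{g=s\}}\abs{\nabla g}^{p-1}\,d\sigma$.

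Now I would combine these. Using $\abs{\nabla g}\le 1$ together with Hölder (or just the elementary inequality $\abs{\nabla g}^{p-1}\le \abs{\nabla g}^{-1}$ when $p\le 2$, and a Hölder argument with the surface measure when $p\ge 2$, bounding $\int\abs{\nabla g}^{p-1}$ by $(\int\abs{\nabla g}^{-1})^{\cdot}$ times a power of $\sigma(\{g=s\})$ — one has to be a little careful here) one relates $\int_{\{g=s\}}\abs{\nabla g}^{p-1}\,d\sigma$ to $m'(s)$. Actually the cleanest route, and the one I expect the paper to take, is to write the logarithmic derivative of $E$. Set $E(s) = C\,Q(s)/N(s)$. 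Then
\begin{gather*}
 \frac{E'(s)}{E(s)} = \frac{Q'(s)}{Q(s)} - \frac{N'(s)}{N(s)} \ .
\end{gather*}
From $-\lambda Q(s) = \dot w(s)^{p-1}\int_{\{g=s\}}\abs{\nabla g}^{p-1}d\sigma$ and $-\lambda N(s) = s^{n-1}\dot w(s)^{(p-1)}$ (the model identity already recorded in the excerpt), I get
\begin{gather*}
 \frac{Q(s)}{N(s)} = \frac{\int_{\{g=s\}}\abs{\nabla g}^{p-1}\,d\sigma}{s^{n-1}} \ .
\end{gather*}
So it suffices to show the right-hand side is increasing on $(a,t_0)$ and decreasing on $(t_0,b)$ — equivalently, bounding its derivative in terms of $m'(s)$ and the second-variation/mean-curvature contributions to $\frac{d}{ds}\sigma(\{g=s\})$. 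Here one differentiates the surface integral, picks up a term from the mean curvature of the level sets and a term from $\frac{d}{ds}\abs{\nabla g}^{p-1}$; the Bochner/Laplacian comparison machinery (Corollary \ref{cor_est_pu}, Lemma \ref{lemma_n}, and $\Ric\ge0$) is exactly what controls these terms, producing a differential inequality of the form $\big(\log E\big)'$ has the sign of $(t_0 - s)$ after using the ODE \eqref{eq_1dm} satisfied by $w$ to cancel the model terms. The sign change at $t_0$ comes from the factor $\pw(w(s))$, which vanishes and changes sign at $s=t_0$ (where $w(t_0)=0$), so it is natural that $t_0$ is the turning point.

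The main obstacle, I expect, is making the differentiation of the level-set surface integral $\sigma(\{g=s\})$ rigorous: $u$ is only $C^{1,\alpha}$ globally and smooth merely where $\nabla u\ne0$, and the level sets $\{g=s\}$ need not be smooth for all $s$ (Sard-type issues, the critical set $\{\nabla u = 0\}$). The clean way around this is to avoid pointwise differentiation of surface measures altogether and instead prove the monotonicity of $E$ in integrated form: show directly that for $a<s_1<s_2\le t_0$ one has $E(s_1)\le E(s_2)$ by an integration-by-parts argument on $[s_1,s_2]$ using only the weak formulation of $\Delta_p u = -\lambda u^{(p-1)}$ tested against suitable functions of $g$, together with $\abs{\nabla g}\le 1$ and the ODE \eqref{eq_1dm}. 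Concretely, one tests the eigenvalue equation against $\chi(g(x))$ for cutoff-type $\chi$, uses $\abs{\nabla u}^{p-2}\ps{\nabla u}{\nabla g} = \abs{\nabla u}^{p-1}\abs{\nabla g}\le \abs{\nabla u}^{p-1} \le \dot w^{(p-1)}|_{w^{-1}(u)}\abs{\nabla g}^{?}$ — and the pointwise gradient bound — to turn the manifold integral into a one-dimensional integral against $dm$, then compares with the model ODE identity. The measure-theoretic regularity of $m$ (it has no atoms on $(a,b)$ because $\V(\{\nabla u=0, a<g<b\})$ contributes measure zero to the relevant sets, or is handled by the $C^{1,\alpha}$ theory and the fact that $\abs{\nabla g}\le1$) is the remaining technical point, and I would dispatch it with the regularity results cited in \cite{reg} at the start of the paper.
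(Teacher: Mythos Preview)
Your core inequality is correct and is exactly what the paper establishes. Integrating $\Delta_p u=-\lambda u^{(p-1)}$ over $\{g\le s\}$ and combining with the pointwise bound $\abs{\nabla g}^{p-1}\le\abs{\nabla g}^{-1}$ (valid since $\abs{\nabla g}\le 1$ by Theorem~\ref{grad_est}) gives, at least formally, $-\lambda Q(s)\le \dot w(s)^{p-1}m'(s)$, i.e.\ the distributional inequality $\dot w^{(p-1)}\,dm-\lambda A\,ds\ge 0$ with $A=-Q$. From here the conclusion is \emph{one elementary step}, which you miss: multiply by $w^{(p-1)}/\dot w^{(p-1)}$ and recognise that $dA+\lambda\tfrac{w^{(p-1)}}{\dot w^{(p-1)}}A\,ds$ is exactly $\exp\!\big(-\lambda\int_{t_0}^s w^{(p-1)}/\dot w^{(p-1)}\big)\,dE$; the sign of the multiplier then gives $E$ increasing on $(a,t_0]$ and decreasing on $[t_0,b)$. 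Your paragraph about differentiating $\sigma(\{g=s\})$, mean curvature of level sets, and re-invoking the Bochner machinery of Corollary~\ref{cor_est_pu} and Lemma~\ref{lemma_n} is a wrong turn: those tools were already consumed in proving the gradient comparison and play no further role here. Drop that entirely.

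On rigour, your final paragraph is the right instinct and is essentially what the paper does. Rather than worry about level-set smoothness and coarea, the paper tests the weak equation against functions of $u$: for arbitrary $0\le H\in C_c^\infty(a,b)$ set $G^{(p-1)}(w(t))=\int_a^t H$ and $K$ with $(tK(t))'=G(t)$; then $\int_M\Delta_p(uK(u))\,d\V=0$ together with the chain-rule identity $\Delta_p(uK(u))=G^{(p-1)}(u)\Delta_p u+(p-1)\abs{G(u)}^{p-2}\dot G(u)\abs{\nabla u}^p$ and the bound $\abs{\nabla u}^p\le\dot w^p|_{w^{-1}(u)}$ (applicable because $\dot G\ge 0$) yields, after pushing to $m$, one Fubini, and using $\int_a^b w^{(p-1)}\,dm=\int_M u^{(p-1)}\,d\V=0$, the inequality $\dot w^{(p-1)}\,dm\ge\lambda A\,ds$ against every such $H$, hence as measures. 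No coarea, no Sard, no smoothness of level sets is needed.
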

\begin{proof}[\textsc{Proof of Theorem \ref{teo_comp}}]
 Chose any smooth nonnegative function $H(s)$ with compact support in $(a,b)$, and define $G:[-1,w(b)]\to \R$ in such a way that
\begin{gather*}
 \frac d {dt} \qua{{G(w(t))}^{(p-1)}} =H(t) \quad \ \ G(-1)=0 \ .
\end{gather*}
It follows that
\begin{gather*}
 \pG(w(t))= \int_a ^t H(s) ds \ \ \ \ (p-1) \abs{G(w(t))}^{p-2} \dot G(w(t)) \dot w (t) = H(t) \ .
\end{gather*}
Then choose a function $K$ such that $(t K(t))'=K(t)+t \dot K(t)=G(t)$. By the chain rule we obtain
\begin{gather*}
 \Delta_p (uK(u))=G^{(p-1)} (u) \Delta_p(u) + (p-1)\abs{G(u)}^{p-2} \dot G(u) \abs{\nabla u}^p \ .
\end{gather*}
Using the weak formulation of the divergence theorem, it is straightforward to verify that \[\int_M \Delta_p(uK(u))d \V =0\], and so we get
\begin{gather*}
\frac{\lambda  }{p-1} \int_M \pu \pG(u) d\V(x)= \int_M \abs{G(u)}^{p-2} \dot G (u) \abs{\nabla u}^{p} d\V \ .
\end{gather*}
Applying the gradient comparison theorem (Theorem \ref{grad_est}), noting that we consider only $\lambda>0$, we have
\begin{gather*}
\frac{\lambda  }{p-1} \int_M \pu \pG (u)d\V(x)\leq \int_M \abs{G(u)}^{p-2} \dot G (u) (\dot w \circ w^{-1}(u)) ^p d\V \ .
\end{gather*}
By definition of $d m $, the last inequality can be written as
\begin{gather*}
\frac{\lambda  }{p-1} \int_a^b \pw(s) \pG(w(s)) dm(s)\leq \\
\leq\int_a^b \abs{G(w(s))}^{p-2} \dot G (w(s)) (\dot w(s)) ^p dm(s) \ ,
\end{gather*}
and recalling the definition of $G$ we deduce that
\begin{gather*}
 \lambda \int_a^b \pw(s) \ton{\int_a^s H(t) dt}dm(s)= \lambda \int_a^b  \ton{\int_s^b \pw(t) dm(t)} H(s)ds \leq \\
\leq \int_a^b H(s) \pdw(s)\ dm(s) \ .
\end{gather*}
Since $\int_a^b \pw(t) dm(t)=0$, we can rewrite the last inequality as
\begin{gather*}
 \int_a^b  H(s) \qua{-\lambda\int_a^s \pw(t) dm(t)} ds \leq \int_a^b H(s) \pdw(s)\ dm(s) \ .
\end{gather*}
Define the function $A(s)\equiv -\int_a^s \pw(r) dm(r)$. Since the last inequality is valid for all smooth nonnegative function $H$ with compact support, then
\begin{gather*}
\pdw(s) dm(s)-\lambda A(s)ds \geq 0 
\end{gather*}
in the sense of distributions, and therefore the left hand side is a positive measure. In other words, the measure $\lambda Ads+ \frac{\pdw}{\pw} dA$ is nonpositive. Of if we multiply the last inequality by $\frac{\pw}{\pdw}$, and recall that $w\geq 0$ on $[t_0,b)$ and $w\leq 0$ on $(a,t_0]$, we conclude that the measure 
\begin{gather*}
 \lambda  \frac{\pw}{\pdw}Ads+  dA
\end{gather*}
is nonnegative on $(a,t_0]$ and nonpositive on $[t_0,b)$, or equivalently the function
\begin{gather*}
 E(s)= A(s) \operatorname{exp}\ton{\lambda\int_{t_0}^s \frac{\pw}{\pdw}(r) dr}
\end{gather*}
is increasing on $(a,t_0]$ and decreasing on $[t_0,a)$.
\end{proof}

Before we state the comparison principle for maxima of eigenfunctions, we need the following lemma. The definitions are consistent with the ones in Theorem \ref{grad_est}.
\begin{lemma}\label{lemma_radepsilon}
 For $\epsilon$ sufficiently small, the set $u^{-1}[-1,-1+\epsilon)$ contains a ball of radius $r=r_\epsilon$, which is determined by
\begin{gather*}
r_\epsilon=w ^{-1}(-1+\epsilon)-a \ .
\end{gather*}
\begin{proof}
 This is a simple application of the gradient comparison principle (Theorem \ref{grad_est}). Let $x_0$ be a minimum point of $u$, i.e. $u(x_0)=-1$, and let $\bar x$ be another point in the manifold. Let $\gamma:[0,l]\to M$ be a unit speed minimizing geodesic from $x_0$ to $\bar x$, and define $f(t)\equiv u(\gamma(t))$. It is easy to see that
\begin{gather}\label{ref_h}
\abs{\dot f (t)}=\abs{\ps{\nabla u |_{\gamma(t)}}{\dot \gamma(t)}}\leq \abs{\nabla u |_{\gamma(t)}}\leq \dot w |_{w^{-1}(f(t))}  \ .
\end{gather}
Since
\begin{gather*}
\frac d {dt} w^{-1}(f(t))\leq 1 \ ,
\end{gather*}
we have that $a\leq w^{-1}(f(t))\leq a+t$, and since $\dot w$ is increasing in a neighborhood of $a$, we can deduce that
\[\dot w |_{w^{-1}f(t)}  \leq \dot w |_{a+t} \ .\]
By the absolute continuity of $u$ and $\gamma$, we can conclude that
\begin{gather*}
 \abs{f(t)+1}\leq\int_0^t  \dot w |_{a+s} ds = (w(a+ t)+1) \ .
\end{gather*}
This means that if $l=d(x_0,\bar x)< w ^{-1}(-1+\epsilon)-a$, then $u(\bar x)< -1+\epsilon$.
\end{proof}

\end{lemma}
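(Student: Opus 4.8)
The plan is to exploit the gradient comparison theorem (Theorem \ref{grad_est}) to control how fast $u$ can rise away from a point where it attains its minimum, comparing the growth of $u$ along minimizing geodesics with the growth of the model function $w$ near its left endpoint $t=a$.

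First I would fix a point $x_0$ with $u(x_0)=\min u=-1$, and claim that the open ball $B(x_0,r_\epsilon)$ is contained in $u^{-1}[-1,-1+\epsilon)$. Let $\bar x$ be an arbitrary point with $l:=d(x_0,\bar x)<r_\epsilon$, let $\gamma:[0,l]\to M$ be a unit-speed minimizing geodesic from $x_0$ to $\bar x$, and set $f(t):=u(\gamma(t))$; since $u\in C^{1,\alpha}(M)$ the function $f$ is $C^1$, and $\nabla u|_{x_0}=0$ gives $\dot f(0)=0$. Along $\gamma$ one has the pointwise bound
\begin{gather*}
 \abs{\dot f(t)}=\abs{\ps{\nabla u|_{\gamma(t)}}{\dot\gamma(t)}}\le\abs{\nabla u|_{\gamma(t)}}\le \dot w|_{w^{-1}(f(t))}\ ,
\end{gather*}
the last inequality being exactly the gradient comparison theorem; the hypothesis $[\min u,\max u]\subset[-1,w(b)]$ and the strict monotonicity of $w$ on $[a,b]$ ensure that $w^{-1}(f(t))$ is a well-defined point of $[a,b]$.

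Next I would convert this into a bound on $f$ itself. From the estimate above, $\frac{d}{dt}w^{-1}(f(t))=\dot f(t)/\dot w\bigl(w^{-1}(f(t))\bigr)\le 1$ wherever it is defined, and since $w^{-1}(f(0))=a$ this integrates to $w^{-1}(f(t))\le a+t$; moreover $f(t)\ge-1=w(a)$ forces $w^{-1}(f(t))\ge a$. The one genuinely delicate ingredient --- and the reason the statement is only claimed for small $\epsilon$ --- is that I need $\dot w$ to be monotone increasing on the whole interval $[a,w^{-1}(-1+\epsilon)]$, so that $\dot w|_{w^{-1}(f(t))}\le\dot w|_{a+t}$ for $0\le t\le l$. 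This holds on a right-neighbourhood of $a$ by a short look at the ODE \eqref{eq_1dm}: for instance when $a>0$ we have $\dot w(a)=0$ while the derivative of $\dot w^{(p-1)}$ at $t=a$ equals $-\lambda\, w(a)^{(p-1)}=\lambda>0$, so $\dot w^{(p-1)}$, hence $\dot w$, is strictly increasing just after $a$; taking $\epsilon$ small keeps $w^{-1}(-1+\epsilon)$ inside this neighbourhood. Granting this, integrating $\abs{\dot f}$ yields
\begin{gather*}
 f(t)+1\le\int_0^t\abs{\dot f(s)}\,ds\le\int_0^t\dot w|_{a+s}\,ds=w(a+t)+1\ ,
\end{gather*}
so $f(t)\le w(a+t)$ on $[0,l]$.

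Finally, evaluating at $t=l$ and using $a+l<a+r_\epsilon=w^{-1}(-1+\epsilon)$ together with the monotonicity of $w$ gives $u(\bar x)=f(l)\le w(a+l)<-1+\epsilon$; since also $u\ge-1$, this shows $B(x_0,r_\epsilon)\subset u^{-1}[-1,-1+\epsilon)$, which is the assertion. I expect the main obstacle to be the regularity bookkeeping near $t=0$ --- and, more generally, near any $t$ with $f(t)=-1$ --- where $w^{-1}$ has infinite derivative at the value $-1$: this is ultimately harmless, since there $\dot f$ is small and all the functions in play are absolutely continuous, so one integrates away from such points and passes to the limit, but it deserves a line of care; verifying that $\dot w$ is increasing on $[a,w^{-1}(-1+\epsilon)]$ is the only other step requiring honest (if brief) analysis of the model ODE.
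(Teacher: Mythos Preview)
Your proof is correct and follows essentially the same route as the paper's: pick a minimum point, run a unit-speed geodesic, bound $|\dot f|$ by the gradient comparison, use $\frac{d}{dt}w^{-1}(f(t))\le 1$ together with monotonicity of $\dot w$ near $a$ to get $|\dot f(t)|\le \dot w(a+t)$, and integrate. Your additional remarks on why $\dot w$ is increasing near $a$ and on the regularity issue at points where $f=-1$ are welcome clarifications but do not change the argument.
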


And now we are ready to prove the comparison theorem. 
\begin{teo}
If $u$ is an eigenfunction on $M$ such that $\min\{u\}=-1=u(x_0)$ and $\max\{u\}\leq m(0)=w(b(0))$, then for every $r>0$ sufficiently small, the volume of the ball centered at $x_0$ and of radius $r$ is controlled by
\begin{gather*}
 \V(B(x_0,r))\leq c r^n \ .
\end{gather*}

\end{teo}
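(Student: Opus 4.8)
The goal is a volume bound $\V(B(x_0,r)) \le c r^n$ near a minimum point $x_0$ of the eigenfunction $u$. The natural strategy is to compare the volume of geodesic balls in $M$ with the one-dimensional model density $t^{n-1}$, using the integral comparison Theorem~\ref{int_comp} applied to $E(s)$ evaluated at small $s$ close to $a$. The idea is that Lemma~\ref{lemma_radepsilon} already tells us that $B(x_0, r_\epsilon) \subset u^{-1}[-1, -1+\epsilon)$ with $r_\epsilon = w^{-1}(-1+\epsilon) - a$, so $\V(B(x_0,r_\epsilon)) \le m([a, w^{-1}(-1+\epsilon)]) = \V(g^{-1}[a, s_\epsilon])$ where $s_\epsilon = w^{-1}(-1+\epsilon)$; hence it suffices to bound this latter ``model slab mass'' by $c\, r_\epsilon^n$, and then use that $r$ ranges over $(0, r_\epsilon)$ as $\epsilon$ shrinks.

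\textbf{Key steps.} First I would translate the desired bound into a statement about $m([a,s])$ for $s$ near $a$: by Lemma~\ref{lemma_radepsilon}, $\V(B(x_0,r)) \le m([a, s])$ whenever $r \le w^{-1}(-1 + \text{(value making } s)) - a$, i.e. whenever $w(a+r) \ge w(s)$; taking $s = a+r$ (using $\tfrac{d}{dt}w^{-1}(f(t)) \le 1$ style monotonicity), it is enough to show $m([a, a+r]) \le c r^n$ for small $r$. Second, I would invoke Theorem~\ref{int_comp}: the ratio
\begin{gather*}
 E(s) = \frac{\int_a^s \pw(r)\, dm(r)}{\int_a^s \pw(t)\, t^{n-1}\, dt}
\end{gather*}
is \emph{increasing} on $[a, t_0]$, so for $a < s \le t_0$ we have $E(s) \le E(t_0^-)$, and more usefully $E(s)$ is bounded by its value at any fixed interior point, giving a constant $C_0$ with $\int_a^s \pw(r)\, dm(r) \le C_0 \int_a^s \pw(t)\, t^{n-1}\, dt$ for all $s$ in a fixed subinterval $[a, s_1]$ with $s_1 < t_0$. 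Third, since $w < 0$ on $(a, t_0)$ and $\pw = \abs{w}^{p-1}\operatorname{sign}(w)$, both integrands are negative there; dividing out and using that $\abs{w(r)}^{p-1}$ is comparable to $\abs{w(s)}^{p-1}$ on $[a,s]$ — more precisely using the behavior of $w$ near $a$ where $w(a) = -1$, so $\abs{w}$ stays bounded away from $0$ near $a$ — I would extract $m([a,s]) \le C_1 \int_a^s t^{n-1}\, dt \le C_1\, s^n / n$. Finally, since near $a$ the relation $r \le w(s)+1$ combined with $w$ being roughly linear-growth-controlled from $a$ (recall $\dot w(a) = 0$ but $\dot w$ increases, and in the model $w(a+t) + 1 \sim c t^p$ from $\Delta_p w = T\pdw - \lambda \pw$ with $\pw(a) = -1$), one gets $s - a$ comparable to a power of $r$, and a short bookkeeping turns $m([a,s]) \le C_1 s^n/n$ into $\V(B(x_0,r)) \le c r^n$.

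\textbf{Main obstacle.} The delicate point is the precise relationship near the degenerate endpoint $a$, where $\dot w(a) = 0$: one must be careful that $w^{-1}$ is not Lipschitz there (it behaves like $(w+1)^{1/p}$ for the model, since $\ddot w \sim \text{const}$ makes $\dot w \sim \text{const}\cdot(t-a)$ and $w+1 \sim \text{const}\cdot(t-a)^2$ only when $p=2$; for general $p$ the ODE $\Delta_p w = (p-1)\dot w^{p-2}\ddot w \approx \lambda$ near $a$ gives $\dot w \sim c(t-a)^{1/(p-1)}$ and $w + 1 \sim c(t-a)^{p/(p-1)}$). Tracking the correct exponent so that the slab $[a, a+r]$ in the geodesic parameter $t$ matches a slab of comparable width in the model variable $s$, and confirming that this produces exactly the Euclidean power $r^n$ rather than some other power, is where the real care is needed; everything else reduces to the monotonicity of $E$ from Theorem~\ref{int_comp} and elementary estimates on $\int_a^s t^{n-1}\,dt$.
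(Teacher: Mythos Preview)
Your overall strategy---combine Lemma~\ref{lemma_radepsilon} with the monotonicity of $E$ from Theorem~\ref{int_comp}, then strip off the weight $|w|^{p-1}$ using that $|w|$ is bounded away from $0$ near $a$---is exactly the paper's approach. But you have missed the one point that makes the computation trivial and have instead flagged a non-issue as the ``main obstacle''.

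The hypothesis of the theorem is $\max\{u\}\le m(0)=w(b(0))$: this is the model with initial point $a=0$. With $a=0$ one has $r_\epsilon=w^{-1}(-1+\epsilon)-a=w^{-1}(-1+\epsilon)$, and the model slab is $\{w\le -1+\epsilon\}=[0,r_\epsilon]$, so
\[
\int_a^{a+r_\epsilon} t^{n-1}\,dt=\int_0^{r_\epsilon} t^{n-1}\,dt=\frac{r_\epsilon^{\,n}}{n}
\]
directly, with no asymptotic analysis of $w$ needed. Your entire final paragraph (the behaviour of $\dot w$ near $a$, the exponent $p/(p-1)$, the ``bookkeeping'' to recover $r^n$) is therefore unnecessary. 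Worse, for a general $a>0$ the argument you sketch would \emph{not} produce the exponent $n$: one would get $\int_a^{a+r} t^{n-1}\,dt\sim a^{n-1}r$ for small $r$, which is the wrong power. The reason the theorem singles out $m(0)$ rather than $m(a)$ for arbitrary $a$ is precisely so that the model density $t^{n-1}$ integrates to $r^n/n$ from the origin; you should recognise this and drop the endpoint analysis entirely.
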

\begin{proof}
 Denote by $\nu$ the measure $t^{n-1}dt$ on $[0,\infty)$. For $k\leq -1/2^{p-1}$, applying Theorem \ref{int_comp} we can estimate
\begin{gather*}
 \V(\{u\leq k\})\leq -2 \int_{\{u\leq k\}}  u^{(p-1)} d\V \leq\\
\leq -2 C \int_{\{w\leq k\}} w^{(p-1)} d\nu\leq 2C\nu(\{w\leq k\}) \ .
\end{gather*}
If we set $k=-1+\epsilon$ for $\epsilon$ small enough, it follows from Lemma \ref{lemma_radepsilon} that there exist constants $C$ and $C'$ such that
\begin{gather*}
\V(B(x_0,r_\epsilon))\leq  \V(\{u\leq k\})\leq \\
\leq2C\nu(\{w\leq -1+\epsilon\})= 2C \nu([0,r_\epsilon])= C' r_\epsilon^n \ .
\end{gather*}

\end{proof}

\begin{cor}\label{cor_max}
\rm{ As a corollary, we get that $\max\{u\}\geq m$. In fact, suppose by contradiction that $\max\{u\}<m$. Then, by the continuous dependence of solutions of ODE \eqref{eq_1dm} on the parameters, there exists $n'>n$ ($n'\in \R$) such that $\max\{u\}\leq m(n')$, i.e., there exists an $n'$ such that the solution $w'$ to the ode
\begin{gather*}
 \begin{cases}
 (p-1) \dot w'^{p-2} \ddot w' - \frac{n'-1}{t} \dot w'^{(p-1)} + \lambda w'^{(p-1)}=0\\
w'(0)=-1 \\ \dot w' (0)=0 
 \end{cases}
\end{gather*}
has a first maximum which is still greater than $\max\{u\}$. By Remark \ref{rem_m}, the gradient estimate $\abs{\nabla u}\leq \dot w'|_{w'^{-1}(u)}$ is still valid and so is also the volume comparison. But this is contradicts the fact that the dimension of the manifold is $n$. In fact one would have that for small $\epsilon$ (which means for $r_\epsilon$ small)  $\V(B(x_0,r_\epsilon))\leq c r_\epsilon^{n'}$. Note that the argument applies even in the case where $M$ has a $C^2$.}
\end{cor}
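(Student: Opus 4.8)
The plan is to argue by contradiction, exploiting the fact that the dimension $n$ enters the gradient comparison and the volume estimates only as the parameter of the one dimensional ODE \eqref{eq_1dm}, and re-enters genuinely only at the very end, through the Euclidean volume growth of small metric balls. So suppose $\max\{u\}<m$, where $m=m(0)=w(b(0))$ is the first maximum of the $n$-dimensional model $w$ issuing from $a=0$. Since the solutions of \eqref{eq_1dm} depend continuously on all their parameters (Theorem \ref{teo_1dcont}; continuity in $n$ is transparent from the Pr\"ufer equations \eqref{eq_pf}, which also yield the oscillation, hence a well-defined first critical point, for any real parameter $>1$), the map $n'\mapsto m(n')$ is continuous; since $m(n)=m>\max\{u\}$, I would pick a real $n'>n$ with $m(n')>\max\{u\}$ still. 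Let $w'$ be the solution of \eqref{eq_1dm} with $-\tfrac{n-1}{t}$ replaced by $-\tfrac{n'-1}{t}$ and $a=0$, and let $b'$ be the first point where $\dot w'$ vanishes; then $[\min\{u\},\max\{u\}]\subset[-1,w'(b')]$.

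The next step is to re-run the gradient comparison with $w'$ in place of $w$. By Corollary \ref{cor_m} the Bochner-type inequality of Lemma \ref{lemma_n} holds with $n'\geq n$ substituted for $n$, so, exactly as recorded in Remark \ref{rem_m}, the proof of Theorem \ref{grad_est} carries over and gives
\begin{gather*}
 \abs{\nabla u(x)}\leq \dot w'|_{w'^{-1}(u(x))}\qquad\text{for all }x\in M \ ;
\end{gather*}
in the case of convex boundary the argument is unchanged, since only the nonstrict convexity of $\partial M$ was used.

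Then I would propagate this estimate through the volume machinery of this section, all of whose ingredients use only the gradient comparison and the monotonicity of $\dot w'$ near $0$. With $g'\equiv w'^{-1}\circ u$ and the associated pushforward measure $m'$ on $[0,b']$, Theorems \ref{teo_comp} and \ref{int_comp} and Lemma \ref{lemma_radepsilon} hold for $w'$; in particular, for $\epsilon$ small, $u^{-1}[-1,-1+\epsilon)$ contains the ball $B(x_0,r_\epsilon)$ with $r_\epsilon=w'^{-1}(-1+\epsilon)$. Repeating the proof of the volume comparison, but now against the reference measure $\nu'=t^{n'-1}\,dt$, produces a constant $C'$ with
\begin{gather*}
 \V(B(x_0,r_\epsilon))\leq 2C\,\nu'(\{w'\leq -1+\epsilon\})=2C\,\nu'([0,r_\epsilon])=C'\,r_\epsilon^{\,n'} \ .
\end{gather*}
On the other hand $M$ is a smooth $n$-dimensional manifold (possibly with $C^2$ boundary), so $\V(B(x_0,r))\geq c'\,r^n$ for some $c'>0$ and all small $r$. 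Since $r_\epsilon\to 0$ as $\epsilon\to 0$ and $n'>n$, the two bounds force $c'\leq C'\,r_\epsilon^{\,n'-n}\to 0$, which is absurd; hence $\max\{u\}\geq m$.

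I do not expect a serious obstacle: the argument is essentially a pigeonhole on the dimension parameter, built on the already-isolated fact (Corollary \ref{cor_m}, Remark \ref{rem_m}) that $n$ may be replaced by any larger real number throughout the gradient comparison and the volume estimates. The two points that do deserve care are (i) the continuous dependence of $m(n')$ on the \emph{real} parameter $n'$, so that $n'$ may be chosen close enough to $n$ to keep $m(n')>\max\{u\}$, and (ii) tracking the exponent in the final volume bound so as to be sure it is the ODE parameter $n'$, and not $n$, that appears — for this is exactly what collides with the true dimension and closes the contradiction.
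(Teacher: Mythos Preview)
Your argument is correct and follows essentially the same route as the paper's own proof: assume $\max\{u\}<m$, use continuous dependence in the real parameter $n'$ to pick $n'>n$ with $m(n')>\max\{u\}$, invoke Remark \ref{rem_m} to carry the gradient comparison and the volume machinery through with $w'$ and the reference measure $t^{\,n'-1}\,dt$, and obtain $\V(B(x_0,r_\epsilon))\leq C'\,r_\epsilon^{\,n'}$, which contradicts the $n$-dimensional lower volume growth of small balls. Your write-up is in fact more explicit than the paper's (you spell out the lower bound $\V(B(x_0,r))\geq c'\,r^n$ and the limit $r_\epsilon^{\,n'-n}\to 0$ that closes the contradiction), but the strategy and the ingredients are the same.
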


\section{Sharp estimate}
Now we are ready to state and prove the main theorem.
\begin{teo}\label{teo_main}
 Let $M$ be a compact Riemannian manifold with nonnegative Ricci curvature, diameter $d$ and possibly with convex boundary. Let $\lambda_p$ be the first nontrivial (=nonzero) eigenvalue of the $p$-Laplacian (with Neumann boundary condition if necessary). Then the following sharp estimate holds:
\begin{gather*}
 \frac{\lambda_p}{p-1} \geq \frac{\pi_p^p}{d^p} \ .
\end{gather*}
Moreover a necessary (but not sufficient) condition for equality to hold is that $\max\{u\}=-\min\{u\}$.
\end{teo}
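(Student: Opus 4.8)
The strategy is to combine the gradient comparison theorem with the two quantitative properties of the one-dimensional model proved above: the comparison of maxima, $\max\{u\}\geq m(0)$ (Corollary \ref{cor_max}), and the lower bound on the width of the model, $\delta(a)\geq \pi_p/\alpha$ (Corollary \ref{cor_delta}), where $\alpha=(\lambda/(p-1))^{1/p}$. First I would normalize: since \eqref{eq_plap} is homogeneous of degree $p-1$ in $u$ and invariant under $u\mapsto -u$, we may rescale and possibly change sign so that $\min\{u\}=-1$ and $0<\max\{u\}\leq 1$. By Corollary \ref{cor_max} we then have $m(0)\leq \max\{u\}\leq 1$. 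Since the function $a\mapsto m(a)=w(b(a))$ extends continuously to $[0,\infty]$ with $m(\infty)=1$ (Theorem \ref{teo_delta_comp}), the intermediate value theorem yields a parameter $\bar a\in[0,\infty]$ with $m(\bar a)=\max\{u\}$. From now on $w$ denotes the corresponding solution of \eqref{eq_1dm}, so that $[\min\{u\},\max\{u\}]=[-1,w(b(\bar a))]$ and the hypotheses of the gradient comparison theorem (Theorem \ref{grad_est}, or its boundary analogue) are satisfied; hence $\abs{\nabla u(x)}\leq \dot w|_{w^{-1}(u(x))}$ on all of $M$.

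The next step is the classical change-of-variables argument along a geodesic. Pick points $x_-,x_+$ where $u$ attains $-1$ and $\max\{u\}$, let $\gamma\colon[0,L]\to M$ be a unit-speed minimizing geodesic from $x_-$ to $x_+$ (so $L\leq d$), and set $f(t)=u(\gamma(t))$ and $g(t)=w^{-1}(f(t))$, which runs from $g(0)=\bar a$ to $g(L)=b(\bar a)$. Using $(w^{-1})'=1/(\dot w\circ w^{-1})$ and the gradient estimate, the bound
\begin{gather*}
\abs{\dot g(t)}=\frac{\abs{\dot f(t)}}{\dot w|_{w^{-1}(f(t))}}\leq \frac{\abs{\nabla u(\gamma(t))}}{\dot w|_{w^{-1}(f(t))}}\leq 1
\end{gather*}
holds, so integrating gives $\delta(\bar a)=b(\bar a)-\bar a=g(L)-g(0)\leq L\leq d$.

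Combining this with Corollary \ref{cor_delta}, namely $\delta(\bar a)\geq \pi_p/\alpha$, we obtain $d\geq \pi_p/\alpha$, which is exactly $\lambda/(p-1)\geq \pi_p^p/d^p$. For the equality case, note that $\lambda/(p-1)=\pi_p^p/d^p$ means $d=\pi_p/\alpha$, and then $\pi_p/\alpha\leq\delta(\bar a)\leq d=\pi_p/\alpha$ forces $\delta(\bar a)=\pi_p/\alpha$; by Corollary \ref{cor_delta} this happens only for $\bar a=\infty$, equivalently $\max\{u\}=m(\infty)=1$, so that $\max\{u\}=-\min\{u\}$, which after undoing the normalization is the claimed necessary condition. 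The boundary case uses the Neumann version of the gradient comparison and the fact, noted in Corollary \ref{cor_max}, that the maximum comparison also holds there.

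The point that needs the most care is the endpoint behaviour of $g=w^{-1}\circ f$: because $\dot w$ vanishes at $\bar a$ and $b(\bar a)$, the map $w^{-1}$ is not Lipschitz at $\pm1$ and $\max\{u\}$, so the bound $\abs{\dot g}\leq 1$ and the identity $g(L)-g(0)=\int_0^L\dot g\,dt$ must be justified by an absolute-continuity argument, exactly as in Lemma \ref{lemma_radepsilon} (one uses that $\dot f=0$ wherever $f$ reaches its extreme values, these being global extrema of $u$). Everything else is a direct assembly of results already proved; the only other thing to verify is that $\bar a$ exists, i.e.\ that $\max\{u\}\in[m(0),1]$, which is precisely Corollary \ref{cor_max} together with $m(\infty)=1$.
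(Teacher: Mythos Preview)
Your proof is correct and follows essentially the same route as the paper: normalize, use Corollary~\ref{cor_max} together with continuity of $m(a)$ to pick a model $w$ with matching range, apply the gradient comparison along a minimizing geodesic between a min and a max of $u$, and conclude via $\delta(\bar a)\geq\pi_p/\alpha$. The only cosmetic difference is that the paper phrases the change-of-variables step as $\int_I dt\geq\int_{-1}^k dy/\dot f(f^{-1}(y))\geq\int_{-1}^k dy/\dot w(w^{-1}(y))=\delta(a)$ over the set $I=\{\dot f\geq 0\}$, whereas you bound $|\dot g|\leq 1$ for $g=w^{-1}\circ f$ directly; your explicit acknowledgment of the endpoint/absolute-continuity issue is in fact more careful than the paper's somewhat informal integral chain.
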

\begin{proof}
 First of all, we rescale $u$ in such a way that $\min\{u\}=-1$ and $0<\max\{u\}=k\leq 1$. Given a solution to the differential equation \eqref{eq_1dm}, let $m(a)\equiv w(b(a))$ the first maximum of $w$ after $a$. We know that this function is a continuous function on $[0,\infty)$, and
\begin{gather*}
 \lim_{a\to \infty} m(a) =1 \ .
\end{gather*}
By Corollary \ref{cor_max}, $k\geq m(0)$. This means that for every eigenfunction $u$, there exists $a$ such that $m(a)=k$. If $k=1$, then $k=m(\infty)$.

We can rephrase this statement as follows: for any eigenfunction $u$, there exists a model function $w$ such that $\min\{u\}=\min\{w\}=-1$ and $0<\max\{u\}=\max\{w\}=k\leq 1$. Once this statement is proved, the eigenvalue estimate follows easily. In fact, consider a minimum point $x$ and a maximum point $y$ for the function $u$, and consider a unit speed minimizing geodesic (of length $l\leq d$) joining $x$ and $y$. Let $f(t)\equiv u(\gamma(t))$, and consider the subset $I$ of $[0,l]$ with $\dot f \geq0$. Then changing variables we get
 \begin{gather*}
  d\geq \int_0 ^l dt \geq \int_I dt \geq \int_{-1}^k \frac{dy}{\dot f (f^{-1}(y))}\geq\int_{-1}^k \frac{dy}{\dot w (w^{-1}(y))}=\\
=\int_a^{b(a)} 1 dt = \delta(a)\geq \frac{\pi_p}{\alpha} \ ,
 \end{gather*}
where the last inequality is proved in Corollary \ref{cor_delta}. This yields to
\begin{gather*}
 \frac{\lambda}{p-1}\geq \frac{\pi_p^p}{d^p} \ .
\end{gather*}
Note that by Corollary \ref{cor_delta}, for any $a$, $\delta(a)\geq \frac{\pi_p}{\alpha}$ and equality holds only if $a=\infty$, i.e. only if $\max\{u\}=-\min\{u\}=\max\{w\}=-\min\{w\}$.
\end{proof}

\begin{rem}\label{rem_=}\rm{
 Note that $\max\{u\}=\max\{w\}$ is essential to get a sharp estimate, and it is the most difficult point to achieve. Analyzing the proof of the estimate in \cite{hui} with the tools developed in this article, it is easy to realize that in some sense the only model function used in \cite{hui} is $\sinp(\alpha x)$, which leads to $\phi(u)=\frac{\lambda}{p-1}(1-\abs u^p)$. Since the maximum of this model function is $1$, which in general is not equal to $\max\{u\}$, the last change of variables in the proof does not hold. Nevertheless $\max\{u\}>0$, and so one can estimate that
\begin{gather*}
 d\geq \int_{-1}^k \frac{dy}{\dot w (w^{-1}(y))} > \int_{-1}^0 \frac{dy}{\dot w (w^{-1}(y))}= \int_{-1}^0 \frac{dy}{\alpha (1-y^p)^{1/p}}= \frac{\pi_p}{2 \alpha} \ ,
\end{gather*}
which leads to
\begin{gather*}
 \frac{\lambda}{p-1}>\ton{\frac{\pi_p}{2d}}^p \ .
\end{gather*}

}
\end{rem}

\section{Characterization of equality}
In this section we characterize the equality in the estimate just obtained, and prove that equality can be achieved only if $M$ is either a one dimensional circle or a segment.

In \cite{Hang}, this characterization is proved for $p=2$ to answer an open problem raised by T. Sakai in \cite{saka}. Unfortunately, this proof relies on the properties of the Hessian of a 2-eigenfunction, which are not easily generalized for generic $p$. In particular, any 2-eigenfunction is smooth everywhere and at a minimum point its Hessian is positive semidefinite. Moreover, $H(v,v)$ is the second derivative of the function $f(t)=u(\gamma(t))$ where $\gamma$ is the geodesic such that $v=\dot \gamma$. All these properties are essential for the proof given in \cite{Hang} to work, and the lack of them forces us to choose another way to prove the characterization.

Before we prove the characterization theorem, we need the following lemma, which is similar in spirit to \cite[Lemma 1]{Hang}.
\begin{lemma}\label{lemma_=}
 Assume that all the assumptions of Theorem \ref{teo_main} are satisfied and assume also that equality holds in the sharp estimate. Then we can rescale $u$ in such a way that $-\min\{u\}=\max\{u\}=1$, and in this case $e^p=\abs{\nabla u}^p+\frac \lambda {p-1}\abs{u}^p$ is constant on the whole manifold $M$, in particular $e^p=\frac \lambda {p-1}$. Moreover, all integral curves of the vector field $X\equiv \frac{\nabla u}{\abs {\nabla u}}$ are minimizing geodesics on the open set $E\equiv\{\nabla u\neq 0\}=\{u\neq \pm1\}$ and for all geodesics $\gamma$, $\ps{\dot \gamma}{\frac{\nabla u}{\abs{\nabla u}}}$ is constant on each connected component of $\gamma^{-1}(E)$.
\end{lemma}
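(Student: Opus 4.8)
The plan is to turn the equality hypothesis into rigidity for the gradient comparison theorem and then to spread that rigidity across $M$; concretely:

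\emph{Step 1: normalisation and the sharp gradient bound.} By the last assertion of Theorem \ref{teo_main}, equality forces $\max\{u\}=-\min\{u\}$, so after rescaling we may take $\min\{u\}=-1$, $\max\{u\}=1$. Reading the proof of Theorem \ref{teo_main} for such a $u$, the relevant model function $w$ must be the translation invariant one (the case $T=0$, $a=\infty$): by Corollary \ref{cor_delta} one has $\delta(a)=\pi_p/\alpha$ only when $a=\infty$. This $w$ is a $\sinp$-type function with first integral $\dot w^p+\alpha^p\abs w^p\equiv\alpha^p$, $\alpha^p=\lambda/(p-1)$, so $\dot w|_{w^{-1}(s)}^p=\alpha^p(1-\abs s^p)$ and Theorem \ref{grad_est} becomes
\[
 e^p:=\abs{\nabla u}^p+\frac{\lambda}{p-1}\abs u^p\ \le\ \frac{\lambda}{p-1}\qquad\text{on }M .
\]

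\emph{Step 2: rigidity — $e^p$ is constant.} In the equality case the chain of inequalities closing the proof of Theorem \ref{teo_main} must be a chain of equalities along a unit speed minimizing geodesic $\gamma_0$ from a minimum to a maximum of $u$; this yields that $\gamma_0$ has length $d$, that $f:=u\circ\gamma_0$ is monotone, and that $\dot f=\abs{\nabla u}=\dot w|_{w^{-1}(f)}$ along $\gamma_0$, hence $\dot\gamma_0=\nabla u/\abs{\nabla u}$ and $e^p\equiv\lambda/(p-1)$ along $\gamma_0$. Therefore the continuous function $F:=e^p-\lambda/(p-1)\le0$ attains its maximum value $0$ at interior points of the open set $E=\{\nabla u\neq0\}$, on which $u$ is smooth and $P^{II}_u$ is elliptic. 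Redoing the computation in the proof of Theorem \ref{grad_est} at a general point (not only where $\nabla F=0$) shows that on $E$ the function $F$ satisfies a linear elliptic inequality $P^{II}_u(F)+\ps{b}{\nabla F}+c\,F\ge0$ with locally bounded coefficients; the strong maximum principle then gives $F\equiv0$ on the connected component of $E$ met by $\gamma_0$. Since $F=0$ already on $M\setminus E=\{u=\pm1\}$ (there $\nabla u=0$ and $\abs u=1$), and since these level sets have empty interior (a nonconstant $p$-eigenfunction is not locally constant), a covering argument — passing through a min-to-max geodesic inside each component of $E$, to which the equality case of the diameter estimate again applies — yields $F\equiv0$ on all of $M$. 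I expect this globalisation step, where $E$ need not a priori be connected, to be the main obstacle.

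\emph{Step 3: pinched Hessian.} Granting $e^p\equiv\lambda/(p-1)$, differentiate and contract with $\nabla u$ to get $\abs{\nabla u}^{p-2}A_u=-\frac{\lambda}{p-1}u^{(p-1)}$, whence $A_u^2=\frac{\lambda^2\abs u^{2p-2}}{(p-1)^2\abs{\nabla u}^{2(p-2)}}$. Expanding $P^{II}_u(e^p)=P^{II}_u(\abs{\nabla u}^p)+P^{II}_u(\frac{\lambda}{p-1}\abs u^p)$ by the $p$-Bochner formula and by Lemma \ref{lemma_pu} respectively, and using $\Delta_p u=-\lambda u^{(p-1)}$, the above value of $A_u$, and that $P^{II}_u$ kills the constant $e^p$, the identity collapses after a short computation to
\[
 \abs{H_u}^2+\Ric(\nabla u,\nabla u)=A_u^2 .
\]
As $\Ric\ge0$ and $\abs{H_u}^2\ge A_u^2$ always, both must be equalities: $\Ric(\nabla u,\nabla u)=0$ and $\abs{H_u}^2=A_u^2$, so on $E$ the Hessian $H_u$ has rank at most one, with $\nabla u$ its only eigendirection.

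\emph{Step 4: integral curves and angles.} For $X=\nabla u/\abs{\nabla u}$ on $E$, the rank-one Hessian forces $\nabla_XX\parallel X$, and $\abs X\equiv1$ then gives $\nabla_XX=0$: the integral curves of $X$ are geodesics. From $e^p\equiv\lambda/(p-1)$ we also get $\{\nabla u\neq0\}=\{u\neq\pm1\}$, so each maximal integral curve of $X$ runs between a point with $u=-1$ and one with $u=1$; for any such pair $x',y'$ the gradient bound gives $d(x',y')\ge\int_{-1}^1\frac{ds}{\alpha(1-\abs s^p)^{1/p}}=\pi_p/\alpha=d$, while $e^p\equiv\lambda/(p-1)$ makes the curve itself have length exactly $\pi_p/\alpha$; hence it realizes $d(x',y')$ and is minimizing. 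Finally, for an arbitrary geodesic $\gamma$, differentiating $\ps{\dot\gamma}{X}$ along $\gamma$ and inserting the expression for $\nabla X$ coming from the rank-one Hessian together with $\nabla\abs{\nabla u}\parallel\nabla u$ makes the derivative vanish on $\gamma^{-1}(E)$, so $\ps{\dot\gamma}{X}$ is constant on each connected component of $\gamma^{-1}(E)$.
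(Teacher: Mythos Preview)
Your outline is correct and follows essentially the same route as the paper: sharp gradient bound from the $T=0$ model, equality along a min--max geodesic, strong maximum principle on $E$, then Hessian rigidity and the geodesic consequences. Two points where the paper is more explicit than your sketch are worth noting.

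For Step~2, rather than ``redoing the computation of Theorem~\ref{grad_est} at a general point'', the paper writes down a specific first-order perturbation $L$ of $P_u$ (adding to $P_u(\phi)$ two first-order terms built from $\nabla e^p$ and the tangential part of $H_u(\nabla\phi)$) and checks the clean identity
\[
\frac{L(e^p)}{p\abs{\nabla u}^{2p-4}}=\Bigl(\abs{H_u}^2-\frac{\abs{H_u(\nabla u)}^2}{\abs{\nabla u}^2}\Bigr)+\Ric(\nabla u,\nabla u)\ge 0,
\]
which gives the subsolution property and, after the maximum principle, the equality $\abs{H_u}^2=\abs{H_u(\nabla u)}^2/\abs{\nabla u}^2$ directly. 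Your Step~3, deriving $\abs{H_u}^2+\Ric=A_u^2$ from $P^{II}_u(e^p)=0$ once $e^p$ is known to be constant, is an equivalent (and equally short) way to reach the rank-one Hessian, since $\nabla e^p=0$ already forces $H_u(\nabla u)\parallel\nabla u$ and hence $\abs{H_u(\nabla u)}^2/\abs{\nabla u}^2=A_u^2$.

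For the globalisation you flagged as the main obstacle, the paper's device is: in each putative second component $Z_2$ of $Z=\{u\neq\pm1\}$ pick a point $x_2$ with $u(x_2)=0$, connect it to a point $x_1\in Z_1$ with $u(x_1)=0$ by a minimizing geodesic $\sigma$; since $Z_1\neq Z_2$, $\sigma$ must meet $u^{-1}\{\pm1\}$ at some $\bar t$, and the two half-integrals $\int_0^1 dy/\dot f$ on either side of $\bar t$ each contribute at least $\pi_p/(2\alpha)$, forcing equality throughout and hence $e^p=\lambda/(p-1)$ at a point of $Z_2$. Your ``min-to-max geodesic inside each component'' is not quite the right picture (min and max lie outside $E$), but this zero-to-zero argument does the job.
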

\begin{proof}
Using the model function $w(t)=\sinp(\alpha t)$ in the gradient comparison, we know that 
\begin{gather*}
 \abs{\nabla u}^p\leq \abs{\dot w }|_{w^{-1} (u)}^p= \frac{\lambda}{p-1} (1-\abs{u}^p) \ ,
\end{gather*}
so that $e^p\leq \frac{\lambda}{p-1}$ everywhere on $M$. Let $x$ and $y$ be a minimum and a maximum point of $u$ respectively, and let $\gamma$ be a unit speed minimizing geodesic joining $x$ and $y$. Define $f(t)\equiv u(\gamma(t))$. Following the proof of Theorem \ref{teo_main}, we know that
\begin{gather*}
 d\geq \int_{-1}^1 \frac{ds}{\dot f (f^{-1}(s))}\geq \int_{-1}^1 \frac{ds}{\dot w (w^{-1}(s))} = \frac{\pi_p}{\alpha} \ .
\end{gather*}
By the equality assumption, $\alpha d = \pi_p$, and this forces $\dot f(t) = \dot w |_{w^{-1} f(t)}$. So, up to a translation in the domain of definition, $f(t)=w(t)$, and on the curve $\gamma$ $e^p|_{\gamma}=\frac \lambda{p-1}$.

Now the statement of the lemma is a consequence of the strong maximum principle (see for example \cite[Theorem 3.5 pag 34]{GT}). Indeed, consider the operator
\begin{gather*}
 L(\phi)=P_u(\phi) -\frac{(p-1)^2}{p \abs{\nabla u}^2}\ps{\nabla \ton{\abs{\nabla u}^p -\frac{\lambda}{p-1}u^{p}}}{\nabla \phi}+\\
+(p-2)^2\abs{\nabla u}^{p-4}\pst{\nabla u}{H_u}{\nabla \phi - \frac{\nabla u}{\abs{\nabla u}}\ps{\frac{\nabla u}{\abs{\nabla u}}}{\nabla \phi}} \ .
\end{gather*}
The second order part of this operator is $P^{II}_u$, so it is locally uniformly elliptic in the open set $E\equiv \{\nabla u \neq 0\}$, while the first order part (which plays no role in the maximum principle) is designed in such a way that $L(e^p)\geq 0$ everywhere. In fact, after some calculations we have that
\begin{gather}\label{eq_R}
 \frac {L(e^p)}{p\abs{\nabla u}^{2p-4}} = \ton{\abs{H_u}^2 - \frac{\abs{H_u(\nabla u)}^2}{\abs{\nabla u}^2}}+ \Ric(\nabla u,\nabla u)\geq 0 \ .
\end{gather}
Then by the maximum principle the set $\{e^p=\frac{\lambda}{p-1}\}$ is open and close in $Z\equiv \{u\neq \pm1\}$, so it contains the connected component $Z_1$ containing $\gamma$.

Let $Z_2$ be any other connected component of $Z$ and choose $x_i \in Z_i$ with $u(x_i)=0$ for $i=1,2$. Let $\sigma$ a unit speed minimizing geodesic joining $x_1$ and $x_2$. Necessarily there exists $\bar t$ such that $\sigma(\bar t)\subset u^{-1}\{-1,1\}$, otherwise $Z_1=Z_2$. Without loss of generality, let $u(\bar t)=1$ and define $f(t)\equiv u(\sigma(t))$. Arguing as before we can conclude
\begin{gather*}
 d\geq \int_0^l dt= \int_0^{\bar t} dt + \int_{\bar t }^l dt \geq \int_{I_1}dt +\int_{I_2} dt \geq \\
\int_0^1 \frac{dy}{\dot f (f^{-1}(y))}-\int_{0}^{1} \frac{dy}{\dot f (f^{-1}(y))}\geq 2\int_{0}^1 \frac{dy}{\dot w (w^{-1}(y))} \geq \frac{\pi_p}{\alpha} \ ,
\end{gather*}
where $I_1\subset[0,\bar t]$ is the subset where $\dot f >0$ and $I_2\subset[\bar t ,l]$ is where $\dot f<0$. The equality assumption forces $\alpha d =\pi_p$ and so $\dot f(f^{-1}(t))=\dot w (w^{-1}(t))$ a.e. on $[0,\bar t]$ and $\dot f(f^{-1}(t))=-\dot w (w^{-1}(t))$ a.e. on $[\bar t,l]$, which implies that, up to a translation in the domain of definition,  $f(t)=w(t)=\sinp(\alpha t)$. This proves that for any connected component $Z_2$, there exists a point inside $Z_2$ where $e^p=\frac{\lambda}{p-1}$, and by the maximum principle $e^p=\frac{\lambda}{p-1}$ on all $Z$.
This also proves that $E=Z$. Moreover, for equality to hold in \eqref{eq_R}, the Ricci curvature has to be identically equal to zero on $Z$ and
\begin{gather}\label{eq_H}
 \abs{H_u}^2 =\frac{\abs{H_u(\nabla u)}^2}{\abs{\nabla u}^2} \ .
\end{gather}
Now the fact that $e^p$ is constant implies by differentiation that where $\nabla u\neq0$, i.e. on $Z$, we have
\begin{gather*}
 \abs{\nabla u}^{p-2} H_u(\nabla u) = - \frac{\lambda}{p-1}u^{p-1} \nabla u \ ,
\end{gather*}
and so
\begin{gather*}
 \abs{\nabla u }^{p-2} \pst{X}{H_u}{X}= - \frac{\lambda}{p-1}u^{p-1} \ .
\end{gather*}
This and equation \eqref{eq_H} imply that on $Z$
\begin{gather}\label{eq_HH}
\abs{\nabla u}^{p-2} H_u = - \frac{\lambda}{p-1} u^{p-1} X^\star \otimes X^\star \ .
\end{gather}
Now a simple calculation shows that
\begin{gather*}
 \nabla_X X = \frac 1 {\abs{\nabla u}}\nabla_{\nabla u} \frac{\nabla u }{\abs{\nabla u}}= \frac{1}{\abs{\nabla u}} (H_u (X)- H_u(X,X)X)=0 \ .
\end{gather*}
Which proves that integral curves of $X$ are geodesics. The minimizing property follows from an adaptation of the proof of Theorem \ref{teo_main}.

As for the last statement, we have
\begin{gather*}
 \frac d {dt} \ps{\dot \gamma}{\frac{\nabla u}{\abs{\nabla u}}} = \frac{1}{\abs{\nabla u}} H_u(\dot \gamma,\dot \gamma)-\ps{\dot \gamma}{\nabla u} \frac{1}{\abs{\nabla u}^3} \pst{\nabla u}{H_u}{\dot \gamma}
\end{gather*}
and, by equation \eqref{eq_HH}, the right hand side is equal to $0$ where $\nabla u\neq 0$.

\end{proof}

As promised, now we are ready to state and prove the characterization.
\begin{teo}\label{teo_=}
 Let $M$ be a compact Riemannian manifold with $\Ric \geq 0$ and diameter $d$ such that
\begin{gather}\label{eq_=}
 \frac{\lambda_p}{p-1} = \ton{\frac{\pi_p}{d}}^p \ .
\end{gather}
If $M$ has no boundary, then it is a one dimensional circle; if $M$ has boundary then it is a one dimensional segment.
\end{teo}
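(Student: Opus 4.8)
The plan is to feed the rigid structure obtained in Lemma \ref{lemma_=} into a product decomposition of $M$ and then show that decomposition is incompatible with $\dim M\geq 2$. After rescaling so that $\min u=-1$, $\max u=1$, recall from Lemma \ref{lemma_=} that $e^p=\abs{\nabla u}^p+\frac{\lambda}{p-1}\abs{u}^p\equiv\frac{\lambda}{p-1}$ on $M$, that $\Ric\equiv 0$ on $E=\{\nabla u\neq 0\}=\{u\neq\pm 1\}$, that the integral curves of $X=\nabla u/\abs{\nabla u}$ are minimizing geodesics on $E$, and that on $E$ one has $\abs{\nabla u}^{p-2}H_u=-\frac{\lambda}{p-1}u^{(p-1)}X^\star\otimes X^\star$. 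I would introduce the function $t$ determined by $u=\sinp(\alpha t)$ with $\alpha t\in[-\pi_p/2,\pi_p/2]$, so $t$ ranges over $[-d/2,d/2]$ since $\pi_p/\alpha=d$. Using $e^p$ constant and the identity $\abs{\sinp}^p+\abs{\cosp}^p=1$ one gets $\abs{\nabla t}\equiv 1$ on $E$; and since $H_t=\dot g(u)H_u+\ddot g(u)\,du\otimes du$ is, by the displayed Hessian formula, a scalar multiple of $X^\star\otimes X^\star$, while $\abs{\nabla t}\equiv 1$ forces $H_t(\nabla t,\cdot)=0$, we conclude $H_t\equiv 0$ on $E$. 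Thus $X=\nabla t$ is a parallel (hence Killing) unit field on $E$, each connected component $E_0$ of $E$ is isometric to a Riemannian product $\Sigma\times(-d/2,d/2)$ with $\Sigma=E_0\cap\{t=0\}$, the level sets $\{u=c\}$ are totally geodesic and pairwise isometric under the flow, and $\dim M=\dim\Sigma+1$. Finally, each integral curve of $X$ is arc‑length parametrized of finite length $d$, so it extends continuously to $[-d/2,d/2]$, producing for every $x\in\Sigma$ a minimizing geodesic $c_x\colon[-d/2,d/2]\to M$ of length exactly $d$ from a point $\mu(x)\in\{u=-1\}$ to a point $m(x)\in\{u=1\}$.

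The heart of the proof is to show $\Sigma$ is a single point. Fix $x\neq x'$ in $\Sigma$ and, for small $\epsilon>0$, set $p_\epsilon=c_x(-\tfrac d2+\epsilon)$ and $q_\epsilon=c_{x'}(\tfrac d2-\epsilon)$. Since $\abs{\nabla t}\leq 1$ on $M$, any path from $p_\epsilon$ to $q_\epsilon$ that meets $\{u=1\}$ or $\{u=-1\}$ has length at least $d$ (bound the total variation of $t$), whereas a path staying inside $E\cong\Sigma\times(-d/2,d/2)$ has length at least $\sqrt{\operatorname{dist}_\Sigma(x,x')^2+(d-2\epsilon)^2}>d$ once $\epsilon$ is small; hence $\operatorname{dist}_M(p_\epsilon,q_\epsilon)=d$ and no minimizing geodesic between them stays in $E$. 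Examining such a geodesic, it consists of two legs, each with length equal to the variation of $t$ along it, so each leg is an integral curve of $\pm X$; running them back to the flow lines $c_x$ and $c_{x'}$ forces $m(x)=m(x')$ or $\mu(x)=\mu(x')$. Thus $m(x)=m(x')$ or $\mu(x)=\mu(x')$ for all $x,x'\in\Sigma$. A short Helly‑type argument finishes: writing $\Sigma=m^{-1}(m(x_0))\cup\mu^{-1}(\mu(x_0))$ for a fixed $x_0$, one checks these two sets cannot each omit a point of the other, so one of them is all of $\Sigma$; say $\mu\equiv\mu_0$ is constant (the case $m$ constant is symmetric).

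With $\mu\equiv\mu_0$, every point of $E_0$ flows down to $\mu_0$, and since $\{u=\pm1\}$ has empty interior (else $\Delta_p u\equiv 0\neq -\lambda u^{(p-1)}$ there), the minimum set reduces to $\{\mu_0\}$; consequently $\rho\equiv\operatorname{dist}(\mu_0,\cdot)=t+d/2$ on $M$, a function which on the punctured neighbourhood $E$ of $\mu_0$ is smooth with $\abs{\nabla\rho}\equiv 1$ and $H_\rho=H_t\equiv 0$. But the Hessian of the distance function from a point, restricted to the directions orthogonal to the radial one, is asymptotic to $\rho^{-1}\mathrm{Id}$ as $\rho\to 0$, hence nonzero near $\mu_0$ as soon as $\dim M\geq 2$. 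This contradiction forces $\dim M=\dim\Sigma+1=1$, and a compact $1$‑manifold is a circle if $\partial M=\emptyset$ and a segment otherwise; the boundary case of every step above is handled as in the boundary versions of the gradient comparison and of Lemma \ref{lemma_=}, using the nonstrict convexity of $\partial M$.

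I expect the genuine obstacle to be the passage through the singular sets $\{u=\pm 1\}$, where $u$ is only $C^{1,\alpha}$ and $\nabla u$ vanishes: one must control minimizing geodesics that leave $E$ through those sets in order to make the dichotomy "$m$ or $\mu$ constant" rigorous, and one must rule out that $E$ is disconnected or that $\{u=-1\}$ is a positive‑dimensional submanifold (the latter handled by an analogous argument with tubular neighbourhoods, whose shape operators again blow up like the reciprocal of the radius, forcing the submanifold to be all of $M$, which is excluded). By contrast, the concluding Jacobi‑field/shape‑operator contradiction is routine once $\rho=t+\text{const}$ is in hand.
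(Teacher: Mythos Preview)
Your argument is essentially correct and follows a genuinely different route from the paper's own proof, so a brief comparison is in order.

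The paper proceeds by studying $N=u^{-1}(0)$ and splits into two cases. If $N$ is disconnected, the constancy of $\ps{\dot\gamma}{X}$ along geodesics quickly forces $\dim M=1$. If $N$ is connected, the paper builds the normal exponential map $h\colon N\times[-d/2,d/2]\to M$ and shows it is a \emph{global} Riemannian isometry; the delicate part is proving injectivity and surjectivity of $h$ at the endpoints $\pm d/2$, where $u=\pm 1$ and the vector field $X$ degenerates. Once $M\cong N\times[-d/2,d/2]$, the Pythagorean identity $\operatorname{diam}(M)^2=\operatorname{diam}(N)^2+d^2$ forces $\operatorname{diam}(N)=0$.

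You bypass both the case distinction and the global extension of the isometry. Working only on one component $E_0\cong\Sigma\times(-d/2,d/2)$, your diameter–geodesic dichotomy (paths in the product are too long, paths through $\{u=\pm1\}$ have length $\geq d$, hence a length-$d$ minimizer must hit $\{u=\pm1\}$ at a single point and split into two $X$-flow segments) yields $m(x)=m(x')$ or $\mu(x)=\mu(x')$ for each pair, and the Helly step makes one of them constant. The contradiction then comes from comparing $H_\rho=H_t=0$ on $E_0$ with the standard $\rho^{-1}$ asymptotics of the Hessian of a point-distance function. This is more local: you never need to glue across $\{u=\pm1\}$ or to know the global structure of the singular sets.

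Two remarks on your write-up. First, the sentence ``since $\{u=\pm1\}$ has empty interior\ldots, the minimum set reduces to $\{\mu_0\}$'' is not justified and in fact unnecessary: all you need is $\rho=t+d/2$ on $E_0$, and that follows directly from $\mu\equiv\mu_0$ on $\Sigma$ together with the $1$-Lipschitz bound on $t$; since $E_0$ is open and accumulates at $\mu_0$, the Hessian contradiction already fires inside $E_0\cap B_r(\mu_0)$. Second, the contingency plan you sketch for a positive-dimensional $\{u=-1\}$ (``tubular neighbourhoods, whose shape operators again blow up like the reciprocal of the radius'') would not work as stated, because the tubular shape operator does \emph{not} blow up in directions tangent to the submanifold; fortunately, by the previous remark you never need it. So your ``genuine obstacle'' is less of an obstacle than you feared: the argument is already complete without controlling the connectedness of $E$ or the dimension of the extremal sets.
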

\begin{proof}

We will prove the theorem studying the connected components of the set $N=u^{-1}(0)$, which, according to Lemma \ref{lemma_=}, is a regular submanifold. We divide our study in two cases, and we will show that in both cases $M$ must be a one dimensional manifold (with or without boundary).
\paragraph{\textsc{Case 1, $N$ has more than one component}}Suppose that $N$ has more than one connected component. Let $x$ and $y$ be in two different components of $N$ and let $\gamma$ be a unit speed minimizing geodesic joining them.  Since $\ps{\dot \gamma}{\frac{\nabla u}{\abs{\nabla u}}}$ is constant on $E$, either $\gamma(t)=0$ for all $t$, which is impossible since by assumption $x$ and $y$ belong to two different components, or $\gamma$ must pass through a maximum or a minimum. Since the length of $\gamma$ is less than or equal to the diameter, we can conclude as in the previous lemma that $\gamma(t)=\pm\sinp(t)$ on $[0,d]$. This in particular implies that $\ps{\dot \gamma}{\nabla u}=\pm\abs{\nabla u}$ at $t=0$, and since only two tangent vectors have this property, there can be only two points $y=\exp (x,\dot \gamma,d)$. Therefore the connected components of $N$ are discrete, and the manifold $M$ is one dimensional.

\paragraph{\textsc{Case 2, $N$ has only one component}} Now suppose that $N$ has only one connected component. Set $I=[-d/2,d/2]$ and define the function $h:N\times I \to M$ by
\begin{gather*}
 h(y,s)= \exp(y,X,s).
\end{gather*}
We will show that this function is a diffeomorphism and metric isometry.

First of all, let $\tilde h$ be the restriction of the map $h$ to $N\times I^\circ$ and note that if $\abs s < d/2$, by Lemma \ref{lemma_=}, $\tilde h(y,s)$ is the flux of the vector field $X$ emanating from $y$ evaluated at time $s$. Now it is easy to see that $u(h(y,s))=w(s)$ for all $s$. This is certainly true for all $y$ if $s=0$. For the other cases, fix $y$, let $\gamma(s)\equiv \tilde h(y,s)$ and $f(s)\equiv u(\tilde h(y,s))$. Then $f$ satisfies
\begin{gather*}
\dot \gamma = \frac{\nabla u}{\abs{\nabla u}} \ , \\
 \dot f = \ps{\nabla u}{\dot \gamma}= \abs {\nabla u}|_{\tilde h(y,s)} = \dot w |_{w^{-1} f(s)} \ .
\end{gather*}
Since $\dot w |_{w^{-1}(x)}$ is a smooth function, the solution of the above differential equation is unique and so $u(\tilde h(y,s))=f(s)=w(s)$ for all $y$ and $\abs s < d/2$. Note that by the continuity of $u$, we can also conclude that $u(h(y,s))=w(s)$ on all of $N\times I$.

The function $\tilde h$ is injective, in fact if $\tilde h(y,s)=\tilde h(z,t)$, then $w(s)=u(\tilde h(y,s))=u(\tilde h(z,t))=w(t)$ implies $s=t$. Moreover, since the flux of a vector at a fixed time in injective, also $y=z$.

Now we prove that $\tilde h$ is also a Riemannian isometry on its image. Let $\ps{\cdot}{\cdot}$ be the metric on $M$, $\ps{\cdot}{\cdot}_N$ the induced metric on $N$ and $\pps{\cdot}{\cdot}$ the product metric on $N\times I$. We want to show that $\pps{\cdot}{\cdot}=\tilde h^\star \ps{\cdot}{\cdot}$. The proof is similar in spirit to \cite[Lemma 9.7, step 7]{7i}. It is easily seen that
\begin{gather*}
 \tilde h^\star \ps{\cdot}{\cdot} (\partial s,\partial s)= \ps{\frac{\nabla u}{\abs{\nabla u}}}{\frac{\nabla u}{\abs{\nabla u}}}=1 \ ,
\end{gather*}
and for every $V\in T_{(y,s)} N$ we have
\begin{gather}\label{eq_perp}
\tilde  h^\star \psp (\partial s,V)=\ps{\frac{\nabla u}{\abs{\nabla u}}}{d\tilde h (V)}=0 \ .
\end{gather}
In fact, since $\abs{\nabla u}$ is constant on all level sets of $u$, if $\sigma(t)$ is a curve with image in $N\times \{s\}$ and with $\sigma(0)=(y,s)$ and $\dot \sigma (0)=V$, then
\begin{gather*}
 \frac{d}{dt} \ton{\abs{\nabla u} \circ \tilde h} \sigma(t)=0 \ .
\end{gather*}
Recalling that $H_u=-\frac{\lambda}{p-1} u^{(p-1)} \abs{\nabla u}^{p-2} \frac{\nabla u}{\abs{\nabla u}}\otimes \frac{\nabla u}{\abs{\nabla u}}$, we also have
\begin{gather*}
 \frac{d}{dt} \abs{\nabla u} \circ \ton{\tilde h( \sigma(t))}= -\frac{\lambda}{p-1} u^{(p-1)} \abs{\nabla u}^{p-2} \ps{\frac{\nabla u}{\abs{\nabla u}}}{d\tilde h(V)} \ ,
\end{gather*}
and the claim follows.

Fix any $V,W \in TN$, for any $\abs t <d/2$ note that, by the properties of the Lie derivative, we have
\begin{gather*}
  \left.\frac d {ds}\right\vert_{s=t} (\tilde h^\star \psp)  (V,W)=\L _{\partial s} [d\tilde h \psp] (V,W)=[d\tilde h \L_{X}\psp](V,W)=\\
= \ps{\nabla_{d\tilde h(V)} \frac{\nabla u}{\abs{\nabla u}} }{d\tilde h(W)} + \ps{dh(V)}{\nabla_{d\tilde h(W)} \frac{\nabla u}{\abs{\nabla u}}} \ .
\end{gather*}
It is easy to see that $d\tilde h(W) (\abs{\nabla u})=d\tilde h(V) (\abs{\nabla u})=0$ since $\abs{\nabla u}$ is constant on the level sets, and therefore
\begin{gather*}
\left.\frac d {ds}\right\vert_{s=t} (h^\star \psp)  (V,W) = 2 H_u (d\tilde h(V),d\tilde h(W))=0 \ .
\end{gather*}
This implies that for every $y\in N$ fixed and any $V,W\in TN$, $\tilde h^\star|_{y,s} \psp (V,W)$ is constant on $(-d/2,d/2)$, and since $\tilde h$ is a Riemannian isometry by definition on the set $N\times \{0\}$, we have proved that $h^\star\psp = \ppsp$.

Now, $h$ is certainly a differentiable map being defined as an exponential map, and it is the unique differentiable extension of $\tilde h$.


Injectivity and surjectivity for $h$ are a little tricky to prove, in fact consider the length space $N\times I/\sim$, where $(y,s)\sim(z,t)$ if and only if $s=t=\pm d/2$, endowed with the length metric induced by $\tilde h$. It is still possible to define $h$ as the continuous extension of $\tilde h$, and $N\times I/\sim$ is a length space of diameter $d$, but evidently $h$ is not injective. This shows that injectivity of $h$ has to be linked to some Riemannian property of the manifold $M$.

\paragraph{\textbf{$h$ is surjective}}For any point $x\in M$ such that $u(x)\neq \pm1$ the flux of the vector field $X$ joins $x$ with a point on the surface $N$ and vice versa, so $h$ is surjective on the set $E$. The set of points $u^{-1}(1)$ (and in a similar way the set $u^{-1}(-1)$) has empty interior since $u$ is an eigenfunction with positive eigenvalue. Fix any $x\in u^{-1}(1)$. The estimate $\abs{\nabla u}^p+\frac{\lambda}{p-1} \abs u ^p\leq \frac{\lambda}{p-1}$ implies that any geodesic ball $B_\epsilon(x)$ contains a point $x_\epsilon$ with $w(\pi_p/2-\epsilon)<u(x_\epsilon)<1$. Let $\gamma(t)$ be the flux of $X$ emanating from $x_\epsilon$. By the property of $X$, the curve $\gamma$ intersects $N$ in a single point $y_\epsilon$, moreover there exists a unique $z_\epsilon\in u^{-1}(1)$ which is an accumulation point for $\gamma$. If we define
\begin{gather*}
 \gamma(t)= \exp(y_\epsilon,X,t)=h(y_\epsilon,t)\\
f(t)= u(\gamma(t)) \ , 
\end{gather*}
we know that $\gamma$ is a minimizing geodesic on $[-d/2,d/2]$ and that $f(t)=w(t)=\sin_p(\alpha t)$. Since $u(x_\epsilon)>w(\pi_p/2-\epsilon)$
\begin{gather*}
 d(x_\epsilon,z_\epsilon)<w^{-1}(1)-   w^{-1}(w(\pi_p/2-\epsilon))=\epsilon \ .
\end{gather*}
Let $\epsilon$ go to zero and take a convergent subsequence of $\{y_\epsilon\}$ with limit $y$, then by continuity of the exponential map $h(y,d/2)=x$. Since $x$ was arbitrary, surjectivity is proved.

\paragraph{\textbf{$h$ is injective}}
Now we turn to the injectivity of $h$. Since $h$ is differentiable and its differential has determinant $1$ in $N\times I^\circ$, its determinant is $1$ everywhere and $h$ is a local diffeomorphism. By a similar density argument, it is also a local Riemannian isometry.
%
%
%
%
%
%
%
By the product structure on $N\times I$, we know that the parallel transport along a piecewise smooth curve $\sigma$ of the vector $X\equiv dh(\partial_s)$ is independent of $\sigma$. In particular if $\sigma$ is a loop, the parallel transport of $X$ along $\sigma$ is $\tau_\sigma(X)=X$.

Now consider two points $y,z\in N$ without any restriction on their mutual distance such that $h(y,d/2)=h(z,d/2)=x$. Let $\sigma$ be the curve obtained by gluing the geodesic $h(y,d/2-t)$ with any curve joining $x$ and $y$ in $M$ and with the geodesic $h(z,t)$. $\sigma$ is a loop around $x$ with
\begin{gather*}
 dh|_{(y,d/2)} \partial _s=X=\tau_\sigma(X)=dh|_{(z,d/2)} \partial _s \ .
\end{gather*}
Since by definition of $h$, $y=\exp(x,X,-d/2)$ and $z=\exp(x,\tau_\sigma X, -d/2)$, the equality $X=\tau_\sigma(X)$ implies $y=z$, and this proves the injectivity of $h$.

Now it is easily seen that $h$ is a metric isometry between $N\times I$ and $M$, which means that the diameter of $M$ is $d=\sqrt{d^2+diam(N)^2}$. Note that $diam(N)=0$ implies that $M$ is one dimensional (as in the case when $N$ has more than one connected component), and it is well-known that the only 1-dimensional connected compact manifolds are circles and segments.

As seen in Section \ref{sec_1d}, both these kind of manifolds realize equality in the sharp estimate for any diameter $d$, and so we have obtained our characterization.
\end{proof}

\subsection*{Acknowledgments}I especially thank Aaron (Moore Instructor @ MIT (USA)) since it's under his supervision and advice that this work started, and my advisor prof. Alberto Giulio Setti, Università dell'Insubria (EU),  for the useful insights he gave me. I would also like to thank prof. Stefano Pigola, Michele Rimoldi and Giona Veronelli for the interesting conversation we had on the problem.

After the first version of this article appeared on the web and was sent for publication, I also had some very interesting exchange with prof. Luca Esposito, prof. Carlo Nitsch and prof. Cristina Trombetti from the University Federico II of Naples (EU). I thank them for the suggestions they made and for their kindness. They published recently a very interesting article on sharp Poincarè constant estimates, which can be found at \cite{carlo}.

I also thank Dr. Songting Yin for pointing out the regularity issue explained in Remark \ref{rem_reg}.

\bibliographystyle{aomalpha}
\bibliography{problem_arxiv}
\end{document}